\documentclass[12pt]{amsart}
\usepackage{latexsym}
\usepackage{enumitem}
\usepackage{graphicx}
\usepackage{subfig}
\usepackage{float}
\usepackage{relsize}
\usepackage[top=3.2cm,bottom=3.8cm,left=3cm,right=2cm]{geometry}
\usepackage{mathrsfs}
\usepackage{amssymb}
\usepackage{amsbsy}
\usepackage{amsmath}
\usepackage{CJK}
\usepackage{tikz}
\usepackage[colorlinks,linkcolor=blue,citecolor=blue,pagebackref]{hyperref}
\textwidth=16 true cm \textheight=22 true cm \pagestyle{plain}

\overfullrule=0pt
\oddsidemargin=0pt
\evensidemargin=0pt
\topmargin=0pt
\renewcommand {\thefootnote}{\fnsymbol{footnote}}

\newtheorem{theorem}{Theorem}
\newtheorem{thm}{Theorem}[section]
\newtheorem{lem}[thm]{Lemma}
\newtheorem{cor}[thm]{Corollary}

\newtheorem{rem}[thm]{Remark}
\newtheorem{problem}{Problem}

\theoremstyle{definition}
\newtheorem{de}[thm]{Definition}

\def\r{\mathbb{R}^n}
\def\s{\mathbb{S}^{n-1}}

\def\R{\mathcal{R}}

\def\conv{\text{\rm conv}\thinspace}

\allowdisplaybreaks[4]

\numberwithin{equation}{section}



\newcommand\nnfootnote[1]{%
	\begin{NoHyper}
		\renewcommand\thefootnote{}\footnote{#1}%
		\addtocounter{footnote}{-1}%
	\end{NoHyper}
}

\makeatletter

\SetLabelAlign{hang}{%
  #1%
  \aftergroup\adjustparshapeindent}
\newcommand*\adjustparshapeindent{%
  \@ifnextchar\egroup
    {\aftergroup\adjustparshapeindent}
    {\adjustparshapeindent@auxi}}
\newcommand*\adjustparshapeindent@auxi{%
  \unless\ifdim\wd\@tempboxa=\labelwidth
    \adjustparshapeindent@auxii
  \fi}
\newcommand*\adjustparshapeindent@auxii{%
  \dimen@ = \dimexpr\wd\@tempboxa-\labelwidth\relax
  \labelwidth = \wd\@tempboxa
  \advance\linewidth -\dimen@
  \advance\leftmargin \dimen@
  \advance\@totalleftmargin \dimen@
  \parshape \@ne \@totalleftmargin \linewidth}

\makeatother

\begin{document}
	
\begin{center}
	{\Large \bf Fixed and periodic points of the intersection body operators of lower orders}
\end{center}

\vskip 10pt

\begin{center}
	{\bf Cheng\ Lin\ \  \ \ \ \ \ Ge\ Xiong}\\~~ \\
		\small{School of Mathematical Sciences, Key Laboratory of Intelligent Computing and Applications (Ministry of Education), Tongji University, Shanghai, 200092, China}
\end{center}

\vskip 5pt

   \nnfootnote{E-mail addresses: 1. lcbruce@foxmail.com;\ 2. xiongge@tongji.edu.cn.}
\nnfootnote{Research of the authors was supported by NSFC No. 12271407.}

\begin{center}
	\begin{minipage}{15cm}
{\bf Abstract:}  For the intersection body
  operator of lower order $I_iK$ of a star body $K$ in $\r$, $i\in\{1, 2,\ldots, n-2\}$, we prove that $I_i^2K = cK$ iff $K$ is an origin-symmetric ball, and hence $I_iK = cK$ iff $K$ is an origin-symmetric ball. Combining the  recent breakthrough (case $i = n-1$) \cite{MSY}  of Milman, Shabelman and Yehudayoff,   slight modifications of two long-standing questions posed by R. Gardner \cite[Problems {\color{blue}8.6} and {\color{blue}8.7}]{1995} are completely solved.  As applications, we show that for the spherical Radon transform $\R$,  a
non-negative $\rho\in L^{\infty}(\s)$ satisfies $\R(\rho^i) = c\rho$ for some $c>0$ iff $\rho$ is constant. Also, the sharp Busemann intersection type \mbox{inequalities are established.}

\vskip 3pt{{\bf 2020 Mathematics Subject Classification:} 52A40, 52A30, 52A38, 42B15.}
		
\vskip 3pt{{\bf Keywords:} Intersection body of order $i$, dual mixed volume, continuous Steiner symmetrization, spherical Radon transform, }
		\end{minipage}
\end{center}
\begin{CJK*}{UTF8}{gbsn}
\vskip 20pt
\section{\bf Introduction}
\label{1}
\vskip 10pt

A Borel set $K$ in $\r$ is called \emph{star-shaped}, if $K = \{ru : r \in [0,\rho_K(u)],\thinspace u \in\s\}$ for some Borel function $\rho_K : \s \to \mathbb{R}_+$ called its radial function, where $\s$
denotes the Euclidean unit sphere in $\r$ and $\mathbb{R}_+=[0,\infty)$. The star-shaped set $K$ is called a \emph{star body}, if $\rho_K$ is positive and continuous. A star body $K$ in $\r$ is called a \emph{Lipschitz star body}, if $\rho_K$ is Lipschitz continuous.

The \emph{intersection body} $IK$ of a star body $K$ in $\r$ was introduced and studied by
Lutwak in \cite{Lutwak2}, who defined $IK$ as the star body given by
\begin{align*}
    \rho_{IK}(u)=V_{n-1}(K\cap u^{\perp})=\frac{1}{n-1}\int_{\s\cap u^{\perp}} \rho_K^{n-1}(\theta)\thinspace d\theta,\quad \forall\thinspace u\in \s.
\end{align*}
Throughout this article,  $V_k(\cdot)$ denotes the $k$-dimensional Hausdorff measure $\mathcal{H}^k(\cdot)$.

Intersection bodies play an essential role  in the dual Brunn-Minkowski theory and in Geometric Tomography, particularly in relation to
the solution of the celebrated Busemann-Petty problem. Please refer to, e.g., \cite{BP,Gardner, Gardner1, GKS, Koldobsky2, Lutwak2, Zhang3} and \cite[Chapter 8]{1995} for details.

In 1994,  Zhang \cite{Zhang1} generalized the notion of intersection body $IK$ to the \emph{intersection body of order $i$}, $I_i K$, $i\in\{1, 2,\ldots, n-1\}$,  defined as the star body given by
\begin{align*}
    \rho_{I_i K}(u)= \widetilde{V}_{i}(K\cap u^{\perp})=\frac{1}{n-1}\int_{\s\cap u^{\perp}} \rho^{i}_K(\theta)\thinspace d\theta,\quad \forall\thinspace u\in \s.
\end{align*}
Here, $\widetilde{V}_i(K\cap u^{\perp})$ is the \emph{$i$-th dual volume} of $K\cap u^{\perp}$ in $\mathbb{R}^{n-1}$. See Section \ref{2} for its definition. It is clear that $I_{n-1}K=IK$ and $I_i K$ is an intersection body of a star body. The generalized \emph{Funk section theorem} \cite[Theorem 7.2.6]{1995}, with $k=n-1$, shows that the map $I_i$, when restricted to the class of origin-symmetric star bodies, is \emph{injective}. For more information on $I_iK$,  refer to, e.g., \cite{ Hadwiger, Koldobsky1, RZ, TX, Zhang1}.

In the excellent book \cite{1995} authored by Gardner, the following problems are posed.
\begin{problem}[{\cite[Problem 8.6]{1995}}]
\label{8.6}
    Suppose that $1\leq i\leq n-1$. Which star bodies $K$ in $\r$ are such that $I_i^2 K$ is homothetic to $K$?
\end{problem}
\begin{problem}[{\cite[Problem 8.7]{1995}}]
\label{8.7}
    Suppose that $1\leq i\leq n-1$. Which star bodies $K$ in $\r$ are such that $I_i K$ is homothetic to $K$?
\end{problem}

 Grinberg and Zhang \cite[Corollary 9.8]{GZ} proved that when $n \geq  3$, if $I_1K = cK$ for some $c > 0$, then $K$ is an origin-symmetric ball. In 2011, Fish, Nazarov, Ryabogin and Zvavitch \cite[Theorem 1]{FNRZ} proved that if $K$ is a star body in $\r$ sufficiently close to  the Euclidean unit ball $B_n$ in the Banach-Mazur metric, then $I^m K\to B_n$ as $m\to\infty$. For such $K$, if $I^mK=cK$ for some $c > 0$ and an integer $m>0$, then $K$ is an origin-symmetric ellipsoid.

Very recently, Milman, Shabelman and Yehudayoff \cite{MSY} have made a \emph{breakthrough} on the above two long-standing questions (case $i=n-1$) and  completely solved the Open problem 12.8 posed by Lutwak \cite{Lutwak3}.

\begin{theorem}[{\cite{MSY}}]
\label{A}
    Let $K$ be a star body in $\r$ and $n\geq 3$. Then $I^2K = cK$ for some $c > 0$ if and only if $K$ is an origin-symmetric ellipsoid, and therefore $IK = cK$ for some $c > 0$ if and only if $K$ is an origin-symmetric ball.
\end{theorem}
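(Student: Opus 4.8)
\emph{Strategy.} I would peel the equivalence into a one-line easy implication, a short list of reductions, and a single hard rigidity statement, and then attack the latter by linearizing $I^{2}$ at the ball and upgrading the resulting \emph{local} rigidity to a global one. Because $u^{\perp}=(-u)^{\perp}$, the body $IL$ is origin-symmetric for every star body $L$, so $I^{2}K=cK$ already forces $K=c^{-1}I^{2}K$ to be origin-symmetric. For an ellipsoid $E=TB_{n}$ with $T$ positive definite, the classical hyperplane-section formula gives $\rho_{IE}(u)=V_{n-1}(E\cap u^{\perp})=\omega_{n-1}\,\abs{\det T}\,\abs{Tu}^{-1}$, and comparison with $\rho_{TB_{n}}(u)=\abs{T^{-1}u}^{-1}$ shows $IE=\omega_{n-1}\abs{\det T}\,T^{-1}B_{n}$ is again a centered ellipsoid; iterating, $I^{2}E=\omega_{n-1}^{\,n}\abs{\det T}^{\,n-2}E$. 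This proves ``centered ellipsoid $\Rightarrow I^{2}K=cK$'', and it also shows that $IE=cE$ forces $T^{-1}$ to be a positive multiple of $T$, i.e.\ $T$ scalar, i.e.\ $E$ a ball (the reverse direction being immediate from the same formula). Finally $IK=cK$ implies $I^{2}K=I(cK)=c^{n-1}IK=c^{n}K$, so the second equivalence in Theorem~A follows from the first together with the previous sentence. It therefore remains to prove: \emph{if $K$ is an origin-symmetric star body with $I^{2}K=cK$ for some $c>0$, then $K$ is an ellipsoid.}

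\emph{Regularity and linearization at the ball.} Writing $f=\rho_{K}^{\,n-1}$, the hypothesis reads $\R\bigl((\R f)^{\,n-1}\bigr)=c(n-1)^{n}f^{1/(n-1)}$. Since $\rho_{K}$ is pinched between positive constants and the spherical Radon transform $\R$ is smoothing (each application gains roughly $(n-2)/2$ derivatives, so two applications per iteration gain one full derivative), a bootstrap on this identity, using fractional Leibniz estimates to push past the Sobolev threshold and then the fact that high-order Sobolev spaces on $\s$ are algebras, upgrades $\rho_{K}$ to $C^{\infty}(\s)$; all that follows may thus be carried out in $C^{m,\alpha}(\s)$ with $m$ as large as needed. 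Normalizing the scale, say $V_{n}(K)=V_{n}(B_{n})$, the condition $I^{2}K=cK$ becomes a genuine fixed-point equation $\mathcal{F}(K)=K$ for a smooth self-map $\mathcal{F}$ of a $C^{m,\alpha}$-neighborhood of $B_{n}$ in the normalized slice. Expanding $\rho_{K}=1+\varepsilon\varphi$ with $\varphi$ even and of vanishing mean, one gets from $\rho_{IK}=\tfrac{1}{n-1}\R\rho_{K}^{\,n-1}$ that the relative perturbation of $IK$ is $L\varphi:=\tfrac{n-1}{\lambda_{0}}\R\varphi$, where $\lambda_{0}=\mathcal{H}^{n-2}(\mathbb{S}^{n-2})$; hence $D\mathcal{F}|_{B_{n}}=L^{2}$ on the space of even functions of vanishing mean.

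\emph{Spectral computation and local rigidity.} On degree-$k$ spherical harmonics one has $\R Y_{k}=\lambda_{k}Y_{k}$ with $\lambda_{2m+1}=0$, $\lambda_{2}/\lambda_{0}=-1/(n-1)$, and $\abs{\lambda_{2m+2}/\lambda_{2m}}=(2m+1)/(n+2m-1)$, so for $n\geq3$ the sequence $\bigl(\abs{\lambda_{2m}/\lambda_{0}}\bigr)_{m\geq1}$ is strictly decreasing. Thus $L$ is diagonal with $L_{2}=-1$ and $\abs{L_{2m}}<1$ for $m\geq2$, and $D\mathcal{F}|_{B_{n}}=L^{2}$ has spectrum $\{1\}\cup\bigl[0,(3/(n+1))^{2}\bigr]$, the eigenvalue $1$ occurring exactly on the space $\mathcal{H}_{2}$ of degree-$2$ harmonics. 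Since $\rho_{(\mathrm{Id}+\varepsilon S)B_{n}}(u)=1+\varepsilon\langle Su,u\rangle+O(\varepsilon^{2})$ and $\langle Su,u\rangle\!\restriction_{\s}$ (for $S$ symmetric and traceless) ranges over all of $\mathcal{H}_{2}$, this neutral eigenspace is precisely the tangent space at $B_{n}$ to the $\tfrac{(n-1)(n+2)}{2}$-dimensional manifold of centered ellipsoids of the normalized volume. A Lyapunov--Schmidt reduction on $\mathcal{H}_{2}$ (legitimate because $1$ is isolated from the rest of the spectrum and $L^{2}$ is a strict contraction on $\mathcal{H}_{2}^{\perp}$) expresses the $\mathcal{H}_{2}^{\perp}$-component of any nearby fixed point as a function $\eta(\xi)$ of its $\mathcal{H}_{2}$-component $\xi$, with $\eta(0)=0$ and $\eta'(0)=0$; substituting leaves a bifurcation map on a neighborhood of $0$ in $\mathcal{H}_{2}$ whose zeros are the fixed points. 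But the centered ellipsoids already furnish a $\dim\mathcal{H}_{2}$-parameter family of such fixed points through $B_{n}$, which forces the bifurcation map to vanish identically; hence the fixed-point set of $\mathcal{F}$ in a neighborhood of $B_{n}$ is exactly the centered ellipsoids there. Consequently, any origin-symmetric $K$ with $I^{2}K=cK$ that is close enough to a ball is an ellipsoid.

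\emph{From local to global: the crux.} What remains is to remove the smallness assumption, and this is the decisive step --- indeed the reason Theorem~A is a breakthrough rather than a routine perturbation. The mechanism available for $I_{i}$ with $i\leq n-2$ is unavailable here: there the degree-$2$ eigenvalue of the linearized $I_{i}^{2}$ equals $i^{2}/(n-1)^{2}<1$, so the ball is the \emph{only} local fixed point and one can finish by continuous Steiner symmetrization together with a Busemann-type inequality whose equality case is the ball; for $i=n-1$ the entire ellipsoid manifold is neutral, so symmetrizing $K$ toward a ball cannot be the correct move. The route I would take is a \emph{global} refinement of \cite{FNRZ}: show that for \emph{every} origin-symmetric star body $L$, the iterates obtained by repeatedly applying $I^{2}$, rescaling to unit volume, and restoring an isotropic (John) position by an $SL(n)$-map converge in $C^{m,\alpha}$ to a centered ellipsoid; a fixed point of $I^{2}$ is then its own limit, hence an ellipsoid. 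This requires a strict Lyapunov functional for the position-normalized $I^{2}$-iteration --- a scale- and $SL(n)$-invariant quantity built from the dual volumes $\widetilde{V}_{j}(\cdot)$ and from the Busemann ratio $V_{n}(IL)/V_{n}(L)^{n-1}$ --- which is monotone under $I$ and strictly so off the ellipsoid orbit, with sharp equality exactly at centered ellipsoids. Identifying this functional, proving its monotonicity, and --- hardest of all --- pinning down its equality case for \emph{general, not necessarily convex,} star bodies is the heart of the matter, and is precisely where \cite{MSY} makes its contribution; the sharp Busemann intersection type inequalities advertised in the abstract are presumably the quantitative engine behind such an argument.
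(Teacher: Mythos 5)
Your elementary reductions, regularity bootstrap, and local analysis at the ball are all correct: the ellipsoid computation $I(TB_n)=\omega_{n-1}\lvert\det T\rvert\,T^{-1}B_n$ and hence $I^2E=\omega_{n-1}^n\lvert\det T\rvert^{n-2}E$, the reduction of $IK=cK$ to $I^2K=cK$, the smoothing bootstrap via the Radon transform (this is essentially Theorem~A.1 of \cite{MSY}, adapted here as Theorem~\ref{regularity}), and the spectral computation showing the linearization at $B_n$ has eigenvalue $1$ exactly on the degree-$2$ spherical harmonics. The Lyapunov--Schmidt reduction then yields local rigidity near the ball, and you are right that all of this is the easy part.

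The genuine gap is precisely the one you flag: the passage from local to global rigidity. Your proposed mechanism --- constructing a strict, $SL(n)$-invariant Lyapunov functional for the position-normalized $I^2$-iteration and proving global convergence to a centered ellipsoid --- is offered only as a program; you do not construct the functional, prove its monotonicity, or settle its equality case, so as written the proof does not establish the theorem. Moreover, the speculation about what \cite{MSY} actually do is off target: there is no Lyapunov functional or dynamical-convergence argument in their proof. They instead identify $I^2K=cK$ as the Euler--Lagrange equation of $\mathcal{F}_c(K)=V_n(IK)-c(n-1)V_n(K)$, and test stationarity against a specific class of \emph{admissible radial perturbations}, namely continuous Steiner symmetrization $\{S^t_uK\}_{t\in[0,1]}$, which preserves $V_n$. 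The resulting first-order condition $\frac{d}{dt}\big|_{t=0^+}V_n(I(S^t_uK))=0$ is analyzed through a Blaschke--Petkantschin reformulation of $V_n(IK)$ as a singular integral over $n$-tuples of points, and the equality case is extracted via a key lemma on linear functions together with a local Bertrand--Brunn characterization of ellipsoids. Your remark that ``symmetrizing $K$ toward a ball cannot be the correct move'' for $i=n-1$ misreads this strategy: no one expects $K$ to coincide with its symmetral; what is exploited is the \emph{infinitesimal} stationarity under a volume-preserving deformation, which is entirely consistent with the ellipsoid manifold being a neutral direction at the ball --- the Bertrand--Brunn step is exactly what upgrades this infinitesimal information to the global ellipsoid conclusion. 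So the crux is genuinely missing, and the guessed route for filling it does not match the actual proof.
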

Inspired by the novel idea and aided with the ingenious technique developed in \cite{MSY}, we solve the remaining cases of Problems 8.6 and 8.7  with  more \emph{natural} assumptions.

\begin{thm}
\label{periodic}
    Let $K$ be a star body in $\r$, $n\geq 3$ and $i\in\{1,2,\ldots,n-2\}$. Then $I_i^2K = cK$ for some $c > 0$ if and only if $K$ is an origin-symmetric ball, and therefore $I_iK = cK$ for some $c > 0$ if and only if $K$ is an origin-symmetric ball.
\end{thm}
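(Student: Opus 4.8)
\medskip

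\noindent\textbf{Proof proposal.} The plan is to adapt the continuous Steiner symmetrization argument of \cite{MSY} (which settled the case $i=n-1$) to lower orders, and then add a rigidity lemma, specific to $i\le n-2$, that replaces ``ellipsoid'' by ``ball''; the mechanism behind this last step is that, unlike $I=I_{n-1}$, the operator $I_i$ with $i\le n-2$ is homogeneous of degree $i$ and only $O(n)$-covariant, never $GL(n)$-covariant. Two reductions are immediate. Since $u^{\perp}=(-u)^{\perp}$, the function $\rho_{I_iK}=\frac1{n-1}\R(\rho_K^i)$ is even on $\s$; hence $\rho_{I_i^2K}$ is even, and $I_i^2K=cK$ forces $\rho_K$ to be even, i.e.\ $K$ is origin-symmetric. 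Moreover the second assertion follows from the first, since $I_iK=cK$ gives $I_i^2K=I_i(cK)=c^iI_iK=c^{i+1}K$; and the converse is a one-line computation, since for $K=rB_n$ one has $\rho_{I_iK}=\frac{|\mathbb{S}^{n-2}|}{n-1}\,r^i$, a constant, so that $I_iK$ and $I_i^2K$ are origin-symmetric balls. It remains to prove that an origin-symmetric star body $K$ with $I_i^2K=cK$ is a ball.

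I would run the continuous Steiner symmetrization machinery of \cite{MSY} using the homothety- and $O(n)$-invariant functional
\[
\Phi_i(K)=\frac{\widetilde{V}_n(I_iK)}{\widetilde{V}_{ni/(n-1)}(K)^{n-1}},\qquad \widetilde{V}_p(K):=\frac1n\int_{\s}\rho_K^{p}\,d\theta,
\]
the exponent being forced by $I_i(\lambda K)=\lambda^iI_iK$; note that $\Phi_i(K)=\Phi_{n-1}(\widetilde K)$, where $\rho_{\widetilde K}=\rho_K^{i/(n-1)}$ and $I_iK=I\widetilde K$, so $\Phi_{n-1}$ is the intersection-body volume ratio of \cite{MSY}. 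Symmetrizing $K$ in a direction $\theta$ and tracking how the central sections $K_t\cap u^{\perp}$, their $i$-th dual volumes $\widetilde{V}_i(K_t\cap u^{\perp})=\rho_{I_iK_t}(u)$, and hence $\widetilde{V}_n(I_iK_t)$, evolve with $t$ (this uses that $K_t\cap u^{\perp}$ is itself a continuous Steiner symmetral inside $u^{\perp}$ when $\theta\in u^{\perp}$, and, for transversal $\theta$, quantitative convexity of section volumes together with a dual H\"older inequality in the $u$-integration), one sees that $\Phi_i$ is monotone along the symmetrization. Combining this with $I_i^2K=cK$ (applied to $K$ and to $L:=I_iK$, the latter satisfying $I_i^2L=c^iL$) and with homothety-invariance of $\Phi_i$ to sandwich the values, one concludes that $K$ and $L$ are centered ellipsoids. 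I expect this symmetrization step to be the main obstacle: importing the delicate estimates of \cite{MSY} while carrying the nonlinear power $\rho_K^i$ through them — so as to control the $i$-th dual volumes of the sections rather than the $(n-1)$-th as in \cite{MSY}, which is exactly what pins down the exponent $ni/(n-1)$ — and the attendant equality analysis.

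Finally, the rigidity lemma: \emph{if $i\le n-2$ and $E$ is a centered ellipsoid with $I_iE$ again an ellipsoid, then $E$ is a ball}; applied with $E=K$ (so $I_iE=L$), this completes the proof. The lemma is elementary. Since $\R$ is injective on even continuous functions and, classically, $I$ maps the family of centered ellipsoids onto itself up to dilations, writing $I_iE$ as a dilate of $IE''$ gives that $\R(\rho_E^i)$ is proportional to $\R(\rho_{E''}^{n-1})$, hence $\rho_E^i$ is proportional to $\rho_{E''}^{n-1}$, i.e.\ $\langle Au,u\rangle$ is proportional to $\langle Bu,u\rangle^{(n-1)/i}$ on $\s$, where $E=\{x:\langle Ax,x\rangle\le1\}$ and $E''=\{x:\langle Bx,x\rangle\le1\}$. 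Diagonalizing $B$ forces $A$ to be diagonal in the same orthonormal basis (compare the two sides at $(e_j\pm e_k)/\sqrt2$), and the identity collapses to $\sum_j\mu_j^{\alpha}t_j=\big(\sum_j\mu_jt_j\big)^{\alpha}$ for every probability vector $t=(t_1,\dots,t_n)$, where $\alpha=(n-1)/i>1$; strict convexity of $x\mapsto x^{\alpha}$ then forces $\mu_1=\dots=\mu_n$, so that $E''$, and hence $E$, is a ball. (When $i=n-1$, $\alpha=1$ and this gives no constraint, consistent with Theorem \ref{A}.)
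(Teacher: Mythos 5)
Your two reductions are fine and partly overlap with the paper's own proof (origin-symmetry of $K$ via evenness of $\rho_{I_i K}$; the passage from $I_iK=cK$ to $I_i^2K=c^{i+1}K$ is exactly the paper's Step 5). Your rigidity lemma at the end is also correct as a standalone statement: diagonalizing forces $\sum_j\mu_j^\alpha t_j=(\sum_j\mu_j t_j)^\alpha$ on the simplex with $\alpha=(n-1)/i>1$, and strict convexity pins $\mu_1=\dots=\mu_n$. But that lemma is never needed in the paper, and the step that would feed it -- ``$K$ and $L$ are centered ellipsoids'' -- is where the proposal breaks down.

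The central gap is that $I_i^2K=cK$ is \emph{not} the Euler--Lagrange equation of your functional $\Phi_i(K)=\widetilde V_n(I_iK)/\widetilde V_{ni/(n-1)}(K)^{n-1}$. Differentiating the numerator along a radial perturbation $f$ gives
\begin{align*}
\frac{d}{dt}\widetilde V_n(I_iK_t)\Big|_{0^+}
&=\frac{i}{n-1}\int_{\s}\rho_{I_iK}^{\,n-1}\,\R(\rho_K^{i-1}f)\,du
= i\int_{\s}\rho_{I(I_iK)}\,\rho_K^{i-1}f\,du,
\end{align*}
so the stationarity condition for $\Phi_i$ involves the \emph{mixed} operator $I\circ I_i$, not $I_i\circ I_i$. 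The hypothesis $I_i^2K=cK$ therefore gives no stationarity of $\Phi_i$, and the sandwich argument does not get off the ground. A second, related, obstacle is one you flag yourself: CSS of $K$ does not preserve the denominator $\widetilde V_{ni/(n-1)}(K)$ when $i<n-1$ (for a convex $K$ not symmetric about $u^{\perp}$, the paper even shows $\widetilde V_{i+1}(S_u^tK)$ is \emph{strictly increasing} in $t$), and CSS does not commute with taking a power of the radial function, so ``CSS of $K$'' and ``CSS of $\widetilde K$'' are genuinely different flows; importing the $i=n-1$ monotonicity for $\widetilde K$ says nothing directly about $K$.

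The paper handles both issues at once. It works with the functional $\mathcal F_{c,i}(K)=\widetilde V_{i+1}(I_iK)-ci\,\widetilde V_{i+1}(K)$, whose Euler--Lagrange equation really is $I_i^2K=cK$ (Theorem \ref{Euler}), and it perturbs not by $S_u^tK$ but by the power-body family $K_t=\langle(S_u^t\langle K^{(i+1)/n}\rangle)^{n/(i+1)}\rangle$, which preserves $\widetilde V_{i+1}(K_t)$ exactly (Lemma \ref{again admissible}). The Blaschke--Petkantschin reformulation with $q=i+1\le n-1$ (Theorem \ref{reformulation}) then produces radial weights $|x_j|^{1-n/(i+1)}$, whose level sets $H_{\boldsymbol y}(z)$ are origin-symmetric boxes; intersecting with these extra origin-symmetric constraints forces every chord $\langle K^{(i+1)/n}\rangle\cap L_u^y$ to be symmetric about $u^{\perp}$, so one lands directly on ``ball'' with no ellipsoid intermediate, no Bertrand--Brunn step, and no need for your rigidity lemma. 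So: keep the reductions and the pretty rigidity observation, but the core CSS argument must be rebuilt around a functional whose variation actually matches $I_i^2$ and a perturbation that fixes the correct dual volume.
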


\begin{rem}
    \label{smooth}
     Theorem \ref{periodic} actually holds under a more general assumption that $K$ is a star-shaped bounded Borel set in $\r$ satisfying $I_i^2K=cK$ or $I_iK = cK$ up to null-sets. Indeed, by Theorem \ref{regularity}, i.e., a simple adaption of the regularity analysis in \cite[Theorem A.1]{MSY}, it is possible to modify $K$ on a null-set so that either $K$ is the one-point set $\{o\}$ or else $K$ is an origin-symmetric ball.
\end{rem}

The above results admit a reformulation in  terms of \emph{non-linear} harmonic analysis. Suppose that $f$ is a Borel function on $\s$. The \emph{spherical Radon (or Funk) transform} $\R(f)$ of $f$ is defined by $\R(f)(u) = \int_{\s\cap u^{\perp}}f(\theta)\thinspace d\theta$, $\forall\thinspace u\in\s$. Hence,
\begin{align*}
    \rho_{I_iK}(u)=\frac{1}{n-1}\int_{\s\cap u^{\perp}}\rho_K^i(\theta)\thinspace d\theta=\frac{1}{n-1} \R(\rho_K^{i})(u),\quad\forall\thinspace u\in\s.
\end{align*}

 From Theorem \ref{periodic} and Remark \ref{smooth}, we obtain the following.
\begin{cor}
\label{functional}
    Let $\rho\in L^{\infty}(\s)$ be non-negative, $n \geq 3$ and $i\in\{1,2,\ldots,n-2\}$. Then as functions in $L^{\infty}(\s)$, $\R(\rho^{i})=c\rho$ for some $c>0$ if and only if $\rho$ is constant.
\end{cor}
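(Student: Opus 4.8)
The plan is to convert the functional equation into a geometric identity for a star-shaped set and then quote Theorem \ref{periodic} in the generality supplied by Remark \ref{smooth}. The ``if'' direction is immediate: if $\rho\equiv a$ is a non-negative constant, then for every $u\in\s$ we have $\R(\rho^{i})(u)=a^{i}\int_{\s\cap u^{\perp}}d\theta=a^{i}s_{n-1}$, where $s_{n-1}:=V_{n-2}(\s\cap u^{\perp})$ is independent of $u$; hence $\R(\rho^{i})=c\rho$ holds with $c=a^{i-1}s_{n-1}>0$ when $a>0$, and both sides vanish when $a=0$. So the content is in the converse.

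For the converse, assume $\rho\in L^{\infty}(\s)$ is non-negative and $\R(\rho^{i})=c\rho$ in $L^{\infty}(\s)$ for some $c>0$. Choose a bounded Borel representative of $\rho$ (for instance, replace $\rho$ by $\min\{\rho,\|\rho\|_{\infty}\}$ on a Borel version), which is legitimate since $\rho\in L^{\infty}(\s)$, and let $K$ be the star-shaped bounded Borel set in $\r$ with radial function $\rho_{K}=\rho$; non-negativity of $\rho$ and $i$ being a positive integer ensure $\rho^{i}\geq 0$. By the definition of $I_{i}$ and Fubini's theorem,
\begin{align*}
    \rho_{I_{i}K}(u)=\frac{1}{n-1}\R(\rho_{K}^{i})(u)=\frac{1}{n-1}\R(\rho^{i})(u)=\frac{c}{n-1}\rho(u)\qquad\text{for a.e. }u\in\s,
\end{align*}
so $I_{i}K=c'K$ up to a null set, with $c'=c/(n-1)>0$ because $n\geq 3$. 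Now Remark \ref{smooth} (which rests on the regularity analysis of Theorem \ref{regularity} and on Theorem \ref{periodic}) forces $K$, after modification on a null set, to be either the one-point set $\{o\}$ or an origin-symmetric ball. In the first case $\rho=0$ a.e.\ and in the second $\rho$ equals a positive constant a.e., so in either case $\rho$ is constant as an element of $L^{\infty}(\s)$, which finishes the proof.

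Since the substantive work is entirely carried by Theorem \ref{periodic} and Remark \ref{smooth}, I expect no genuine obstacle here; the only points demanding care are bookkeeping: that the $L^{\infty}$ hypothesis is exactly what makes $K$ a legitimate star-shaped bounded Borel set so that Remark \ref{smooth} is applicable, that the merely a.e.\ validity of $\R(\rho^{i})=c\rho$ yields the geometric identity $I_{i}K=c'K$ only up to null-sets (which is all Remark \ref{smooth} requires), and that the degenerate solution $\rho\equiv 0$ is not overlooked when stating the equivalence.
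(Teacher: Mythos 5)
Your proof is correct and follows exactly the route the paper intends: translate $\R(\rho^i)=c\rho$ into $I_iK=c'K$ (up to null-sets) for the star-shaped bounded Borel set $K$ with radial function a Borel representative of $\rho$, then invoke Remark \ref{smooth} (which rests on Theorems \ref{regularity} and \ref{periodic}) to conclude $K$ is $\{o\}$ or an origin-symmetric ball. The only bookkeeping points you flag — a.e.\ validity, the role of $L^\infty$, and the degenerate $\rho\equiv 0$ case — are exactly the right ones, and they are handled correctly.
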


A set $K\subseteq \r$ is called a \emph{convex body} in $\r$, if $K$ is a compact convex set with nonempty interior. The classical \emph{Busemann intersection inequality} \cite{Busemann} reads: If $K$ is a convex body  in $\r$ with origin in its interior and $n\geq 3$, then
\begin{align*}
    \frac{V_n(IK)}{V_n(K)^{n-1}}\leq  \frac{V_n(IB_n)}{V_n(B_n)^{n-1}}
\end{align*}
with equality if and only if $K$ is an origin-symmetric ellipsoid.

In this article, the following  Busemann intersection type inequalities for intersection
bodies of lower orders are established.
\begin{thm}
\label{isoperimetric}
    Let $K$ be a Lipschitz star body in $\r$, $n\geq 3$ and \mbox{$i\in \{1,2,\ldots,n-2\}$. Then}
    \begin{align*}
        \frac{\widetilde{V}_{i+1}(I_i K)}{\widetilde{V}_{i+1}(K)^i}\leq \frac{\widetilde{V}_{i+1}(I_i B_n)}{\widetilde{V}_{i+1}(B_n)^i}
    \end{align*}
     with equality if and only if $K$ is an origin-symmetric ball.
\end{thm}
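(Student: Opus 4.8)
The plan is to reduce the inequality, by an elementary change of radial function, to the classical Busemann intersection inequality combined with two applications of the power-mean (i.e.\ H\"older) inequality for dual volumes.

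First, set $\rho_L:=\rho_K^{\,i/(n-1)}$ and let $L$ be the corresponding Lipschitz star body. Since $\rho_L^{\,n-1}=\rho_K^{\,i}$, for every $u\in\s$
\[
\rho_{IL}(u)=\frac{1}{n-1}\int_{\s\cap u^{\perp}}\rho_L^{\,n-1}(\theta)\,d\theta=\frac{1}{n-1}\int_{\s\cap u^{\perp}}\rho_K^{\,i}(\theta)\,d\theta=\rho_{I_iK}(u),
\]
so $I_iK=IL$; moreover $V_n(L)=\frac1n\int_{\s}\rho_L^{\,n}\,du=\frac1n\int_{\s}\rho_K^{\,ni/(n-1)}\,du=\widetilde V_{ni/(n-1)}(K)$, and $\widetilde V_q(B_n)=V_n(B_n)$ for every $q$ (both equal $\frac1n\int_\s1\,du$). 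This is the whole point of the reduction: it turns the fixed exponent $n-1$ of the operator $I$ into a free parameter, at the price of changing the orders of the dual volumes, which the two H\"older steps below then correct.

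Next I would chain three inequalities, all available because $i\in\{1,\dots,n-2\}$ gives both $i+1<n$ and $\frac{ni}{n-1}<i+1$. (i) The power-mean inequality applied to $\rho_{IL}$ on $\s$ with exponents $i+1<n$ yields
\[
\Big(\frac{\widetilde V_{i+1}(IL)}{\widetilde V_{i+1}(B_n)}\Big)^{1/(i+1)}\le\Big(\frac{V_n(IL)}{V_n(B_n)}\Big)^{1/n},
\]
with equality iff $\rho_{IL}$ is constant, i.e.\ $IL$ is a centered ball. (ii) The Busemann intersection inequality for the star body $L$ gives $V_n(IL)\le\dfrac{V_n(IB_n)}{V_n(B_n)^{\,n-1}}\,V_n(L)^{\,n-1}$. (iii) The power-mean inequality applied to $\rho_K$ on $\s$ with exponents $\frac{ni}{n-1}<i+1$, using $\frac1n\int_\s\rho_K^{\,ni/(n-1)}\,du=V_n(L)$, gives
\[
\Big(\frac{V_n(L)}{V_n(B_n)}\Big)^{(n-1)/(ni)}\le\Big(\frac{\widetilde V_{i+1}(K)}{\widetilde V_{i+1}(B_n)}\Big)^{1/(i+1)},
\]
with equality iff $\rho_K$ is constant, i.e.\ $K$ is a centered ball. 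Raising (iii) to the power $n-1$, substituting into (ii), and then feeding the outcome into (i), the exponents telescope to
\[
\frac{\widetilde V_{i+1}(I_iK)}{\widetilde V_{i+1}(B_n)}\le\Big(\frac{V_n(IB_n)}{V_n(B_n)}\Big)^{(i+1)/n}\Big(\frac{\widetilde V_{i+1}(K)}{\widetilde V_{i+1}(B_n)}\Big)^{i}.
\]
Applying this same chain to $K=B_n$ makes every step an equality (then $L=B_n$, $IL=IB_n$ is a centered ball, and $\rho_K\equiv1$), so the constant on the right equals $\widetilde V_{i+1}(I_iB_n)/\widetilde V_{i+1}(B_n)$; substituting it back and rearranging gives exactly the claimed inequality. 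For the equality case: if $K$ is a centered ball then so are $L$ and $IL=I_iK$ and all three steps are equalities; conversely, equality in the chain forces equality in (iii), hence $\rho_K$ constant, hence $K$ a centered ball.

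The only non-trivial ingredient is the Busemann intersection inequality in the star-body setting (the excerpt records only the convex-body version; the star-body form is standard, and reduces to the origin-symmetric case by the convexity of $t\mapsto t^{n/(n-1)}$ applied to $\tfrac12(\rho_L^{\,n-1}(\theta)+\rho_L^{\,n-1}(-\theta))$ — note that its equality characterization is not even used here). So I expect the substantive point to be simply the substitution $I_iK=IL$ with $\rho_L=\rho_K^{\,i/(n-1)}$; once that is in hand the rest is bookkeeping. A more computational alternative — spherically symmetrizing $\rho_K$ on $\s$, which preserves $\widetilde V_{i+1}(K)$ and should not decrease $\widetilde V_{i+1}(I_iK)$, and then handling the rotationally symmetric profile by a one-variable Jensen argument — also looks plausible, but the required rearrangement inequality for the spherical Radon transform is delicate, so I would prefer the argument above, in which the Lipschitz hypothesis plays no essential role.
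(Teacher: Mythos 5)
Your proposal is correct, and it is a genuinely different argument from the paper's. The pivotal observation is that $I_iK = IL$ for $\rho_L = \rho_K^{i/(n-1)}$, which collapses the lower-order operator into the classical $I$; after that substitution the inequality follows by sandwiching the classical Busemann intersection inequality (in its star-body form, Gardner's \emph{Geometric Tomography}, Thm.\ 8.2.11) between two power-mean inequalities, with the constant on the right identified by running the same chain on $K=B_n$. The exponent bookkeeping checks out: steps (i) and (iii) use $i+1<n$ and $\tfrac{ni}{n-1}<i+1$ respectively, both valid for $i\le n-2$, and the telescoped constant $\omega_n^{1-i}\omega_{n-1}^{i+1}$ indeed equals $\widetilde V_{i+1}(I_iB_n)/\widetilde V_{i+1}(B_n)^i$. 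The equality case then follows entirely from strictness of the power-mean inequality in step (iii), so the equality characterization in Busemann's inequality is, as you note, never invoked. By contrast, the paper proves the theorem internally to its own machinery: it uses $\widetilde V_{i+1}(I_iK)=\mathcal{D}_{i+1}(\langle K^{(i+1)/n}\rangle)$ and $\widetilde V_{i+1}(K)=V_n(\langle K^{(i+1)/n}\rangle)$ (Theorem~\ref{reformulation}), the monotonicity of $\mathcal{D}_{i+1}$ under continuous Steiner symmetrization established in Step~2 of the proof of Theorem~\ref{periodic}, the convergence of iterated symmetrizations to a ball (Theorem~\ref{convergence}), and, for the equality case, the derivative analysis of Steps~3--4 of Theorem~\ref{periodic}. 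The trade-offs: the paper's route is self-contained within its CSS framework and illustrates how the same mechanism that settles the fixed-point problem also yields the inequality, but it requires the Lipschitz hypothesis on $K$ to deploy CSS; your route is shorter, works for an arbitrary star body, and makes the dependence on the classical Busemann inequality explicit, at the cost of importing that result rather than rederiving it.
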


To elucidate the strategy of our proof,  especially to stress which parts are new and which parts are essentially the same as in \cite{MSY},  we  try to summarize the proofs of Theorem \ref{A} and  Theorem \ref{periodic}, respectively.

Recall that to prove Theorem \ref{A}, Milman, Shabelman and Yehudayoff \cite[Proposition 6.2]{MSY} begin by identifying  $I^2K=cK$ as  the Euler-Lagrange equation of the functional $\mathcal{F}_c(K):=V_n(IK)-c(n-1)V_n(K),$
 and then characterize stationary points of $\mathcal{F}_c(K)$ under \emph{admissible radial perturbations} of star body $K$: the \emph{continuous Steiner symmetrization} $\{S^t_uK\}_{t \in [0,1]}$. In light of that $V_n(S^t_uK)=V_n(K)$, $t\in[0,1]$, they reduce the analysis of the equation $\frac{d}{dt}\big|_{t=0^+}\mathcal{F}_c(S^t_uK)=0$ to that of the equation
\begin{align}
\label{1.1}
    \frac{d}{dt}\Big|_{t=0^+}V_n(I(S^t_uK))=0.
\end{align}

Following \cite[Proposition 6.2]{MSY}, for each $i\in\{1,2,\ldots,n-2\}$,  we (Theorem \ref{Euler}) begin by identifying $I_i^2 K=cK$ as the Euler-Lagrange equation of  $\mathcal{F}_{c,i}(K):=\widetilde{V}_{i+1}(I_i K)-ci\widetilde{V}_{i+1}(K)$.
Then, we have to characterize stationary points of $\mathcal{F}_{c,i}(K)$ under our \emph{chosen} admissible radial perturbations  $\{K_t:= \langle (S^t_{u} \langle K^{\frac{i+1}{n}}\rangle)^{\frac{n}{i+1}}\rangle\}_{t\in[0,1]}$ of star body $K$, which is constructed through our \emph{newly} defined star body $\langle K^{\frac{i+1}{n}}\rangle$: the operation of taking a power of radial function $\rho_K$.  The advantage of this perturbation is that $\widetilde{V}_{i+1}(K_t)=\widetilde{V}_{i+1}(K)$, $t\in[0,1]$, so  we can
reduce the analysis of the equation $\frac{d}{dt}\big|_{t=0^+}\mathcal{F}_{c,i}(K_t)=0$ to that of the equation
\begin{align}
\label{1.2}
    \frac{d}{dt}\Big|_{t=0^+}\widetilde{V}_{i+1}(I_i(K_t))=0.
\end{align}

Second, to analyze \eqref{1.1}, by a novel application of the Blaschke-Petkantschin formula, Milman, Shabelman and Yehudayoff  \cite{MSY} reformulate  $V_n(IK)$ into
\begin{align*}
    \mathcal{I}_{0}(K):=\lim _{p \to (-1)^{+}} (p+1)(n-1)! \int_{(\mathbb{R}^{n})^n} V_n(\conv \{o,x_1,\ldots,x_n\})^{p} \mathsmaller{\prod}\limits_{j=1}^{n}1_{K}(x_j)\thinspace d x_{1} \cdots d x_{n}.
\end{align*}
 To calculate the limit involved in $\mathcal{I}_0(K)$, they further reformulate  $\mathcal{I}_0(K)$ into $\mathcal{I}_u(K)$, so that they can analyze
the behavior of $V_n(I(S^t_uK))$ by using the formula of $\mathcal{I}_{u}(S^t_uK)$.  Please refer to \cite[Theorems 1.9 and 1.12]{MSY} for details.

By contrast, to analyze \eqref{1.2},  using the  Blaschke-Petkantschin formula and our defined star body $\langle K^{\frac{i+1}{n}}\rangle$, we reformulate  $\widetilde{V}_{i+1}(I_i K)$ into
\begin{align*}
    \mathcal{V}_{i+1}(K):=b_{n,i}\int_{(\r)^{i+1}}\frac{\prod_{j=1}^{i+1}(|x_j|^{1-\frac{n}{i+1}}1_{\langle K^{\frac{i+1}{n}}\rangle}(x_j))}{V_{i+1}(\conv\{o,x_1,\ldots,x_{i+1}\})}\thinspace dx_1\cdots dx_{i+1},
\end{align*}
which involves the so-called radial weights $|x_j|^{1-\frac{n}{i+1}}$. To exploit our chosen admissible radial perturbations $\{ K_t=\langle (S^t_{u} \langle K^{\frac{i+1}{n}}\rangle)^{\frac{n}{i+1}}\rangle\}_{t\in[0,1]}$, we further reformulate $\mathcal{V}_{i+1}(K)$ into $\mathcal{V}_{i+1,u}(K)$, so that we can analyze the behavior of $\widetilde{V}_{i+1}(I_i(K_t))$ by using the formula of $\mathcal{V}_{i+1,u}(K_t)$. Please refer to Theorem \ref{reformulation} for details.

Finally, via geometric characterizations for equation $\frac{d}{dt}\big|_{t=0^+}\mathcal{I}_{u}(S^t_uK)=0$ \cite[Theorem 1.13]{MSY}, together with delicate arguments involving a key lemma on linear functions \cite[Lemma 1.14]{MSY} and a local form of the Bertrand-Brunn characterization of ellipsoids \cite[Theorem 7.8]{MSY}, the authors \cite{MSY} conclude that $K$ is an origin-symmetric \emph{ellipsoid}.

 By contrast, for case $i\in\{1,2,\ldots,n-2\}$, to derive that $K$ is an origin-symmetric \emph{ball}, we make full use of the \emph{geometry} of the integrand involved in $\mathcal{V}_{i+1,u}(K_t)$. Indeed, the integrand involved in $\mathcal{V}_{i+1,u}(K_t)$ is
 \begin{align*}
     V_{i+1}\big(R_{\boldsymbol{y}}(S^t_u\langle K^{\frac{i+1}{n}}\rangle)\cap [\Lambda_{u,\boldsymbol{y}}]_{\alpha}\cap H_{\boldsymbol{y}}(z)\big).
 \end{align*}
 Loosely speaking, it is the $(i+1)$-dimensional volume of intersection of the   Cartesian product of chords of $S^t_u\langle K^{\frac{i+1}{n}}\rangle$, the level set of  volume functional on random simplex, and the   Cartesian product of level sets of radial weights $|x_j|^{1-\frac{n}{i+1}}$.  Analyzing equation  $\frac{d}{dt}\big|_{t=0^+}\mathcal{V}_{i+1,u}(K_t)=0$, we conclude that $\langle K^{\frac{i+1}{n}}\rangle$ is symmetric with respect to $u^{\perp}$  for a.e. $u\in\s$, which leads to that $K$ is an origin-symmetric ball. \mbox{See Section \ref{3.3} for details.}

The article is organized as follows. In Section $\ref{2}$,  we collect some basic facts on dual mixed volumes and  the classical Blaschke-Petkantschin formulas.  Preliminary results like the \emph{regularity} of the spherical Radon transform, the continuous Steiner symmetrization on $u$-multi-graphical sets, and the admissible radial perturbations of star bodies, as achieved by  Milman, Shabelman and Yehudayoff \cite{MSY}, are also provided. These results are \emph{crucial} to this article. Proofs of Theorems \ref{periodic} and \ref{isoperimetric} are presented in Sections $\ref{3}$ and $\ref{4}$, respectively.

\vskip 10pt
\section{\bf Preliminaries}
\label{2}
\vskip 0pt

As usual,  write $|x|$ for the standard Euclidean norm of $x$ and $x \cdot y$   for the standard inner product in $x, y\in\r$, respectively. Let $[x, y]$ be the closed line segment with endpoints $x$ and $y$.  Denote
by $B_n(r)$ the Euclidean ball of radius $r$ in $\r$ centered at the origin $o$. The volume of $B_n$ is $\omega_n=\pi^{\frac{n}{2}}/\Gamma(1+\frac{n}{2})$.

For $u \in \mathbb{S}^{n-1}$, write $L_{u}=\operatorname{span}(\{u\})$ and let $L_{u}^{y}=y+L_{u}$ be the line through $y\in u^{\perp}$ in the direction $u$. Let $P_{E}$ be the orthogonal projection onto a linear subspace $E$ of $\r$.

The notion of \emph{$i$-th dual volume}, $i\in \mathbb{R}$, was originally defined by Lutwak \cite{Lutwak1}. For a star body $K$ in $\r$, its $i$-th dual volume $\widetilde{V}_i(K)$ is defined as $\widetilde{V}_i(K)=\frac{1}{n}\int_{\s}\rho_K^{i}(u)\thinspace du.$

For $A,B \subseteq \r$, their \emph{Minkowski sum} $A+B$ is the set $ \{a + b : a \in
A, b \in B\}$. Write $\conv A$ for the \emph{convex hull} of $A$, i.e., the smallest convex set containing $A$.

The Brunn-Minkowski inequality  reads: If $K,L$ are convex
bodies in $\r$ and $\lambda\in[0,1]$, then $V_n((1-\lambda)K+\lambda L)^{\frac{1}{n}}\geq (1-\lambda)V_n(K)^{\frac{1}{n}}+\lambda V_n(L)^{\frac{1}{n}},$ with equality if and only if $K$ and $L$ are homothetic. By the Brunn-Minkowski inequality, we obtain the following immediately.
\begin{lem}
\label{Brunn}
    If $K$ is a convex body and $L$ is a k-dimensional convex set in $\r$, then the function $g(x)=V_k(K\cap(L+x))^{\frac{1}{k}},\thinspace x\in\r$, is concave on its support.
\end{lem}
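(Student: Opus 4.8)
The plan is to reduce the statement to a single application of the Brunn-Minkowski inequality, carried out inside the $k$-dimensional affine subspace supporting $L$. Fix notation: let $E$ be the $k$-dimensional linear subspace parallel to $\operatorname{aff}(L)$, so that $L\subseteq y_0+E$ for some $y_0\in\r$, and for $x\in\r$ put $A(x):=(K-x)\cap L$. Then $A(x)=(K\cap(L+x))-x$, so by translation invariance of $\mathcal{H}^k$ one has $V_k(A(x))=V_k(K\cap(L+x))=g(x)^k$; moreover $A(x)$ is a bounded convex subset of $y_0+E$, and $V_k(A(x))>0$ precisely when $x$ belongs to the support of $g$. Since intersecting with the compact body $K$ already makes $A(x)$ bounded, and since passing to closures changes neither $V_k$ nor the relevant convexity (the relative boundary of a $k$-dimensional convex set being $V_k$-null), there is no loss in treating $\overline{A(x)}$ as a convex body in the $k$-dimensional space $y_0+E\cong\mathbb{R}^k$ whenever $x$ lies in the support of $g$.

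The heart of the argument is the inclusion
\[
(1-\lambda)A(x_0)+\lambda A(x_1)\ \subseteq\ A(x_\lambda),\qquad x_\lambda:=(1-\lambda)x_0+\lambda x_1,\quad\lambda\in[0,1],
\]
which I would verify by a one-line check: if $a_j\in A(x_j)$, then $a_j\in L$ and $a_j+x_j\in K$, hence $(1-\lambda)a_0+\lambda a_1\in L$ by convexity of $L$, while $(1-\lambda)(a_0+x_0)+\lambda(a_1+x_1)=\big((1-\lambda)a_0+\lambda a_1\big)+x_\lambda\in K$ by convexity of $K$, so $(1-\lambda)a_0+\lambda a_1\in(K-x_\lambda)\cap L=A(x_\lambda)$. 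Crucially, $A(x_0)$, $A(x_1)$, and therefore their Minkowski combination, all lie in the one fixed $k$-dimensional flat $y_0+E$, so this combination has a well-defined $k$-dimensional volume to which Brunn-Minkowski applies.

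To conclude, take $x_0,x_1$ in the support of $g$. Applying the Brunn-Minkowski inequality in $\mathbb{R}^k$ to $(1-\lambda)\overline{A(x_0)}+\lambda\overline{A(x_1)}$, then monotonicity of $V_k$ under the inclusion above, and finally $V_k(A(x_\lambda))=g(x_\lambda)^k$, yields
\[
g(x_\lambda)\ \ge\ V_k\big((1-\lambda)A(x_0)+\lambda A(x_1)\big)^{1/k}\ \ge\ (1-\lambda)g(x_0)+\lambda g(x_1)\ >\ 0,
\]
which simultaneously shows that the support of $g$ is convex (so that ``concave on its support'' is meaningful) and establishes the claimed concavity there. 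The argument is essentially routine; the only points needing care are keeping all Minkowski combinations inside a single $k$-dimensional flat, so that $V_k$ and Brunn-Minkowski are invoked in the correct dimension, and the mild technical reduction of a general $k$-dimensional convex set $L$ to the compact-convex-body hypotheses of Brunn-Minkowski — both handled above — and I do not expect any genuine obstacle.
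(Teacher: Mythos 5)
Your proof is correct and follows essentially the same route as the paper: the key inclusion $(1-\lambda)(K\cap(L+x_0))+\lambda(K\cap(L+x_1))\subseteq K\cap(L+x_\lambda)$ followed by translation invariance and Brunn--Minkowski in $\mathbb{R}^k$. The only differences are cosmetic bookkeeping (you pre-translate so that all slices live in one fixed flat $y_0+E$, and you spell out that the argument also proves convexity of the support), neither of which constitutes a different method.
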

\begin{proof}
    For $x,y\in\r$ and $\lambda\in[0,1]$, we have
    \begin{align*}
        &\quad \enspace K\cap (L+(1-\lambda)x+\lambda y)=K\cap ((1-\lambda)(L+x)+\lambda(L+y))
        \\
        &\supseteq (1-\lambda)(K\cap(L+x))+\lambda (K\cap(L+y)).
    \end{align*}

    W.l.o.g., assume $L\subseteq \mathbb{R}^k$. Then $K\cap (L+x)\subseteq x+\mathbb{R}^k$ and $K\cap(L+y)\subseteq y+\mathbb{R}^k$. By   the translation invariance of volume and the Brunn-Minkowski inequality in $\mathbb{R}^k$, we obtain
    \begin{align*}
        &\quad\enspace g((1-\lambda)x+\lambda y)=V_k(K\cap (L+(1-\lambda)x+\lambda y))^{\frac{1}{k}}
        \\
        &\geq V_k((1-\lambda)(K\cap(L+x))+\lambda (K\cap(L+y)))^{\frac{1}{k}}
        \\
        &= V_k((1-\lambda)(K\cap(L+x)-x)+\lambda (K\cap(L+y)-y))^{\frac{1}{k}}
        \\
        &\geq (1-\lambda)V_k(K\cap(L+x)-x)^{\frac{1}{k}}+\lambda V_k(K\cap(L+y)-y)^{\frac{1}{k}}
        \\
        &=(1-\lambda)g(x)+\lambda g(y).
    \end{align*}
    This completes the proof.
\end{proof}

Write $G_{n,k}$ for the Grassmannian of $k$-dimensional linear subspaces of $\r$.  The following integral geometric identities, which are often referred to as the \emph{Blaschke-Petkantschin formulas}, are needed. See, e.g., \cite[Theorem 2.1]{DPP}, \cite[Lemmas 5.1]{Gardner3} and \cite[Theorem 7.2.1]{SW} for details.
\begin{thm}
\label{oushi}
    If $h$ is a non-negative  Borel function on $(\r)^q$ and  $1 \leq q \leq k \leq n$, then
\begin{align*}
    &\int_{(\r)^q} h(x_1,\ldots,x_q)\thinspace dx_1\cdots dx_q
    \\
    =&\enspace c_{n,k,q}\int_{G_{n,k}}\int_{E^q} h(x_1,\ldots,x_q)V_q(\conv\{o,x_1,\ldots,x_q\})^{n-k}\thinspace dx_1\cdots dx_q dE,
\end{align*}
where $c_{n,k,q}=\frac{\omega_{n-q+1}\cdots\omega_n}{\omega_{k-q+1}\cdots \omega_k}(q!)^{n-k}$ and $dE$ is the Haar probability measure on $G_{n,k}$.

\end{thm}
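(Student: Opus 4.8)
The plan is to prove the formula in two stages: first establish the ``diagonal'' case $q=k$, and then bootstrap to the full range $q\le k\le n$ using the tower property of invariant measures on Grassmannians.

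\emph{Stage 1 (the case $k=q$).} Regard a $q$-tuple $(x_1,\ldots,x_q)$ as the columns of a matrix $X\in\mathbb{R}^{n\times q}$. Off a Lebesgue-null set $X$ has rank $q$, so its column space $F(X):=\sp\{x_1,\ldots,x_q\}$ lies in $G_{n,q}$. I would disintegrate Lebesgue measure $dx_1\cdots dx_q$ on $(\r)^q$ with respect to the map $X\mapsto F(X)$: the fiber over $F$ is a full-measure subset of $F^q$ carrying its intrinsic $q$-dimensional Lebesgue measure, and the task is to identify the disintegration density $w(x_1,\ldots,x_q)$. Two equivariances pin it down up to a constant. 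First, $w$ is $SO(n)$-invariant, by rotation invariance of Lebesgue measure and of the column-space map. Second, under $X\mapsto XA$ with $A\in GL(q)$ — which fixes the column space — Lebesgue measure on $(\r)^q$ scales by $|\det A|^{n}$ while the fiberwise Lebesgue measure on $F^q$ scales by $|\det A|^{q}$, forcing $w(XA)=|\det A|^{n-q}w(X)$. Now $V_q(\conv\{o,x_1,\ldots,x_q\})^{n-q}$ has exactly these two equivariances (it scales by $|\det A|^{n-q}$ and is rotation-invariant), and $SO(n)$ acts transitively on the relevant configurations, so $w=c_{n,q,q}\,V_q(\conv\{o,x_1,\ldots,x_q\})^{n-q}$ for a constant $c_{n,q,q}$. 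To evaluate that constant I would feed in a convenient test function — e.g. the Gaussian $h(x_1,\ldots,x_q)=\exp(-\pi\sum_j|x_j|^2)$, whose left side equals $1$ — and reduce the right side, by rotation invariance inside $F$, to a Gaussian integral over $(\mathbb{R}^q)^q$ against a power of the Gram determinant $\det(x_i\cdot x_j)$; this is a standard Wishart/Selberg-type evaluation giving the stated value of $c_{n,q,q}$. (Alternatively one computes the normal Jacobian of $X\mapsto F(X)$ directly from a $QR$-type factorization $X=QR$.)

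\emph{Stage 2 (bootstrap to $q\le k\le n$).} In the right-hand side of the asserted identity, apply Stage 1 inside each $E\in G_{n,k}$ — i.e.\ the case ``ambient dimension $k$, subspace dimension $q$'' — to the integrand $h(x_1,\ldots,x_q)\,V_q(\conv\{o,x_1,\ldots,x_q\})^{n-k}$; this rewrites $\int_{E^q}(\cdots)\,dx$ as $c_{k,q,q}\int_{G(E,q)}\int_{F^q}h\,V_q(\conv\{o,x_1,\ldots,x_q\})^{n-q}\,dx\,dF$, where $G(E,q)$ carries its invariant probability measure. The remaining integrand depends only on $F$ and on the points inside $F$, so the iterated integral $\int_{G_{n,k}}\int_{G(E,q)}(\cdots)\,dF\,dE$ collapses to $\int_{G_{n,q}}(\cdots)\,dF$ by uniqueness of the $SO(n)$-invariant probability measure on $G_{n,q}$: the push-forward of the product measure under $(E,F)\mapsto F$ is $SO(n)$-invariant, hence is the Haar probability measure. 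Comparing with Stage 1 applied in $\r$ directly, the right-hand side equals $\dfrac{c_{n,k,q}\,c_{k,q,q}}{c_{n,q,q}}\int_{(\r)^q}h\,dx$, and a one-line check shows $c_{n,k,q}\,c_{k,q,q}=c_{n,q,q}$: the $\omega$-products telescope ($\omega_{k-q+1}\cdots\omega_k$ cancels) and the $q!$-exponents add to $n-q$. This yields the formula.

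The main obstacle is Stage 1: converting the equivariance heuristic into a rigorous disintegration statement — dispatching the rank-deficient null set, justifying existence and measurability of the disintegration, and verifying that the fiberwise measure is genuinely intrinsic Lebesgue measure on $F$ — and then pinning down $c_{n,q,q}$ through the Gaussian/Gram-determinant computation (equivalently, the $QR$-Jacobian). Once Stage 1 is secured, Stage 2 is bookkeeping with invariant measures and factorials.
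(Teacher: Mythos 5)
The paper does not prove this theorem --- it is the classical linear Blaschke--Petkantschin formula, and the paper cites it directly from the literature (\cite[Theorem~2.1]{DPP}, \cite[Lemma~5.1]{Gardner3}, \cite[Theorem~7.2.1]{SW}). So there is no paper proof to compare against; I can only assess your sketch on its own terms.

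Your outline is correct. Stage~1 is the crux: the disintegration density over $G_{n,q}$ is pinned down by the two equivariances you identify --- $SO(n)$-invariance and scaling by $|\det A|^{n-q}$ under $X\mapsto XA$ --- together with the fact that $SO(n)\times GL(q)$ acts transitively on rank-$q$ matrices (e.g.\ via a $QR$-factorization), and $V_q(\conv\{o,x_1,\ldots,x_q\})^{n-q}=(q!)^{-(n-q)}\det(X^{\mathsf T}X)^{(n-q)/2}$ is, up to a constant, the unique function with both properties. The Gaussian normalization is indeed a Wishart-type evaluation and yields the stated $c_{n,q,q}$. Stage~2 is sound: applying the diagonal case inside $E$ turns $V_q^{n-k}$ into $V_q^{n-q}$, and the collapse $\int_{G_{n,k}}\int_{G(E,q)}\to\int_{G_{n,q}}$ follows from uniqueness of the $SO(n)$-invariant probability on $G_{n,q}$ (the pushforward of the fibered product measure under $(E,F)\mapsto F$ is $SO(n)$-invariant); the constant check $c_{n,k,q}\,c_{k,q,q}=c_{n,q,q}$ is a one-line telescoping of the $\omega$'s with additive $q!$-exponents, as you say. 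Compared with the cited references, which run the coarea formula and an explicit normal-Jacobian computation, your equivariance-plus-normalization argument is a cleaner conceptual wrapper around the same Jacobian; the reduction of the general $k$ to the diagonal $k=q$ via the tower property is also standard. The only place requiring real work is exactly where you flag it: upgrading the equivariance heuristic to a rigorous disintegration (or equivalently carrying out the $QR$/coarea Jacobian), which any of the cited sources supply.
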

In particular, if $1\leq q\leq k=n-1$, then
\begin{align*}
    &\int_{(\r)^q} h(x_1,\ldots,x_q)\thinspace dx_1\cdots dx_q \nonumber
    \\
    =\enspace&c_{n,n-1,q}\int_{G_{n,n-1}}\int_{E^q} h(x_1,\ldots,x_q)V_q(\conv\{o,x_1,\ldots,x_q\})\thinspace dx_1\cdots dx_q dE \nonumber
    \\
    =\enspace&\frac{c_{n,n-1,q}}{n\omega_{n}}\int_{\s}\int_{(u^{\perp})^q} h(x_1,\ldots,x_q)V_q(\conv\{o,x_1,\ldots,x_q\})\thinspace dx_1\cdots dx_q du \nonumber
    \\
    =\enspace&\frac{q!}{n\omega_{n-q}} \int_{\s}\int_{(u^{\perp})^q} h(x_1,\ldots,x_q)V_q(\conv\{o,x_1,\ldots,x_q\})\thinspace dx_1\cdots dx_q du.
\end{align*}

So, for each non-negative Borel function $F$ on $(\r)^q$ with $1\leq q\leq n-1$, letting
\begin{align*}
    h(x_1,\ldots,x_q)=F(x_1,\ldots,x_q)V_q(\conv\{o,x_1,\ldots,x_q\})^{-1},
\end{align*}
  we obtain  the following identity
\begin{align}
    \label{formula}
    \int_{\s}\int_{(u^{\perp})^q} F(x_1,\ldots,x_q)\thinspace dx_1\cdots dx_q du=\frac{n\omega_{n-q}}{q!}\int_{(\r)^q} \frac{F(x_1,\ldots,x_q)}{V_q(\conv\{o,x_1,\ldots,x_q\})}\thinspace dx_1\cdots dx_q.
\end{align}

Let $f : \r \to [0,\infty]$.  The \emph{level set} $[f]_\alpha$ of $f$ at $\alpha \in [0,\infty]$ is defined  by
    \begin{align*}
        [f]_\alpha=\{x\in\r: f(x)\geq \alpha\}.
    \end{align*}
    $f$ is called \emph{quasi-concave}, if its level sets $[f]_\alpha$ are
convex for all $\alpha\in[0,\infty]$. Keep in mind that $[\cdot]_\alpha$ always denotes the level set at the height $\alpha$ throughout this article.

A functional $F : (\r)^q \to [0,\infty]$ is called \emph{Steiner concave}, if for
every $u\in\s$ and $\boldsymbol{y} = (y_1,\ldots,y_q ) \in (u^{\perp})^q$, the function $F_{u,\boldsymbol{y}} : \mathbb{R}^q \to [0,\infty]$ given by
\begin{align*}
    F_{u,\boldsymbol{y}}(s_1,\ldots,s_q)=F(y_1+s_1u,\ldots,y_q+s_qu),\quad (s_1,\ldots,s_q)\in \mathbb{R}^q,
\end{align*}
is even and quasi-concave.  Refer to the excellent survey \cite{PP2} for more information.

Interpreting $1/0$
as $\infty$. For later use, we introduce the functional
$$\Lambda(x_1,\ldots,x_q)=V_q(\conv\{o,x_1,\ldots,x_q\})^{-1},\quad (x_1,\ldots,x_q)\in (\r)^q, \enspace 1\leq q\leq n.$$
Therefore,
\begin{align*}
    \Lambda_{u,\boldsymbol{y}}(s_1,\ldots,s_q)=V_q(\conv\{o,y_1+s_1u,\ldots,y_q+s_qu\})^{-1}, \quad (s_1,\ldots,s_q)\in \mathbb{R}^q.
\end{align*}

With the help of the following Theorem \ref{half-convex}, i.e., \cite[Proposition 4.1]{PP2} by  Paouris and  Pivovarov, we prove that the functional $\Lambda$ is Steiner concave for $1\leq q\leq n$.

\begin{thm}
\label{half-convex}
    Let  $C$ be a compact convex set in
$\r\times \mathbb{R}^N$ and
$P_t(x,y)=x+(y\cdot t)u$,  $(x,y)\in  \r\times \mathbb{R}^N$ for $t\in\mathbb{R}^N$ and  $u\in\s$. Then for all integers $1\leq q\leq n$, the function $t\mapsto \mathbf{V}_q(P_tC)$, $t\in \mathbb{R}^N$, is convex, where $\mathbf{V}_q(P_tC)$ is the $q$-th intrinsic volume of $P_tC$.
\end{thm}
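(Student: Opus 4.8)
The plan is to adapt the classical Rogers--Shephard shadow-system technique to intrinsic volumes: reduce $\mathbf{V}_q$ to volumes of $q$-dimensional projections by Kubota's formula, reduce the $N$-dimensional parameter $t$ to a scalar one by restricting to lines, recognize the resulting one-parameter family as a shadow system in the direction $u$, and conclude by integrating over fibres parallel to $u$.

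First I would note that $P_t$ is linear in $(x,y)$, so that each $P_tC$ is a compact convex set (with $t\mapsto P_tC$ continuous in the Hausdorff metric), and recall that a function on $\mathbb{R}^N$ is convex if and only if its restriction to every line is convex. By Kubota's formula,
\[
\mathbf{V}_q(P_tC)=c_{n,q}\int_{G_{n,q}}V_q\!\left(P_E(P_tC)\right)dE ,
\]
with $c_{n,q}>0$ a purely dimensional constant and $dE$ the Haar probability measure on $G_{n,q}$; since convexity in $t$ is preserved under averaging over the fixed space $G_{n,q}$, it suffices to show that $t\mapsto V_q\!\left(P_E(P_tC)\right)$ is convex for each fixed $E\in G_{n,q}$. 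Since $P_E(P_t(x,y))=P_Ex+(y\cdot t)\,P_Eu$, if $P_Eu=0$ this is independent of $t$ and there is nothing to prove; otherwise I would normalize by $e=P_Eu/|P_Eu|$ and $\lambda=|P_Eu|>0$, identify $E\cong\mathbb{R}^q$, and set $C'=\{(P_Ex,y):(x,y)\in C\}$, a compact convex subset of $E\times\mathbb{R}^N$. The task then becomes the convexity of $t\mapsto V_q(Q_t)$, where $Q_t:=P_E(P_tC)=\{x'+\lambda(y\cdot t)e:(x',y)\in C'\}$.

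Next, fixing an arbitrary line $t=t_0+sv$ in $\mathbb{R}^N$ and setting $\xi=x'+\lambda(y\cdot t_0)e$, $\sigma=\lambda(y\cdot v)$, I would rewrite $Q_{t_0+sv}=\{\xi+s\sigma e:(\xi,\sigma)\in\widehat C\}$, where $\widehat C:=\{(x'+\lambda(y\cdot t_0)e,\ \lambda(y\cdot v)):(x',y)\in C'\}\subseteq E\times\mathbb{R}$ is a linear image of $C'$, hence a compact convex set \emph{independent of $s$}; thus $\{Q_{t_0+sv}\}_{s\in\mathbb{R}}$ is a one-parameter shadow system along the direction $e$ generated by $\widehat C$. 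I would then finish with the standard fibre-integration step: let $F$ be the $(q-1)$-dimensional orthogonal complement of $e$ inside $E$ (so $E=F\oplus\mathbb{R}e$) and coordinatize $E\times\mathbb{R}$ by $(w,\eta,\sigma)\in F\times\mathbb{R}\times\mathbb{R}$; for $w_0\in F$ the fibre of $Q_{t_0+sv}$ over $w_0$ is a segment parallel to $e$ of length $\ell(w_0,s)=h_{\widehat C_{w_0}}(1,s)+h_{\widehat C_{w_0}}(-1,-s)$, where $\widehat C_{w_0}=\{(\eta,\sigma):(w_0,\eta,\sigma)\in\widehat C\}$ is the planar fibre of $\widehat C$, and being a sum of support functions of $\widehat C_{w_0}$ restricted to two lines in the parameter $s$, $\ell(w_0,\cdot)$ is convex. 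Since the projection of $Q_{t_0+sv}$ onto $F$ equals the $s$-independent set $\{w:(w,\eta,\sigma)\in\widehat C\}$, Fubini gives $V_q(Q_{t_0+sv})=\int_F\ell(w_0,s)\,dw_0$, an integral over a fixed domain of functions convex in $s$, hence itself convex in $s$. This yields the convexity of $t\mapsto V_q(Q_t)$, and the theorem follows.

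I expect the main difficulty to be precisely this reduction: one must organize the algebra so that the one-parameter family $\{Q_{t_0+sv}\}_s$ is an honest shadow system, equivalently so that its ``shadow'' (its projection onto $F$) does not move with $s$; once this is secured, the convexity of the fibre lengths --- and hence of the volume --- is just convexity of support functions restricted to lines. Everything else is routine bookkeeping with linear maps, together with two classical ingredients, Kubota's formula and the convexity of volume along a shadow system, which may be invoked as known or, for a self-contained treatment, proved exactly as sketched above.
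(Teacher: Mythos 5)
The paper does not present its own proof of this statement: it is imported verbatim from Paouris and Pivovarov as \cite[Proposition 4.1]{PP2}, so there is no in-paper argument to compare against. Your proof is correct and self-contained, and it follows what is essentially the standard route for this result. In order: Kubota's formula reduces $\mathbf{V}_q(P_tC)$ to an average (over $G_{n,q}$, with positive weight) of the projection volumes $V_q(P_E(P_tC))$, so it suffices to prove convexity of each; restricting $t$ to a line $t_0+sv$ and writing $P_E(P_{t_0+sv}C)=\{\xi+s\sigma e:(\xi,\sigma)\in\widehat C\}$ exhibits the one-parameter family as a Rogers--Shephard linear parameter (shadow) system in $E$ generated by the fixed compact convex set $\widehat C\subset E\times\mathbb{R}$; and since the projection onto $F=e^\perp\cap E$ does not move with $s$, Fubini reduces $V_q$ to an integral over a fixed base of fibre lengths $\ell(w_0,s)=h_{\widehat C_{w_0}}(1,s)+h_{\widehat C_{w_0}}(-1,-s)$, each convex in $s$ as a support function precomposed with an affine map. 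Two small things worth making explicit in a write-up: handle the null set of $E\in G_{n,q}$ with $P_Eu=0$ (which you do, constancy in $t$ sufficing), and note that for $w_0$ outside the shadow $P_F\widehat C$ one sets $\ell(w_0,\cdot)\equiv 0$ so the convexity-under-integration step is applied on all of $F$ with a fixed domain. Your re-derivation of shadow-system volume convexity from scratch (rather than citing Rogers--Shephard/Shephard) is a pleasant bonus and makes the argument fully self-contained.
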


\begin{lem}
\label{convex}
      $\Lambda:(\r)^q\to [0,\infty]$ is Steiner concave for  all integers $1\leq q\leq n$.
\end{lem}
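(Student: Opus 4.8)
The plan is to deduce Steiner concavity of $\Lambda$ directly from Theorem \ref{half-convex}. Fix $u\in\s$ and $\boldsymbol y=(y_1,\dots,y_q)\in(u^{\perp})^q$. By definition, we must show that the function
\[
  \Lambda_{u,\boldsymbol y}(s_1,\dots,s_q)=V_q\big(\conv\{o,y_1+s_1u,\dots,y_q+s_qu\}\big)^{-1}
\]
is even and quasi-concave on $\mathbb R^q$. Evenness should be the easy half: replacing $u$ by $-u$ fixes the hyperplane $u^\perp$ pointwise, so $\conv\{o,y_1-s_1u,\dots,y_q-s_qu\}$ is the reflection of $\conv\{o,y_1+s_1u,\dots,y_q+s_qu\}$ across $u^\perp$, and reflections preserve $q$-dimensional volume; hence $\Lambda_{u,\boldsymbol y}(-s_1,\dots,-s_q)=\Lambda_{u,\boldsymbol y}(s_1,\dots,s_q)$.

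For quasi-concavity, interpreting $1/0$ as $\infty$, I would show that for every $\alpha\in[0,\infty]$ the level set $[\Lambda_{u,\boldsymbol y}]_\alpha$ is convex. When $\alpha=0$ this is all of $\mathbb R^q$; when $\alpha=\infty$ it is $\{(s_1,\dots,s_q): V_q(\conv\{o,y_1+s_1u,\dots,y_q+s_qu\})=0\}$, the set of degenerate configurations, which I will handle at the end. For $0<\alpha<\infty$, $(s_1,\dots,s_q)\in[\Lambda_{u,\boldsymbol y}]_\alpha$ exactly when $V_q(\conv\{o,y_1+s_1u,\dots,y_q+s_qu\})\le 1/\alpha$, so it suffices to prove that
\[
  g(s_1,\dots,s_q):=V_q\big(\conv\{o,y_1+s_1u,\dots,y_q+s_qu\}\big)
\]
is a convex function of $(s_1,\dots,s_q)$ (sublevel sets of a convex function are convex). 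This is precisely where Theorem \ref{half-convex} enters: take $N=q$, let $C=\conv\{(o,0),(y_1,e_1),\dots,(y_q,e_q)\}\subseteq\r\times\mathbb R^q$ where $e_1,\dots,e_q$ is the standard basis of $\mathbb R^q$, and set $P_t(x,y)=x+(y\cdot t)u$. Then $P_t$ maps the generating points of $C$ to $o,\,y_1+t_1u,\,\dots,\,y_q+t_qu$ respectively, so $P_tC=\conv\{o,y_1+t_1u,\dots,y_q+t_qu\}$; this set lies in the $q$-dimensional affine subspace $\sp\{y_1,\dots,y_q,u\}$ plus appropriate translation (more precisely it is at most $q$-dimensional, since it is the convex hull of $q+1$ points). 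Hence its $q$-th intrinsic volume $\mathbf V_q(P_tC)$ coincides with its $q$-dimensional Hausdorff measure $V_q(P_tC)=g(t_1,\dots,t_q)$, and Theorem \ref{half-convex} gives convexity of $t\mapsto g(t)$.

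The one point requiring a little care — and the main (minor) obstacle — is the degenerate locus $\alpha=\infty$, i.e.\ showing $\{g=0\}$ is convex. But $g$ is a non-negative convex function, so $\{g=0\}$ is the set where $g$ attains its minimum, which is automatically convex; alternatively one can note $\{g=0\}=\bigcap_{\alpha>0}[\Lambda_{u,\boldsymbol y}]_\alpha$ is an intersection of convex sets. Combining the evenness observation with the convexity of $g$ (hence quasi-concavity of $1/g$), we conclude that $\Lambda_{u,\boldsymbol y}$ is even and quasi-concave for every admissible $u$ and $\boldsymbol y$, which is exactly the assertion that $\Lambda$ is Steiner concave for all $1\le q\le n$.
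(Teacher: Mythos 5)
Your proof is correct and takes essentially the same route as the paper: both reduce the claim to showing $g(s_1,\dots,s_q)=V_q(\conv\{o,y_1+s_1u,\dots,y_q+s_qu\})$ is even (by reflection across $u^\perp$) and convex (by applying Theorem \ref{half-convex} with the same choice of $C$ and $P_t$, then identifying $\mathbf V_q$ with $V_q$ since $P_tC$ is at most $q$-dimensional). Your explicit treatment of the degenerate level set $\alpha=\infty$ is a small extra courtesy the paper omits, but it does not change the argument.
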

\begin{proof}
For $u\in\s$ and $\boldsymbol{y} = (y_1,\ldots,y_q ) \in (u^{\perp})^q$,  by the definition of level set, we have
\begin{align*}
    [\Lambda_{u,\boldsymbol{y}}]_\alpha&=\{(s_1,\ldots,s_q):V_q(\conv\{o,y_1+s_1u,\ldots,y_q+s_qu\})^{-1}\geq \alpha\}
        \\
    &=\{(s_1,\ldots,s_q):V_q(\conv\{o,y_1+s_1u,\ldots,y_q+s_qu\})\leq \frac{1}{\alpha}\}, \quad \forall \alpha\in[0,\infty].
\end{align*}
So, it suffices to show that the function
\begin{align*}
    \mathbb{R}^q\ni (s_1,\ldots,s_q) \mapsto V_q(\conv\{o,y_1+s_1u,\ldots,y_q+s_qu\})
\end{align*}
is even and convex.

First, in light of that $\conv\{o,y_1+s_1u,\ldots,y_q+s_qu\}$ and $\conv\{o,y_1-s_1u,\ldots,y_q-s_qu\}$ are symmetric with respect to $u^{\perp}$, it follows that
\begin{align*}
   V_q(\conv\{o,y_1+s_1u,\ldots,y_q+s_qu\})=V_q(\conv\{o,y_1-s_1u,\ldots,y_q-s_qu\}),
\end{align*}
which implies that the above function  is even.

 Second,  let $\{e_k\}_{k=1}^{q}$ be an orthonormal basis of $\mathbb{R}^q$. Putting $t=\sum_{k=1}^q s_ke_k$, $N=j=q$, $C=\conv\{(o,o),(y_1,e_1),\ldots, (y_q,e_q)\}\subseteq \r\times \mathbb{R}^q$, and $ P_t(y_k,e_k)=y_k+(e_k\cdot t)u=y_k+s_ku$ into Theorem \ref{half-convex},  it follows that
\begin{align*}
    \mathbb{R}^q\ni (s_1,\ldots,s_q) \mapsto V_q(\conv\{o,y_1+s_1u,\ldots,y_q+s_qu\})=V_q(P_tC)=\mathbf{V}_q(P_tC)
\end{align*}
is convex. The last equality holds since the  Hausdorff dimension of set $P_tC$ is at most $q$ (See, e.g., \cite[Section A.4]{1995}).  This completes the proof.
\end{proof}

\subsection{Spherical Radon transform and its regularity} \
\label{2.1}
\vskip 5pt
Let $f$ be a Borel function on $\s$. Its \emph{spherical Radon (or Funk) transform} $\R(f)$ is
\begin{align*}
    \mathcal{R}(f)(u)=\int_{\mathbb{S}^{n-1} \cap u^{\perp}} f(\theta) d \theta,\quad \forall\thinspace u\in\s.
\end{align*}
The spherical Radon transform $\R$ is self-adjoint, in the sense that
\begin{align}
    \label{symmetric}
    \int_{\mathbb{S}^{n-1}} \mathcal{R}(f) g \thinspace d u=\int_{\mathbb{S}^{n-1}} f \mathcal{R}(g) d u, \quad \forall\thinspace f, g \in L^{\infty}\left(\mathbb{S}^{n-1}\right).
\end{align}
Refer to \cite[Appendix C.2]{1995} for details.

Given $n \geq 3$ and a real parameter $s \geq 0$, let  $H^s(\s)$ denote the fractional Sobolev space. In particular, $L^{\infty}(\s)\subseteq L^2(\s)=H^0(\s)$.

By applying Lemma \ref{regular}, i.e.,  Proposition A.5 in \cite{MSY}, we reproduce the proof strategy of Theorem A.1 in \cite{MSY} to obtain a direct adaptation of it.
\begin{lem}[{\cite[Proposition A.5]{MSY}}]
\label{regular}
    If $f \in H^s(\s) \cap L^{\infty}(\s)$, then
 \begin{align*}
     \R(f^k) \in H^{s+\frac{n}{2}-1}(\s) \cap L^{\infty}(\s)
 \end{align*}
 for all integer $k \geq 1$.
\end{lem}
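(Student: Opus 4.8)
The plan is to split the statement into two essentially independent ingredients about the sphere and then compose them. \textbf{(I)} A \emph{smoothing} property of the Funk transform: $\R$ is bounded from $H^t(\s)$ into $H^{t+\frac{n}{2}-1}(\s)$ for every real $t\ge 0$, and $\R$ is bounded from $L^{\infty}(\s)$ into $L^{\infty}(\s)$. \textbf{(II)} An \emph{algebra} property: $H^s(\s)\cap L^{\infty}(\s)$ is closed under pointwise products, so that $f\in H^s(\s)\cap L^{\infty}(\s)$ forces $f^k\in H^s(\s)\cap L^{\infty}(\s)$ for every integer $k\ge 1$. Granting (I) and (II), the Lemma is immediate: by (II), $f^k\in H^s(\s)\cap L^{\infty}(\s)$; applying (I) with $t=s$ then gives $\R(f^k)\in H^{s+\frac{n}{2}-1}(\s)\cap L^{\infty}(\s)$. (For $k=1$ the statement is just ingredient (I).)

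For ingredient (I), I would pass to spherical harmonics. Writing $Q_\ell$ for the orthogonal projection of $L^2(\s)$ onto the space of degree-$\ell$ spherical harmonics, one has the norm equivalence $\|h\|_{H^t(\s)}^2\asymp\sum_{\ell\ge 0}(1+\ell)^{2t}\|Q_\ell h\|_{L^2(\s)}^2$, while the Funk--Hecke theorem exhibits $\R$ as the Fourier multiplier $\R=\sum_{\ell\ge 0}\lambda_\ell Q_\ell$, where $\lambda_\ell=0$ for odd $\ell$ and, for even $\ell$, $\lambda_\ell$ is an explicit ratio of Gamma functions whose Stirling asymptotics yield $|\lambda_\ell|\asymp(1+\ell)^{1-\frac{n}{2}}$ as $\ell\to\infty$ through the even integers; see \cite[Appendix C.2]{1995}. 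Hence, for $g\in H^t(\s)$,
\begin{align*}
    \|\R g\|_{H^{t+\frac{n}{2}-1}(\s)}^2\asymp\sum_{\ell\ \mathrm{even}}(1+\ell)^{2t+n-2}\,|\lambda_\ell|^2\,\|Q_\ell g\|_{L^2(\s)}^2\lesssim\sum_{\ell\ge 0}(1+\ell)^{2t}\,\|Q_\ell g\|_{L^2(\s)}^2\asymp\|g\|_{H^t(\s)}^2,
\end{align*}
which is the asserted smoothing bound (the odd part of $g$ being simply annihilated by $\R$). The $L^{\infty}$ half is separate and elementary: by Fubini's theorem, $\R$ is bounded on $L^1(\s)$ with operator norm at most $\mathcal{H}^{n-2}(\s\cap u^{\perp})$, a dimensional constant independent of $u$, and since $\R$ is self-adjoint by \eqref{symmetric}, duality gives boundedness of $\R$ on $L^{\infty}(\s)$ with the same norm.

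For ingredient (II), I would invoke the Moser / Kato--Ponce (fractional Leibniz) product estimate on the compact manifold $\s$: for $s\ge 0$,
\begin{align*}
    \|fg\|_{H^s(\s)}\le C\bigl(\|f\|_{H^s(\s)}\|g\|_{L^{\infty}(\s)}+\|f\|_{L^{\infty}(\s)}\|g\|_{H^s(\s)}\bigr),\qquad f,g\in H^s(\s)\cap L^{\infty}(\s),
\end{align*}
which can be obtained by transporting the classical Euclidean inequality through a finite atlas and a partition of unity, or directly from a Littlewood--Paley decomposition adapted to $\s$. Iterating in $k$ gives $\|f^k\|_{H^s(\s)}\le C_k\,\|f\|_{L^{\infty}(\s)}^{k-1}\|f\|_{H^s(\s)}<\infty$ for a constant $C_k$ depending only on $k$, $s$ and $n$, while trivially $\|f^k\|_{L^{\infty}(\s)}=\|f\|_{L^{\infty}(\s)}^k$; hence $f^k\in H^s(\s)\cap L^{\infty}(\s)$ for every $k\ge 1$, and the composition in the first paragraph finishes the proof.

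The main obstacle is the \emph{sharpness of the exponent} in ingredient (I): one must be precise that the Funk multiplier decays exactly like $|\lambda_\ell|\asymp(1+\ell)^{1-\frac{n}{2}}$, which rests on the explicit Funk--Hecke eigenvalue formula together with a careful Stirling estimate, and one must keep track of the fact that $\R$ annihilates all odd spherical harmonics (this only helps). By contrast, the algebra property in ingredient (II) and the $L^1$--$L^{\infty}$ duality argument for the $L^{\infty}$ bound are routine, and the final assembly is a one-line composition; the argument is, in substance, the one of \cite[Proposition A.5]{MSY}.
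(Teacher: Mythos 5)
Your proposal is mathematically sound, but note first that the paper under discussion does not prove this lemma at all: it states it with the citation \cite[Proposition A.5]{MSY} and uses it as a black box in the proof of Theorem \ref{regularity}. So there is no ``paper's own proof'' here to compare against; the comparison can only be with the argument in \cite{MSY}.

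That said, your decomposition into (I) a Funk-transform smoothing bound $\R:H^t\to H^{t+\frac{n}{2}-1}$ plus $\R:L^\infty\to L^\infty$, and (II) the algebra property of $H^s\cap L^\infty$ under pointwise powers via a Moser/Kato--Ponce product estimate, is exactly the standard route and the one taken in \cite{MSY}. The key technical inputs are all in order: the Funk--Hecke eigenvalue asymptotics $|\lambda_\ell|\asymp(1+\ell)^{1-\frac{n}{2}}$ on even degrees (you only need the upper bound here), the spherical-harmonic characterization of $H^t(\s)$, and the fractional Leibniz rule on a compact manifold. One small simplification: the $L^\infty$ boundedness of $\R$ does not need the $L^1$-plus-duality detour; it is immediate from $|\R(f)(u)|\le\|f\|_{L^\infty}\,\mathcal{H}^{n-2}(\s\cap u^\perp)$, with the constant $\mathcal{H}^{n-2}(\mathbb{S}^{n-2})$ independent of $u$. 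Otherwise the assembly is correct and complete, modulo citing or localizing the Kato--Ponce estimate, which you flag appropriately.
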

\begin{thm}[after Milman,  Shabelman and Yehudayoff \cite{MSY}]
\label{regularity}
  Let  $f \in L^{\infty}(\s)$ satisfy
 $\R(\R(f^k)^k)=cf$
    for some integers $n\geq 3$, $k\geq 1$ and some real number $c\neq 0$. Then  (possibly modifying f on a null-set) $f \in C^{\infty}(\s)$. In addition, if $f$ is non-negative, then either it is identically zero or else it is strictly positive.
\end{thm}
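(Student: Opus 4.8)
The plan is to bootstrap regularity from the fixed-point equation $\R(\R(f^k)^k)=cf$ using Lemma \ref{regular} repeatedly, and then handle the positivity dichotomy by a connectedness argument on the zero set. First I would set $g:=\R(f^k)$. Since $f\in L^\infty(\s)\subseteq H^0(\s)\cap L^\infty(\s)$, Lemma \ref{regular} (applied with $s=0$) gives $g=\R(f^k)\in H^{n/2-1}(\s)\cap L^\infty(\s)$. Applying Lemma \ref{regular} once more, this time to $g$ with $s=n/2-1$, yields $\R(g^k)\in H^{(n/2-1)+(n/2-1)}(\s)\cap L^\infty(\s)=H^{n-2}(\s)\cap L^\infty(\s)$. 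But $\R(g^k)=\R(\R(f^k)^k)=cf$ and $c\neq 0$, so $f\in H^{n-2}(\s)\cap L^\infty(\s)$ after modification on a null-set.

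Now iterate: having shown $f\in H^{s_m}(\s)\cap L^\infty(\s)$ for some $s_m\geq 0$, the same two applications of Lemma \ref{regular} give $f=c^{-1}\R(\R(f^k)^k)\in H^{s_m+2(n/2-1)}(\s)\cap L^\infty(\s)=H^{s_m+(n-2)}(\s)\cap L^\infty(\s)$. Since $n\geq 3$ gives $n-2\geq 1>0$, the Sobolev exponent increases by a fixed positive amount $n-2$ at each step, so $f\in H^s(\s)$ for every $s\geq 0$, hence $f\in\bigcap_{s\geq 0}H^s(\s)=C^\infty(\s)$ by the Sobolev embedding theorem on the compact manifold $\s$. (One should be slightly careful that the null-set modification is consistent across iterations — but each step identifies $f$ with the genuinely continuous function $c^{-1}\R(\R(f^k)^k)$, so after the first modification there is nothing further to modify.)

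For the final assertion, assume $f$ is non-negative, so now $f\in C^\infty(\s)$ and $f\geq 0$. Suppose $f$ is not identically zero; I want to show $f>0$ everywhere. Since $f\geq 0$ and $f\not\equiv 0$, there is a spherical cap on which $f>0$, so $f^k>0$ there, and thus $g=\R(f^k)$, being an integral of a non-negative function that is positive on an open set, satisfies $g(u)=\int_{\s\cap u^\perp}f^k\,d\theta>0$ for every $u$ such that the great subsphere $\s\cap u^\perp$ meets that cap — in particular $g>0$ on an open set. Repeating, $\R(g^k)>0$ on an open set, and since $cf=\R(g^k)$ with $c\neq 0$, we get that $f$ is \emph{not} identically zero \emph{and} is bounded below away from $0$ on some open set; but more is true. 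The cleanest route: if $f(u_0)=0$ for some $u_0$, then since $cf(u_0)=\R(\R(f^k)^k)(u_0)=\int_{\s\cap u_0^\perp}\R(f^k)^k\,d\theta$ and the integrand is non-negative and continuous, we get $\R(f^k)\equiv 0$ on the great subsphere $\s\cap u_0^\perp$; then for each $\theta\in\s\cap u_0^\perp$, $0=\R(f^k)(\theta)=\int_{\s\cap\theta^\perp}f^k$, forcing $f\equiv 0$ on $\s\cap\theta^\perp$; as $\theta$ ranges over $\s\cap u_0^\perp$, the union $\bigcup_{\theta\in\s\cap u_0^\perp}(\s\cap\theta^\perp)$ is all of $\s$ (every $v\in\s$ is orthogonal to some $\theta\in u_0^\perp$, e.g. any $\theta\in u_0^\perp\cap v^\perp$, which is nonempty for $n\geq 3$), so $f\equiv 0$, a contradiction. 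Hence $f>0$ everywhere, as claimed.

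The main obstacle is bookkeeping rather than conceptual: one must verify that each invocation of Lemma \ref{regular} is legitimate — i.e. that the function being fed in really lies in $H^s(\s)\cap L^\infty(\s)$ with the claimed $s$ (so that the hypothesis "$f\in H^s\cap L^\infty$" is met), and that the regularity gain $s\mapsto s+\tfrac n2-1$ applies to $k$-th powers as stated. The positivity dichotomy hinges on the elementary but crucial observation (valid precisely because $n\geq 3$) that the great subspheres $\s\cap\theta^\perp$, as $\theta$ runs over a single great subsphere $\s\cap u_0^\perp$, already cover $\s$; this is what propagates a single zero of $f$ to all of $\s$.
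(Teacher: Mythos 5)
Your argument is correct and follows essentially the same route as the paper's: the same bootstrapping via two applications of Lemma \ref{regular} per iteration (gaining $n-2$ Sobolev orders each time) followed by Sobolev embedding, and the same propagation-of-a-zero argument for the positivity dichotomy (zero at $u_0$ forces $\R(f^k)\equiv 0$ on $\s\cap u_0^{\perp}$, which forces $f\equiv 0$ on all $\s\cap\theta^{\perp}$ with $\theta\in u_0^{\perp}$, and these cover $\s$ since $n\geq 3$). The only cosmetic difference is that the paper records the Sobolev gain as a single step $s\mapsto s+(n-2)$, whereas you unpack it into the two half-steps of $\tfrac{n}{2}-1$ each; you also state the covering fact explicitly where the paper leaves it implicit.
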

\begin{proof}
    Since  $\R(\R(f^k)^k)=cf$ for $f\in L^\infty(\s) \subseteq H^0(\s)$, by using Lemma $\ref{regular}$ twice it follows that $f \in H^{s+n-2}(\s) \cap L^{\infty}(\s)$; For $f\in H^{s+n-2}(\s) \cap L^{\infty}(\s)$, since  $\R(\R(f^k)^k)=cf$, by using Lemma $\ref{regular}$ twice we have $f \in H^{s+2(n-2)}(\s) \cap L^{\infty}(\s)$. Repeating the arguments $m$ times, it follows that $f \in H^{m(n-2)}(\s) \cap L^{\infty}(\s)$. By a standard application of the Sobolev-Morrey embedding theorem \cite[Theorem 6.3]{HR}, it follows that $f$ is $C^{\infty}$-smooth, up to modifying $f$ on a null-set.

In addition, assume $f\in C^{\infty}(\s)$ is non-negative and $f (u_0) = 0$ for some $u_0\in\s$. Let $g=\R(f^k)$. Then $g\in C^{\infty}(\s)$ is also non-negative and $\R(g^k)(u_0) = cf (u_0) = 0$, which implies that $g$ vanishes on $\s\cap u_0^{\perp}$. In turn, this implies that $f$ vanishes on $\s\cap u^{\perp}$ for all
$u \in \s\cap u_0^{\perp}$, and therefore $f$ is identically zero on $\s$. Consequently,  $f$ is strictly positive if $f$ is not identically zero.
\end{proof}

\subsection{Continuous Steiner symmetrization}\
\label{2.2}
\vskip 5pt

Continuous Steiner symmetrization (CSS) for graphical domains has its origins in the
work of P\'{o}lya and Szeg\"{o} \cite[Note B]{PS}. For the class of convex bodies, CSS is  a particular case of a shadow system \cite{RS, Shephard}, a well-established and extremely powerful tool, which has played a crucial role in solving a wide range of geometric extremization problems. See, e.g., \cite{CG1, CG2, CFPP, MR, MY, RS, Saroglou, Shephard}, for details.

Following Milman, Shabelman and Yehudayoff \cite[Section 4]{MSY}, this part is devoted to a brief introduction of the CSS on a \emph{$u$-multi-graphical} set. Recall that $L_{u}^{y}=y+L_{u}$ is the line through the point $y\in u^{\perp}$ in the direction $u$.
\begin{de}[$u$-multi-graphical set]
\label{multi}
    Given $u \in \mathbb{S}^{n-1}$, a compact set $K$ in $\mathbb{R}^{n}$ is called $u$-multi-graphical, if there exist disjoint open sets $\Omega_{1}, \Omega_{2}, \ldots \subseteq P_{u^{\perp}} K$ and two sequences of continuous functions
$$
f_{i}, g_{i}: \mathsmaller{\bigcup}\limits_{m=i}^{\infty} \Omega_{m} \rightarrow \mathbb{R}
$$
such that the following properties hold:
\begin{enumerate}[left=0pt,  align=hang]
\item Denoting $\Omega_{\infty}:=\cup_{m} \Omega_{m}$, we have $\mathcal{H}^{n-1}\left(P_{u^{\perp}} K \backslash \Omega_{\infty}\right)=0$.
  \item $f_{1}<g_{1}<f_{2}<g_{2}<\cdots<f_{m}<g_{m}$ on $\Omega_{m}$.
\item For all $y \in \Omega_{m}$, $K \cap L_{u}^{y}=y+u \cup_{i=1}^{m}\left[f_{i}(y), g_{i}(y)\right]$.
\end{enumerate}
\end{de}

Following Milman, Shabelman and Yehudayoff \cite[Section 4]{MSY}, the CSS is constructed in two steps. First,  define $S^{t} J$ where $J \subseteq \mathbb{R}$ is a finite disjoint union of closed intervals $J=\cup_{i=1}^{m}\left[c_{i}-\ell_{i}, c_{i}+\ell_{i}\right]$ ($\ell_{i}>0$). The idea, going back to the work of Rogers \cite{Rogers} and Brascamp-Lieb-Luttinger \cite{BLL}, is as follows. Each interval $\left[c_{i}-\ell_{i}, c_{i}+\ell_{i}\right]$ is moved independently towards the origin at a constant speed of $-c_{i}$ until the first time $\tau \in(0,1)$ at which two intervals touch (if there is only one interval set $\tau=1$). Thus,
\begin{align*}
    S^{t} J=\cup_{i=1}^{m}\left((1-t) c_{i}+\left[-\ell_{i}, \ell_{i}\right]\right), \quad t \in[0, \tau].
\end{align*}
If $\tau<1$, this means that at time $\tau$ the number of intervals $m^{\prime}$ in $S^{\tau} J=\cup_{i=1}^{m^{\prime}}[c_{i}^{\prime}-\ell_{i}^{\prime}, c_{i}^{\prime}+\ell_{i}^{\prime}]$ has decreased, and we recursively set
\begin{align*}
    S^{t} J=S^{\frac{t-\tau}{1-\tau}}\left(S^{\tau} J\right), \quad t \in[\tau, 1].
\end{align*}

Second, this one-dimensional construction is extended fiberwise to the setting of a $u$-multi-graphical compact set $K$ as follows. Let $\Omega_{\infty}$ be the open
subset of $P_{u^{\perp}}K$ given by Definition $\ref{multi}$. The \emph{continuous Steiner symmetrization} $\{S^t_uK\}_{t\in[0,1]}$ of $K$ is defined by
    $$
\mathring{S}^t_uK:=\mathsmaller{\bigcup}\limits_{y \in \Omega_{\infty}} S^{t}\left(K \cap L_{u}^{y}\right)\quad\text{and}\quad S_{u}^{t} K:=\operatorname{cl}\big(\mathring{S}_{u}^{t} K\big),\quad \forall \thinspace t\in[0,1] .
$$

It is worth mentioning that $\{S_{u}^{t} K\}_{t\in[0,1]}$ does not depend on the particular choice of $\Omega_{\infty}$ in its $u$-multi-graphical representation. See \cite[Proposition 4.8]{MSY} for the proof. Now we collect some fundamental properties on CSS, established in \cite[Corollary 4.6, Corollary 4.10, Corollary 4.12, Proposition 5.8 and Corollary 5.9]{MSY}.

\begin{lem}
\label{Continuous Steiner symmetrization}
    Let $K$ be a $u$-multi-graphical compact set with $u\in \s$, and let $\Omega_{\infty}$ be the open subset of $P_{u^{\perp}}K$ given by Definition $\ref{multi}$. The following properties hold.
    \begin{enumerate}[left=0pt, label={\rm(\arabic*)}, align=hang]
        \item If  $y_k \to y \in \Omega_{\infty}$, then  $S^t(K\cap L^{y_k}_u) \to S^t(K\cap L^{y}_u)$ in the Hausdorff metric for $t\in[0,1]$.
        \item For $y \in \Omega_{\infty}$ and $t \in[0,1], S_{u}^{t} K \cap L_{u}^{y}=\mathring{S}_{u}^{t} K \cap L_{u}^{y}$.
        \item For $t \in [0, 1]$, $V_n(S^t_uK)=V_n(\mathring{S}^t_uK)=V_n(K)$.
        \item If $K$ is a Lipschitz star body, then $S^t_u K$ is a Lipschitz star body for $t\in[0,1]$.
        \item If $K$ is a Lipschitz star body, then $S^0_u K=K$.
    \end{enumerate}
\end{lem}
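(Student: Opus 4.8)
The plan is to treat the five assertions in the listed order, since (2)--(5) all rest on a careful analysis of the one-dimensional operator $S^t$ acting on a finite disjoint union of closed intervals $J=\cup_{i=1}^{m}[c_i-\ell_i,c_i+\ell_i]$. From the explicit construction I would first record two facts. \emph{Length preservation:} $\mathcal H^1(S^tJ)=\mathcal H^1(J)$ for all $t\in[0,1]$; this is immediate for $t$ below the first touching time $\tau$ (each interval just translates, keeping length $2\ell_i$), and it propagates through the recursion $S^tJ=S^{(t-\tau)/(1-\tau)}(S^\tau J)$ because two abutting intervals fuse into one of the combined length. \emph{Joint Hausdorff continuity:} the map $(J,t)\mapsto S^tJ$ is continuous into the compact subsets of $\mathbb R$; on $t<\tau(J)$ this is clear from $S^tJ=\cup_i\bigl((1-t)c_i+[-\ell_i,\ell_i]\bigr)$, and one runs an induction on $m$, the point being that near a touching time the Hausdorff distance is insensitive to whether two nearly-abutting intervals have already merged, so dropping the combinatorial type creates no discontinuity. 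This bookkeeping around touching times is the genuine technical obstacle; everything downstream is Fubini and point-set topology.

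Assertion (1) follows by composition: on each $\Omega_m$ the endpoint functions $f_i,g_i$ are continuous by Definition~\ref{multi}, so $y\mapsto K\cap L_u^y$ is Hausdorff-continuous into compact subsets of the line, and precomposing with the continuous map above yields $S^t(K\cap L_u^{y_k})\to S^t(K\cap L_u^y)$ when $y_k\to y\in\Omega_\infty$. For (2), $\mathring S_u^tK\cap L_u^y\subseteq S_u^tK\cap L_u^y$ is trivial; conversely, if $z\in S_u^tK\cap L_u^y$ with $y\in\Omega_\infty$, write $z=\lim z_k$ with $z_k\in\mathring S_u^tK$, so $z_k\in S^t(K\cap L_u^{y_k})$ with $y_k=P_{u^\perp}z_k\to P_{u^\perp}z=y\in\Omega_\infty$; by (1) the sets $S^t(K\cap L_u^{y_k})$ converge to the closed set $S^t(K\cap L_u^y)$, so $z$ lies in it, i.e. in $\mathring S_u^tK\cap L_u^y$. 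For (3), Fubini along the fibres of $P_{u^\perp}$ combined with length preservation gives $V_n(\mathring S_u^tK)=\int_{\Omega_\infty}\mathcal H^1(S^t(K\cap L_u^y))\,dy=\int_{\Omega_\infty}\mathcal H^1(K\cap L_u^y)\,dy=V_n(K)$, the last equality because $\mathcal H^{n-1}(P_{u^\perp}K\setminus\Omega_\infty)=0$; and $V_n(S_u^tK)=V_n(\mathring S_u^tK)$ since by (2) the two sets share their fibres over $\Omega_\infty$, so their difference projects into the null set $P_{u^\perp}K\setminus\Omega_\infty$.

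For (4), Lipschitz regularity of $\rho_K$ makes $K$ $u$-multi-graphical with Lipschitz endpoint functions, and since the one-dimensional operator is a composition of translations and merges it is $1$-Lipschitz in the data, so the endpoint functions of $S_u^tK$ remain Lipschitz and uniformly bounded; positivity of the radial function survives because the central fibre $K\cap L_u^o$ has length $\rho_K(u)+\rho_K(-u)>0$ and $S^t$ never expels $0$ from an interval that originally contained it. The substantive point is star-shapedness of $S_u^tK$ about the origin: because $K$ is star-shaped, the fibre over $\lambda y$ contains $\lambda$ times the fibre over $y$ for $\lambda\in[0,1]$; for a single fibre-interval $[f(y),g(y)]$ this forces $f(\lambda y)\le\lambda f(y)\le\lambda g(y)\le g(\lambda y)$, and writing $a=f(\lambda y)-\lambda f(y)\le 0$, $b=g(\lambda y)-\lambda g(y)\ge 0$, centre $c=(f+g)/2$, radius $\ell=(g-f)/2$, the triangle inequality gives $|c(\lambda y)-\lambda c(y)|=|a+b|/2\le(b-a)/2=\ell(\lambda y)-\lambda\ell(y)$; since $0\le 1-t\le 1$ this yields that the $S^t$-fibre over $\lambda y$ contains $\lambda$ times the $S^t$-fibre over $y$, so $S_u^tK$ is star-shaped, and the multi-interval case runs the same way through the recursion (this is the main obstacle in (4)). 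Finally (5) is the degenerate case $t=0$: $S^0J=J$, so $\mathring S_u^0K=K\cap P_{u^\perp}^{-1}(\Omega_\infty)$, which is dense in $K$ because $\Omega_\infty$ is open and, having a relatively closed null complement in $\mathrm{int}(P_{u^\perp}K)$, dense in $P_{u^\perp}K$; taking closures gives $S_u^0K=K$. These five statements are precisely \cite[Corollaries 4.6, 4.10, 4.12, Proposition 5.8 and Corollary 5.9]{MSY}, so in the paper I would simply invoke them, the above being the mechanism behind each.
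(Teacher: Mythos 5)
Your plan---simply invoking \cite[Corollaries 4.6, 4.10, 4.12, Proposition 5.8 and Corollary 5.9]{MSY}---matches the paper exactly: this lemma is a collection of cited facts, not something the paper re-proves. Your accompanying mechanism sketch is mostly sound, but item (4) has a genuine gap where you assert that the multi-interval case ``runs the same way through the recursion.'' The centre/radius estimate you give for a single interval does not propagate by fiat, because the fibre over $y$ and the fibre over $\lambda y$ can have different numbers of components merging at different touching times, so the two recursions are not synchronized and there is no single pair of intervals to which the estimate applies. What one actually needs is that the one-dimensional operator $S^t$ is monotone with respect to inclusion and commutes with positive dilation (so that the fibre inclusion $K\cap L_u^{\lambda y}\supseteq \lambda\bigl(K\cap L_u^{y}\bigr)$ passes through $S^t$ regardless of the interval combinatorics); neither of these is obvious across merges, and this is precisely what the argument behind \cite[Prop.~5.8]{MSY} has to handle. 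Since you ultimately defer to \cite{MSY}, the paper itself is unaffected, but the sketch for (4) would not stand on its own.
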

The following multi-graphical property of Lipschitz star bodies is included in Theorem 5.7 in \cite{MSY}, shown
by Lin and Xi \cite[Lemma 2.2, Sect. 3 and Theorem 4.1]{Lin2}.
\begin{thm}
\label{almost direction}
    If $K$ is a Lipschitz star body in $\mathbb{R}^{n}$, then there exists a Lebesgue measurable set $U(K) \subseteq \mathbb{S}^{n-1}$ of full measure such that  $K$ is $u$-multi-graphical for all $u \in U(K)$.

\end{thm}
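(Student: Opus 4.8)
The plan is to read off the multi-graphical structure from the polar description of the fibers of $K$ in a fixed direction $u$: show that for almost every $u$ every line parallel to $u$ crosses $\partial K$ transversally, deduce that each such fiber is a finite disjoint union of closed intervals whose endpoints vary continuously in suitable open regions, and then bundle these intervals into the layered data $(\Omega_m, f_i, g_i)$ of Definition~\ref{multi}. First I would fix $u\in\s$, write $\sigma(x)=x/|x|$, and record for $y\in u^{\perp}\setminus\{o\}$ the formula
\[
K\cap L_u^y=\{\,y+su:\Phi(y+su)\le 0\,\},\qquad \Phi(z):=|z|^2-\rho_K^2(\sigma(z)),
\]
setting $F_{u,y}(s):=\Phi(y+su)$. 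Since $\sigma$ is smooth away from $o$ and $\rho_K$ is Lipschitz and positive, $\Phi$ (hence $F$) is Lipschitz in $(y,s)$ near any point with $y+su\ne o$; moreover $F_{u,y}(s)\to+\infty$ as $s\to\pm\infty$, so fibers are compact, $\{\Phi<0\}$ is the interior of $K$ near $\partial K$, and $\{\Phi=0\}=\partial K$. Because $\rho_K\circ\sigma$ is $0$-homogeneous, at every $x\in\partial K$ where $\rho_K$ is differentiable one has $\nabla\Phi(x)\cdot x=2|x|^2>0$, so $\nabla\Phi(x)\ne 0$ and the outer unit normal is $\nu(x)=\nabla\Phi(x)/|\nabla\Phi(x)|$. (The single line $y=o$ lies in a null set and is discarded.)

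Next I would establish that almost every direction is transversal. By Rademacher's theorem $\rho_K$ is differentiable $\mathcal{H}^{n-1}$-a.e.\ on $\s$, so $\partial K$ carries an a.e.-defined normal $\nu$; for each fixed $x\in\partial K$ the set $\{u\in\s:u\cdot\nu(x)=0\}$ is a great subsphere, hence $\mathcal{H}^{n-1}$-null, so by Fubini, for a.e.\ $u$ the tangency set $T_u:=\{x\in\partial K:u\cdot\nu(x)=0\}$ is $\mathcal{H}^{n-1}$-null in $\partial K$. Adjoining the Lipschitz image of the null non-differentiability set of $\rho_K$ and pushing forward by the $1$-Lipschitz map $P_{u^{\perp}}$, one obtains for a.e.\ $u$ an $\mathcal{H}^{n-1}$-null set $N_u\subseteq u^{\perp}$ such that for $y\in P_{u^{\perp}}K\setminus N_u$ the line $L_u^y$ meets $\partial K$ only at points $x=y+su$ where $\rho_K$ is differentiable and $u\cdot\nu(x)\ne 0$, i.e.\ where $F_{u,y}'(s)=\nabla\Phi(x)\cdot u\ne 0$. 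Such a zero of $F_{u,y}$ is isolated and a sign change, so by compactness of $\partial K$ there are finitely many, $s_1<\cdots<s_k$; since $F_{u,y}>0$ near $\pm\infty$ and its sign alternates across consecutive zeros, $k=2m(y)$ is even and $K\cap L_u^y=\bigcup_{j=1}^{m(y)}[\,y+s_{2j-1}u,\ y+s_{2j}u\,]$, a disjoint union of $m(y)$ closed intervals with non-touching endpoints. Thus the fiberwise part of Definition~\ref{multi} is already in place.

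It then remains to construct the layers. Since $F$ is jointly continuous and $F_{u,y_0}$ changes sign at each transversal crossing $s_i(y_0)$, the intermediate value theorem yields, for $y$ near $y_0$ in $P_{u^{\perp}}K\setminus N_u$, a crossing near each $s_i(y_0)$; hence $m(\cdot)$ is lower semicontinuous there, and where the crossing number is locally constant the $s_i(\cdot)$ vary continuously. I would take $\Omega_m$ to be the interior of $\{y:m(y)\ge m\}$ with the closure of $\{y:m(y)\ge m+1\}$ removed: these are disjoint open sets with $\mathcal{H}^{n-1}\big(P_{u^{\perp}}K\setminus\bigcup_m\Omega_m\big)=0$, on each of which the $2m$ endpoints extend to continuous functions that can be relabeled $f_1<g_1<\cdots<f_m<g_m$ and patched across the nested unions $\bigcup_{m\ge i}\Omega_m$ to produce exactly the required data. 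Setting $U(K)$ equal to the (measurable, full-measure) set of transversal directions from the previous paragraph then finishes the proof.

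The main obstacle is this last step. Because $\rho_K$ is merely Lipschitz rather than $C^1$, a transversal crossing gives no obvious $C^1$ — a priori not even evidently continuous — dependence of the endpoints on $y$, so one must both control how the crossing number can change on a positive-measure set and consistently assemble the local branches into the two global sequences $\{f_i\}$, $\{g_i\}$ on $\bigcup_{m\ge i}\Omega_m$. Making this rigorous, either through a Lipschitz implicit-function/selection argument or through a direct analysis exploiting the uniform Lipschitz bound on $\rho_K$, is the technical core, and is precisely what Lin and Xi carry out in the work cited in the statement.
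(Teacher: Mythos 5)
The paper does not itself prove Theorem~\ref{almost direction}; it cites Lin--Xi~\cite{Lin2} and its reproduction as Theorem 5.7 in~\cite{MSY}, so there is no in-text proof to compare against. Your opening moves are sound and standard: write $K\cap L_u^y$ as a sublevel set of $F_{u,y}(s)=|y+su|^2-\rho_K^2(\sigma(y+su))$, invoke Rademacher for a.e.\ differentiability of $\rho_K$, use $0$-homogeneity to see that the defining function has positive radial derivative so the outer normal $\nu$ exists $\mathcal{H}^{n-1}$-a.e.\ on $\partial K$, and run Fubini on $\{(x,u)\in\partial K\times\s : u\cdot\nu(x)=0\}$ to conclude that for a.e.\ $u$ the line $L_u^y$ meets $\partial K$ only at finitely many points of transversality for $\mathcal{H}^{n-1}$-a.e.\ $y\in u^{\perp}$, so that the fiber is a finite disjoint union of nondegenerate closed intervals. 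This is the correct reduction and matches the cited argument in spirit.

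The gap is exactly where you flag it, and it is genuine. Writing $A_k:=\{y\in P_{u^{\perp}}K\setminus N_u : m(y)\ge k\}$, the construction $\Omega_k:=\operatorname{int}A_k\setminus\overline{A_{k+1}}$ is not shown to satisfy $\mathcal{H}^{n-1}\bigl(P_{u^{\perp}}K\setminus\bigcup_k\Omega_k\bigr)=0$, and there are two concrete obstructions. First, $N_u$ (the projection of the tangency and non-differentiability sets) is a null Borel set but need not be closed; it could a priori be dense in $P_{u^{\perp}}K$, in which case $\operatorname{int}A_k=\emptyset$ for every $k$ and all your $\Omega_k$ are empty. Second, even when $N_u$ is not dense, lower semicontinuity of the integer-valued $m$ does not by itself prevent the boundaries $\partial A_k$ from carrying positive measure, yet your construction discards $\bigcup_k\partial A_k$. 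What must be shown is that any failure of $m$ to be locally constant is forced by a degeneracy of $\partial K$ (collision of adjacent endpoints, i.e.\ a tangency, or a non-differentiable point) whose trace lies in a controllable null set; that is precisely the technical content you defer to Lin--Xi, so the proposal is a correct reduction but not a complete proof. A minor side remark: since the $\Omega_k$ are pairwise disjoint open sets with $m\equiv k$ on $\Omega_k$, there is no ``patching'' issue for the continuity of $f_i,g_i$ on $\bigcup_{k\ge i}\Omega_k$ --- continuity on a disjoint union of open sets reduces to continuity on each piece --- so the real difficulty is entirely the full-measure claim, not the assembly of the layers.
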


\subsection{Admissible radial perturbations}\
\label{2.3}
\vskip5pt

For case $i\in\{1,2,\ldots,n-1\}$, we identify $I_i^2 K=cK$ as the Euler-Lagrange \mbox{equation for}
\begin{align*}
        \mathcal{F}_{c,i}(K):=\widetilde{V}_{i+1}(I_i K)-ci\widetilde{V}_{i+1}(K).
\end{align*}
For this aim,  we first make some preparations.

Given a function $f: J \rightarrow \mathbb{R}$ on an interval $J$ and $a\in J$, denote its \emph{lower right-hand derivative} at $t=a$ by $\underline{\frac{d}{dt}}f(t)\big|_{a^+}=\liminf_{t\to a^+}\frac{f(t)-f(a)}{t-a}$. If $f(t)$ is differentiable from the right at $t=a$,  denote its \emph{right-derivative} at $t=a$ by $\left.\frac{d}{d t} f(t)\right|_{a^{+}}$.

\begin{de}[{\cite[Definition 6.1]{MSY}}]
    Let $K$ be a star body in $\r$. A family
of star-shaped sets $\{K_t\}_{t\in[0,1]}$ is called an \emph{admissible radial perturbation}, if $K_0 = K$
and $\{[0, 1]\ni t\mapsto \rho_{K_t}(u)\}_{u\in\s}$ are a.e. equi-differentiable at $t = 0^+$ in the following sense.
\begin{enumerate}[left=0pt,  align=hang]
    \item  For almost every $u\in\s$, the following limit exists:
    \begin{align}
    \label{existence}
         \frac{d\rho_{K_t}(u)}{dt}\Big|_{0^+}=\lim_{t\to 0^+}\frac{\rho_{K_t}(u)-\rho_K(u)}{t}.
    \end{align}
    \item There exist $M > 0$ and $t_0\in(0,1]$ such that for almost every $u\in\s$,
    \begin{align}
    \label{bounded}
     \sup_{t\in(0,t_0]}\frac{|\rho_{K_t}(u)-\rho_K(u)|}{t}\leq M.
    \end{align}
\end{enumerate}
\end{de}

The following theorem is a straightforward adaptation of Proposition 6.2 in \cite{MSY}.

\begin{thm}[after Milman,  Shabelman  and Yehudayoff \cite{MSY}]
\label{Euler}
    Let $\{K_t\}_{t\in[0,1]}$ be an admissible radial perturbation of a star body $K$ in $\r$ and $i\in\{1,2,\ldots,n-1\}$. Then, denoting $f (u) := \frac{d\rho_{K_t}(u)}{dt}\big|_{0^+}$, the following derivatives exist and are given by
    \begin{align*}
        \frac{d\widetilde{V}_{i+1}(K_t)}{dt}\Big|_{0^+}&=\frac{i+1}{n}\int_{\s}\rho^i_K(u)f(u)du,
        \\
        \frac{d\widetilde{V}_{i+1}(I_i K_t)}{dt}\Big|_{0^+}&=\frac{i(i+1)}{n}\int_{\s}\rho_{I_i^2 K}(u) \rho^{i-1}_K(u)f(u)du.
    \end{align*}

    Consequently, $I_i^2K=c K$ if and only if $K$ is a stationary point of the functional $\mathcal{F}_{c,i}(K)$,
    meaning that $\frac{d\mathcal{F}_{c,i}(K_t)}{dt}\big|_{0^+}=0$ for any admissible radial perturbation $\{K_t\}_{t\in[0,1]}$.
\end{thm}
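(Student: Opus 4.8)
The plan is to compute the two right-derivatives directly from the integral formulas defining $\widetilde{V}_{i+1}(K_t)$ and $\widetilde{V}_{i+1}(I_iK_t)$, using the dominated convergence theorem to justify differentiation under the integral sign; the admissibility hypotheses \eqref{existence}--\eqref{bounded} are tailored precisely to supply the pointwise limit of the difference quotients and an integrable majorant, so the interchange is legitimate. For the first derivative, write $\widetilde{V}_{i+1}(K_t)=\frac{1}{n}\int_{\s}\rho_{K_t}^{i+1}(u)\,du$; since $\rho_{K_t}(u)\to\rho_K(u)$ and the difference quotients are uniformly bounded by $M$ for $t\in(0,t_0]$, the elementary identity $\frac{d}{dt}\rho_{K_t}^{i+1}\big|_{0^+}=(i+1)\rho_K^i f$ together with the mean value theorem (to bound $|\rho_{K_t}^{i+1}-\rho_K^{i+1}|/t$ by a constant times $M$, using continuity and positivity of $\rho_K$ on the compact sphere) yields $\frac{d}{dt}\widetilde{V}_{i+1}(K_t)\big|_{0^+}=\frac{i+1}{n}\int_{\s}\rho_K^i f\,du$.

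For the second derivative, I would expand
\[
\widetilde{V}_{i+1}(I_iK_t)=\frac{1}{n}\int_{\s}\rho_{I_iK_t}^{i+1}(u)\,du
=\frac{1}{n}\int_{\s}\Big(\tfrac{1}{n-1}\int_{\s\cap u^{\perp}}\rho_{K_t}^i(\theta)\,d\theta\Big)^{i+1}du,
\]
differentiate the inner Radon transform first (again by dominated convergence on the great subsphere $\s\cap u^{\perp}$, using $\frac{d}{dt}\rho_{K_t}^i\big|_{0^+}=i\rho_K^{i-1}f$), and then apply the chain rule $\frac{d}{dt}(\cdot)^{i+1}=(i+1)(\cdot)^i\frac{d}{dt}(\cdot)$ with $(\cdot)=\rho_{I_iK}(u)$. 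This produces
\[
\frac{d\widetilde{V}_{i+1}(I_iK_t)}{dt}\Big|_{0^+}
=\frac{i+1}{n}\int_{\s}\rho_{I_iK}^i(u)\cdot\frac{1}{n-1}\int_{\s\cap u^{\perp}}i\,\rho_K^{i-1}(\theta)f(\theta)\,d\theta\,du.
\]
Now I would recognize the double integral as $\int_{\s}\rho_{I_iK}^i(u)\,\R\!\big(\rho_K^{i-1}f\big)(u)\,du$ up to the factor $\frac{1}{n-1}$, and invoke the self-adjointness \eqref{symmetric} of the spherical Radon transform to move $\R$ onto $\rho_{I_iK}^i$: this converts the expression into $\int_{\s}\rho_K^{i-1}(\theta)f(\theta)\,\R\!\big(\rho_{I_iK}^i\big)(\theta)\,d\theta$, and since $\frac{1}{n-1}\R\!\big(\rho_{I_iK}^i\big)=\rho_{I_i(I_iK)}=\rho_{I_i^2K}$ by the very definition of $I_i$, we arrive at $\frac{i(i+1)}{n}\int_{\s}\rho_{I_i^2K}(u)\rho_K^{i-1}(u)f(u)\,du$, as claimed. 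Finally, combining the two formulas gives $\frac{d\mathcal{F}_{c,i}(K_t)}{dt}\big|_{0^+}=\frac{i(i+1)}{n}\int_{\s}\big(\rho_{I_i^2K}(u)-c\,\rho_K(u)\big)\rho_K^{i-1}(u)f(u)\,du$; if $I_i^2K=cK$ this vanishes for every admissible perturbation, and conversely, using that admissible perturbations realize an arbitrary (bounded measurable) $f$ — for instance the radial perturbations $\rho_{K_t}=\rho_K+t\varphi$ for $\varphi\in C(\s)$ — the vanishing of the integral for all $f$ forces $\rho_{I_i^2K}=c\rho_K$ a.e., hence everywhere by continuity.

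The main obstacle I anticipate is the rigorous justification of differentiation under the integral sign for the \emph{composite} functional $\widetilde{V}_{i+1}(I_iK_t)$: one must control the difference quotient of $u\mapsto\rho_{I_iK_t}^{i+1}(u)$ uniformly, which requires knowing that $\rho_{I_iK_t}$ stays bounded below and above uniformly in small $t$ (so that the chain-rule remainder is uniformly small) — this follows from \eqref{bounded} and the continuity/positivity of $\rho_K$, but needs to be stated carefully, and the passage of the $t$-derivative through the \emph{two} nested integrals (over $\s\cap u^{\perp}$ inside, over $\s$ outside) should be organized so that Fubini and dominated convergence are applied in the right order. Everything else is bookkeeping: the self-adjointness step and the identification $\frac{1}{n-1}\R(\rho_{I_iK}^i)=\rho_{I_i^2K}$ are immediate from the definitions recalled in the excerpt.
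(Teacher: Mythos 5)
Your proposal is correct and follows essentially the same route as the paper: differentiate $\widetilde{V}_{i+1}(K_t)$ and $\widetilde{V}_{i+1}(I_iK_t)$ under the integral sign via dominated convergence (using the uniform bound from admissibility to control $|\rho_{K_t}^m-\rho_K^m|/t$), then apply the self-adjointness \eqref{symmetric} of $\R$ and the identity $\tfrac{1}{n-1}\R(\rho_{I_iK}^i)=\rho_{I_i^2K}$ to obtain the stated formulas, and for the converse choose a perturbation $\rho_{K_t}=\rho_K+\varepsilon t(\rho_{I_i^2K}-c\rho_K)$ to force the integrand to have a definite sign. The only cosmetic difference is that the paper fixes a specific test perturbation with a small $\varepsilon$ (to keep $\rho_{K_t}>0$ on $[0,1]$) rather than appealing to the fundamental lemma of the calculus of variations, but the content is identical.
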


\begin{proof}
    By $\eqref{existence}$, the derivative $f (u) = \frac{d\rho_{K_t}(u)}{dt}\big|_{0^+}$ exists for almost every $u\in\s$ and thus is  Lebesgue measurable. Moreover, $f\in L^{\infty}(\s)$ by $\eqref{bounded}$.

    Since $K \subseteq B_n(r)$ for some $r>0$, $\eqref{bounded}$ implies that $\sup_{t\in(0,t_0]} \rho_{K_t}(u) \leq M + r$ for a.e. $u\in\s$. So, invoking $\eqref{bounded}$ again, we see that for all $m\geq 1$ and a.e. $u\in\s$,
    \begin{align*}
        \sup_{t\in(0,t_0]}\frac{|\rho_{K_t}^m(u)-\rho_K^m(u)|}{t}\leq C_{r,m,M}
    \end{align*}
    for some $C_{r,m,M} > 0$. Thus, by the Lebesgue dominant convergence theorem, we have
    \begin{align*}
        \frac{d\widetilde{V}_{i+1}(K_t)}{dt}\Big|_{0^+}=\frac{d}{dt}\Big(\frac{1}{n}\int_{\s}\rho^{i+1}_{K_t}(u)\thinspace du\Big)\Big|_{0^+}=\frac{i+1}{n}\int_{\s}\rho^i_K(u)f(u)du.
    \end{align*}

    Similarly, by $\eqref{symmetric}$, we have
    \begin{align*}
        \frac{d\widetilde{V}_{i+1}(I_i K_t)}{dt}\Big|_{0^+}&=\frac{d}{dt}\Big(\frac{1}{n}\int_{\s}\big(\frac{1}{n-1}\int_{\s\cap u^{\perp}}\rho^i_{K_t}(\theta)\thinspace d\theta\big)^{i+1}du\Big)\Big|_{0^+}
        \\
        &=\frac{i+1}{n}\int_{\s}\rho^i_{I_i K}(u)\big(\frac{i}{n-1}\int_{\s\cap u^{\perp}}\rho^{i-1}_{K}(\theta)f(\theta)\thinspace d\theta\big)\thinspace du
        \\
        &=\frac{i(i+1)}{(n-1)n}\int_{\s}\rho^i_{I_i K}(u)\R(\rho^{i-1}_{K}f)(u)\thinspace du
        \\
        &=\frac{i(i+1)}{(n-1)n}\int_{\s}\R(\rho_{I_i K}^i)(u)\rho^{i-1}_K(u)f(u)\thinspace du\\
        &=\frac{i(i+1)}{n}\int_{\s}\rho_{I_i^2 K}(u) \rho^{i-1}_K(u)f(u)\thinspace du.
    \end{align*}

    Thus,
    \begin{align*}
        \frac{d\mathcal{F}_{c,i}(K_t)}{dt}\Big|_{0^+}=\frac{i(i+1)}{n}\int_{\s}\rho^{i-1}_K(u)f(u)\big(\rho_{I_i^2 K}(u)-c\rho_K(u)\big)\thinspace du,
    \end{align*}
     which implies that $\frac{d\mathcal{F}_{c,i}(K_t)}{dt}\big|_{0^+}=0$ if $I_i^2K = cK$.

     Assume $I_i^2K \neq cK$. Define $\{K_t\}_{t\in[0,1]}$  by $\rho_{K_t}=\rho_K+\varepsilon t (\rho_{I_i^2 K}-c\rho_K)$ for a sufficiently small $\varepsilon>0$. Then $\{K_t\}_{t\in[0,1]}$ is an admissible radial perturbation and $f=\varepsilon  (\rho_{I_i^2 K}-c\rho_K)$, which results in that $\frac{d\mathcal{F}_{c,i}(K_t)}{dt}\big|_{0^+}>0$. It is a contradiction.
\end{proof}

By Theorem \ref{Euler},  observe that if $I_i^2K=c K$ and
$\frac{d\widetilde{V}_{i+1}(K_t)}{dt}\big|_{0^+}=0$, then $\frac{d\widetilde{V}_{i+1}(I_i K_t)}{dt}\big|_{0^+}=0$.

The following Lemma, which was  established by   Milman, Shabelman and Yehudayoff \cite[Proposition 6.3]{MSY}, is crucial and bridges CSS and admissible radial perturbations.

\begin{lem}
\label{admissible}
 Let $K$ be a Lipschitz star body in $\r$ and $u \in U(K)$, where $U(K) \subseteq \s$ is given by Theorem $\ref{almost direction}$. Then $\{ S^t_u K\}_{t\in[0,1]}$ is an
admissible radial perturbation of $K$.
\end{lem}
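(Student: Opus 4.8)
The plan is to check, for the family $\{S^t_uK\}_{t\in[0,1]}$, the two requirements \eqref{existence} and \eqref{bounded} in the definition of an admissible radial perturbation; the identity $K_0=K$ is exactly Lemma \ref{Continuous Steiner symmetrization}(5). Fix $u\in U(K)$ and, via Theorem \ref{almost direction} and Definition \ref{multi}, the $u$-multi-graphical data $\Omega_\infty,(f_i),(g_i)$ of $K$; for $y\in\Omega_\infty$ write $J_y=\{s\in\mathbb{R}:y+su\in K\}=\bigcup_i[f_i(y),g_i(y)]$ with centers $c_i(y)$ and half-lengths $\ell_i(y)$, and fix $R>0$ with $K\subseteq B_n(R)$. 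First I would record some elementary properties of the CSS on the fibers. Since every interval of $J_y$ lies in $[-R,R]$ we have $|c_i(y)|\le R$, so in the one-dimensional construction each interval---and, after each of the finitely many merges, each current interval---is translated towards the origin at speed at most $R$; an induction on the merge times gives $S^tJ_y\subseteq J_y+[-Rt,Rt]$, $J_y\subseteq S^tJ_y+[-Rt,Rt]$ and $S^tJ_y\subseteq[-R,R]$. Passing to unions over $y\in\Omega_\infty$ and to closures, this yields $d_H(S^t_uK,K)\le Rt$ for $t\in[0,1]$ (Hausdorff metric) and $S^t_uK\subseteq B_n(\sqrt2\,R)$. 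Moreover, if $B_n(\delta_0)\subseteq K$ then for $|y|<\delta_0$ the component of $J_y$ containing the origin satisfies $|c_i(y)|\le\ell_i(y)$, a property preserved by the one-dimensional CSS and by merging, so it keeps containing $[-\sqrt{\delta_0^2-|y|^2},\sqrt{\delta_0^2-|y|^2}\,]$ for all $t$; hence $B_n(\delta_0)\subseteq S^t_uK$, and altogether $\delta_0\le\rho_{S^t_uK}\le\sqrt2\,R$ uniformly in $t$.

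To get \eqref{bounded} I would combine these facts with the uniform-in-$t$ Lipschitz control on $\rho_{S^t_uK}$ provided by the CSS theory for Lipschitz star bodies (Lemma \ref{Continuous Steiner symmetrization}(4) and \cite{Lin2}): before merging the evolved bounding graphs are $g_i^t(y)=g_i(y)-t\,c_i(y)$ and $f_i^t(y)=f_i(y)-t\,c_i(y)$, whose slopes are dominated by convex combinations of the slopes of $g_i,f_i$ and so never increase. For star bodies that contain a common ball and share a Lipschitz bound, the sup-norm distance of radial functions is at most a fixed multiple of the Hausdorff distance; with $d_H(S^t_uK,K)\le Rt$ this gives $\|\rho_{S^t_uK}-\rho_K\|_\infty\le Mt$ for $t\in(0,t_0]$, with $t_0=\delta_0/R$ and $M=M(R,\delta_0,K)$. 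That is \eqref{bounded}.

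For \eqref{existence} I would work in a fixed direction. The slice $\{v\cdot u=0\}$ is null, and $v\cdot u<0$ is handled like $v\cdot u>0$, so fix $v\in\s$ with $v\cdot u>0$ at which $\rho_K$ is differentiable (a.e.\ such $v$, by Rademacher). Set $y_0=\rho_K(v)P_{u^\perp}v$ and let $i_0$ index the interval of $J_{y_0}$ through whose top the radial ray leaves $K$, so $\rho_K(v)(v\cdot u)=g_{i_0}(y_0)$; after a routine further reduction to a full-measure set of $v$ (the correspondences $v\leftrightarrow y_0$ are bi-Lipschitz off a negligible set, so null sets pull back to null sets) we may assume $\partial K$ is classically differentiable, with non-vertical tangent, at the top and bottom of that interval, i.e.\ $g_{i_0},f_{i_0}$ are differentiable at $y_0$. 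Differentiability of $\rho_K$ at $v$ forces the radial line $\mathbb{R}v$ to be transversal to $\partial K$ at the exit point, i.e.\ $(v\cdot u)-\nabla g_{i_0}(y_0)\cdot P_{u^\perp}v\ne0$. For $t$ small enough that no merge has occurred in the fibers over $rP_{u^\perp}v$ with $r$ near $\rho_K(v)$, $\rho_{S^t_uK}(v)$ is the solution $r$ of $r(v\cdot u)=g_{i_0}(rP_{u^\perp}v)-t\,c_{i_0}(rP_{u^\perp}v)$, and the implicit function theorem (in the form valid at a point of differentiability) shows that $t\mapsto\rho_{S^t_uK}(v)$ has a right-derivative at $0$,
\[
    \frac{d\rho_{S^t_uK}(v)}{dt}\Big|_{0^+}=\frac{-\,c_{i_0}(y_0)}{(v\cdot u)-\nabla g_{i_0}(y_0)\cdot P_{u^\perp}v}\,,
\]
which is \eqref{existence}.

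The main obstacle is \eqref{bounded}. The bound $d_H(S^t_uK,K)\le Rt$ alone is not enough, since for general star bodies a small Hausdorff perturbation need not be a small sup-norm radial perturbation; it has to be combined with the uniform ball-containment and, above all, with the uniform-in-$t$ Lipschitz control on $\rho_{S^t_uK}$, and one must check that this control does not degenerate across the merge times, where the fiber structure changes. By contrast the transversality used for \eqref{existence} is automatic at every point where $\rho_K$ is differentiable, so once the measure-theoretic bookkeeping is in place \eqref{existence} is comparatively routine.
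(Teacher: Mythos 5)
The paper does not actually prove this lemma: it simply records it and cites \cite[Proposition 6.3]{MSY}, so there is no in-paper argument to compare your proposal against. Judged on its own terms, your outline follows a natural and plausible strategy (Hausdorff control plus uniform Lipschitz bounds for \eqref{bounded}, an implicit-function-theorem computation in the exit fiber for \eqref{existence}), and the preliminary estimates $d_H(S^t_uK,K)\le Rt$, $B_n(\delta_0)\subseteq S^t_uK\subseteq B_n(\sqrt2 R)$, and the sign/transversality analysis at a point of differentiability of $\rho_K$ are all sound. But two steps are asserted rather than proved.

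For \eqref{bounded} you correctly note that a uniform-in-$t$ Lipschitz bound on $\rho_{S^t_uK}$ is needed, and your slope argument does control the Lipschitz constants of the fiberwise graph functions $f_i^t,g_i^t$. However, passing from a uniform Lipschitz bound on the multi-graph data to a uniform Lipschitz bound on the radial function $\rho_{S^t_uK}$ on $\s$ is not a soft step: it degenerates near the silhouette $\{v\cdot u=0\}$, and Lemma \ref{Continuous Steiner symmetrization}(4) only says each $S^t_uK$ is a Lipschitz star body, not with a $t$-independent constant. You have acknowledged this as "the main obstacle," but the write-up does not close it. More seriously, \eqref{existence} has an unacknowledged gap: the equation $r(v\cdot u)=g_{i_0}(rP_{u^\perp}v)-t\,c_{i_0}(rP_{u^\perp}v)$ only describes $\rho_{S^t_uK}(v)$ so long as the fibers over $rP_{u^\perp}v$, for the relevant $r$'s, have undergone no merge by time $t$. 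The first merge time $\tau_1(y)$ is positive for each fixed $y\in\Omega_\infty$, but it need not be lower semicontinuous in $y$: a small new interval can appear in nearby fibers with an arbitrarily small gap to an existing one, driving $\tau_1$ down. Without a uniform lower bound on $\tau_1$ along $r\mapsto rP_{u^\perp}v$ near $\rho_K(v)$ --- or an argument that the moving exit point stays inside a merge-free region --- the implicit function theorem does not directly apply, and the existence of the one-sided derivative is not established. The "routine further reduction" to a full-measure set of $v$ (bi-Lipschitz correspondence $v\leftrightarrow y_0$, differentiability of $g_{i_0}$, stability of the multi-graph index $i_0$) also rests on the fine structure of Lipschitz multi-graphs from \cite{MSY}/\cite{Lin2}, which should be cited or reproduced rather than dismissed as bookkeeping.
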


\vskip 20pt
\section{\bf The proof of main results}
\label{3}
\vskip 10pt

Along the clue of Theorem \ref{Euler}, to character the star body $K$ satisfying the equation $I_i^2K=cK$ for $i\in\{1,2,\ldots,n-2\}$,  we need to \emph{construct} an admissible radial perturbation $\{K_t\}_{t\in[0,1]}$ of $K$, and then to analyze  the functional $\mathcal{F}_{c,i}(K_t)=\widetilde{V}_{i+1}(I_i K_t)-ci\widetilde{V}_{i+1}(K_t).$

\subsection{New admissible radial perturbations for  Lipschitz star body \texorpdfstring{$K$}{}}\
\label{3.1}
\vskip 5pt

  Unlike the classical volume functional $V_n$ (since $\widetilde{V}_{n}=V_n$),  for $i\in\{1,2,\ldots,n-2\}$ and the continuous Steiner symmetrization $\{S^t_u K\}_{t \in [0,1]}$,  $\widetilde{V}_{i+1}(S^t_u K)$ usually  is not equal to $\widetilde{V}_{i+1}(K)$, even if $K$ is  convex.

 Indeed, let $K$ be a convex body in $\r$ with the origin in its interior and $u\in\s$. Assume $K$ is not symmetric with respect to $u^{\perp}$. Then $K$ is a $u$-multi-graphical set and
\begin{align*}
    &\quad\enspace\widetilde{V}_{i+1}(S^t_u K)=\frac{1}{n}\int_{\s}\rho_{S^t_u K}^{i+1}(v)\thinspace dv
   =\frac{i+1}{n}\int_{S^t_u K}|x|^{i+1-n}\thinspace dx
    \\
    &=\frac{i+1}{n}\int_{P_{u^{\perp}}K}\int_{S^t_u K\cap L^y_u}|y+su|^{i+1-n}\thinspace dsdy=\frac{i+1}{n}\int_{\Omega_{\infty}}\int_{S^t(K\cap L_u^y)}|y+su|^{i+1-n}\thinspace dsdy,
\end{align*}
where $\Omega_{\infty}$ is the open subset of $P_{u^{\perp}}K$ given by Definition $\ref{multi}$. Since $i+1-n< 0$, it follows that $\widetilde{V}_{i+1}(S^t_u K)$ is strictly increasing in $t\in[0,1]$ by the last equality.

Let $K$ be a star body in $\r$ and $q\in \mathbb{R}$. The \emph{power-body} $\langle K^{q}\rangle$ of $K$ is defined as the star body in $\r$ by
\begin{align*}
    \rho_{\langle K^{q}\rangle}(u)=\rho_K^{q}(u),\quad\forall\thinspace u\in\s.
\end{align*}
In the following, we verify that the family $\{K_t := \langle (S^t_{u} \langle K^{\frac{i+1}{n}}\rangle)^{\frac{n}{i+1}}\rangle\}_{t\in[0,1]}$ is an admissible radial perturbation, which satisfies $\widetilde{V}_{i+1}(K_t)=\widetilde{V}_{i+1}(K)$, $t\in[0,1]$.

\begin{lem}
\label{again admissible}
    Let $K$ be a Lipschitz star body in $\r$,   $i\in\{1,\ldots,n-2\}$ and  $u \in U(\langle K^{\frac{i+1}{n}}\rangle)$, where $U(\langle K^{\frac{i+1}{n}}\rangle) \subseteq \s$ is given by Theorem $\ref{almost direction}$. Then the family
    \begin{align*}
        \{K_t := \langle (S^t_{u} \langle K^{\frac{i+1}{n}}\rangle)^{\frac{n}{i+1}}\rangle\}_{t\in[0,1]}
    \end{align*}
    is an admissible radial perturbation of $K$, which satisfies $\widetilde{V}_{i+1}(K_t)=\widetilde{V}_{i+1}(K)$, $t\in[0,1]$.

    In addition, if $I_i^2K=c K$ for some $c>0$, then $\frac{d\widetilde{V}_{i+1}(I_i K_t)}{dt}\big|_{0^+}=0$.

\end{lem}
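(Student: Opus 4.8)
The plan is to verify the three assertions in order: (i) $\{K_t\}_{t\in[0,1]}$ is a well-defined family of star-shaped sets with $K_0=K$; (ii) it is an admissible radial perturbation, i.e., the equi-differentiability conditions \eqref{existence} and \eqref{bounded} hold; (iii) $\widetilde{V}_{i+1}(K_t)=\widetilde{V}_{i+1}(K)$ for all $t$; and finally (iv) the last sentence follows by combining (ii), (iii) and Theorem \ref{Euler}.

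First I would record the radial-function formula. Since $\langle K^{\frac{i+1}{n}}\rangle$ is a Lipschitz star body (raising a positive Lipschitz function to the power $\frac{i+1}{n}\in(0,1)$ preserves Lipschitz continuity on a set bounded away from the origin), Theorem \ref{almost direction} applies and $\langle K^{\frac{i+1}{n}}\rangle$ is $u$-multi-graphical for $u\in U(\langle K^{\frac{i+1}{n}}\rangle)$. By Lemma \ref{Continuous Steiner symmetrization}(4)-(5), $S^t_u\langle K^{\frac{i+1}{n}}\rangle$ is again a Lipschitz star body for each $t\in[0,1]$ and equals $\langle K^{\frac{i+1}{n}}\rangle$ at $t=0$; hence $K_t=\langle(S^t_u\langle K^{\frac{i+1}{n}}\rangle)^{\frac{n}{i+1}}\rangle$ is a well-defined star body with $\rho_{K_t}(v)=\rho_{S^t_u\langle K^{\frac{i+1}{n}}\rangle}(v)^{\frac{n}{i+1}}$ and $K_0=K$.

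For admissibility, the idea is to transfer the known admissibility of $\{S^t_u L\}_{t\in[0,1]}$ for the Lipschitz star body $L:=\langle K^{\frac{i+1}{n}}\rangle$ (Lemma \ref{admissible}) through the smooth, monotone map $s\mapsto s^{n/(i+1)}$. Set $g(v):=\frac{d\rho_{S^t_uL}(v)}{dt}\big|_{0^+}$, which exists a.e.\ on $\s$ and lies in $L^\infty(\s)$ by Lemma \ref{admissible}. Since $\rho_{K_t}(v)=\rho_{S^t_uL}(v)^{n/(i+1)}$ and $\rho_L(v)=\rho_K(v)^{(i+1)/n}$ is bounded above and below by positive constants (uniformly in $t$ on $[0,t_0]$, using the analogue of the bound $\sup_t\rho_{S^t_uL}\le M+r$ derived in the proof of Theorem \ref{Euler}), the chain rule gives, for a.e.\ $v$,
\begin{align*}
    \frac{d\rho_{K_t}(v)}{dt}\Big|_{0^+}=\frac{n}{i+1}\,\rho_L(v)^{\frac{n}{i+1}-1}\,g(v)=\frac{n}{i+1}\,\rho_K(v)^{1-\frac{i+1}{n}}\,g(v),
\end{align*}
so \eqref{existence} holds; and the uniform Lipschitz bound for $s\mapsto s^{n/(i+1)}$ on the compact interval containing all the values $\rho_{S^t_uL}(v)$ (bounded away from $0$ and $\infty$) together with \eqref{bounded} for $L$ yields \eqref{bounded} for $\{K_t\}$.

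Third, the volume invariance: by the dual-volume formula, the definition of the power-body, the substitution to Cartesian coordinates, and the fiber decomposition exactly as in the displayed computation preceding this lemma,
\begin{align*}
    \widetilde{V}_{i+1}(K_t)=\frac{1}{n}\int_{\s}\rho_{S^t_uL}(v)^{n}\,dv=\frac{1}{n}\int_{\s}\rho_{\langle L^{n/(i+1)}\rangle}(v)^{i+1}\cdot\frac{(i+1)/n}{(i+1)/n}\,dv,
\end{align*}
but more transparently: $\widetilde{V}_{i+1}(K_t)=\frac1n\int_\s\rho_{S^t_uL}^n\,dv = \widetilde V_n(S^t_uL)=V_n(S^t_uL)=V_n(L)=\widetilde V_n(L)=\frac1n\int_\s\rho_K^{i+1}\,dv=\widetilde{V}_{i+1}(K)$, where the key equality $V_n(S^t_uL)=V_n(L)$ is Lemma \ref{Continuous Steiner symmetrization}(3) and $\widetilde V_n=V_n$. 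Finally, if $I_i^2K=cK$, then since $\{K_t\}$ is admissible with $\widetilde V_{i+1}(K_t)\equiv\widetilde V_{i+1}(K)$, we get $\frac{d\widetilde V_{i+1}(K_t)}{dt}\big|_{0^+}=0$, and by the observation recorded right after Theorem \ref{Euler} this forces $\frac{d\widetilde V_{i+1}(I_iK_t)}{dt}\big|_{0^+}=0$.

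The main obstacle is the bookkeeping in step (ii): one must ensure the derivative $g$ and the multiplicative factors stay uniformly controlled (bounded, and bounded away from zero) as $t\to0^+$, so that the chain rule and the $L^\infty$-bound genuinely transfer; this is where one reuses, rather than reproves, the estimates $\sup_{t\in(0,t_0]}\rho_{S^t_uL}\le M+r$ and the Lipschitz property of $S^t_uL$ from Lemma \ref{admissible} and Lemma \ref{Continuous Steiner symmetrization}. Everything else is a direct substitution or an appeal to an already-stated lemma.
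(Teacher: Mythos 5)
Your proof is correct and follows essentially the same route as the paper's: observe that $\langle K^{\frac{i+1}{n}}\rangle$ is a Lipschitz star body, invoke Lemma~\ref{admissible} to get admissibility of the CSS of $\langle K^{\frac{i+1}{n}}\rangle$, transfer it to $\{K_t\}$ via the $C^1$ map $x\mapsto x^{n/(i+1)}$, compute $\widetilde V_{i+1}(K_t)=V_n(S^t_u\langle K^{\frac{i+1}{n}}\rangle)=V_n(\langle K^{\frac{i+1}{n}}\rangle)$ by Lemma~\ref{Continuous Steiner symmetrization}(3), and conclude via Theorem~\ref{Euler}. You spell out the chain-rule bookkeeping (uniform Lipschitz bound for the power map on the range of the radial functions) more explicitly than the paper does, which is a sound refinement; note only that since $n/(i+1)>1$ one needs the radial functions bounded above but not bounded away from zero.
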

\begin{proof}
    Since $K$ is a Lipschitz star body, it follows that  $\langle K^{\frac{i+1}{n}}\rangle$ is also a Lipschitz star body, and hence $\{S^t_u \langle K^{\frac{i+1}{n}}\rangle\}_{t\in[0,1]}$ is an admissible radial perturbation of $\langle K^{\frac{i+1}{n}}\rangle$ by Lemma $\ref{admissible}$. Combining the fact that $x\mapsto x^{\frac{n}{i+1}}$ is continuously differentiable on $\mathbb{R}_+$, it follows that $\{K_t\}_{t\in[0,1]}$ is an admissible radial perturbation.

    By direct computation and Lemma $\ref{Continuous Steiner symmetrization}$ $(3)$, for each $t\in[0,1]$ we have
    \begin{align*}
    \widetilde{V}_{i+1}(K_t)=\frac{1}{n}\int_{\s}\rho^{i+1}_{K_t}(u)\thinspace du=\frac{1}{n}\int_{\s}\rho_{S^t_u \langle K^{\frac{i+1}{n}}\rangle}^n(u)\thinspace du=V_n(S^t_u \langle K^{\frac{i+1}{n}}\rangle)=V_n(\langle K^{\frac{i+1}{n}}\rangle),
\end{align*}
which yields that $\widetilde{V}_{i+1}(K_t)=V_n(\langle K^{\frac{i+1}{n}}\rangle)=\widetilde{V}_{i+1}(K)$, $t\in[0,1]$. In particular,
\begin{align*}
    \frac{d\widetilde{V}_{i+1}(K_t)}{dt}\Big|_{0^+}=\frac{d\widetilde{V}_{i+1}(K)}{dt}\Big|_{0^+}=0.
\end{align*}

In addition, if $I_i^2K=c K$ for some $c>0$,  by Theorem $\ref{Euler}$ it follows that
\begin{align*}
    \frac{d\widetilde{V}_{i+1}(I_i K_t)}{dt}\Big|_{0^+}=0.
\end{align*}
This completes the proof.
\end{proof}

\subsection{New integral formulas for \texorpdfstring{$\widetilde{V}_{i+1}(I_iK)$}{}}\
\label{3.2}
\vskip 5pt

To analyze the behavior of  $\widetilde{V}_{i+1}(I_i K_t)$ of star body $K$, $i\in\{1,2,\ldots,n-2\}$, we derive two new integral formulas for $\widetilde{V}_{i+1}(I_iK)$,  which remain novel even if $K$ is convex.

     For $u\in \s$ and $\boldsymbol{y}=(y_1,\ldots,y_{i+1})\in (u^{\perp})^{i+1}$,  define
    \begin{align*}
        R_{\boldsymbol{y}}(K)=\{(s_1,\ldots,s_{i+1})\in \mathbb{R}^{i+1}: y_j+s_ju\in K,\thinspace j=1,2,\ldots,i+1\}
    \end{align*}
    and
    \begin{align*}
        H_{\boldsymbol{y}}(z)=[\varphi_{y_1}]_{z_1}\times[\varphi_{y_2}]_{z_2}\times\cdots\times[\varphi_{y_{i+1}}]_{z_{i+1}}\subseteq \mathbb{R}^{i+1},
    \end{align*}
    where $\varphi_{y_j}(t)=|y_j+tu|^{1-\frac{n}{i+1}}$, $ t\in\mathbb{R}$ and $z=(z_1,z_2,\ldots,z_{i+1})\in (\mathbb{R}_+)^{i+1}$. Note that  $[\varphi_{y_j}]_{z_j}$ is the level set of $\varphi_{y_j}$ at the height $z_j$. Keep in mind that  $y_j\in u^{\perp}$ and $z_j\in \mathbb{R}_+$.

    Recall that in Section \ref{2}, for $q=i+1$ with $i\in\{1,2,\ldots,n-2\}$,  $\Lambda: (\r)^{i+1}\to [0,\infty]$ is
    \begin{align*}
        \Lambda(x_1,\ldots,x_{i+1})=V_{i+1}(\conv\{o,x_1,\ldots,x_{i+1}\})^{-1},\quad (x_1,\ldots,x_{i+1})\in (\r)^{i+1}.
    \end{align*}
    Using the functional $\Lambda$, we define
    \begin{align*}
        \mathcal{V}_{i+1}(K):=b_{n,i}\int_{(\r)^{i+1}}\Lambda(x_1,\ldots,x_{i+1})\mathsmaller{\prod}\limits_{j=1}^{i+1}(|x_j|^{1-\frac{n}{i+1}}1_{\langle K^{\frac{i+1}{n}}\rangle}(x_j))\thinspace dx_1\cdots dx_{i+1},
    \end{align*}
    and
    \begin{align*}
        \mathcal{V}_{i+1,u}(K):=b_{n,i}\int_{(P_{u^{\perp}}\langle K^{\frac{i+1}{n}}\rangle)^{i+1}}\int_{\mathbb{R}_+}\int_{(\mathbb{R}_+)^{i+1}}V_{i+1}\big(R_{\boldsymbol{y}}(\langle K^{\frac{i+1}{n}}\rangle)\cap [\Lambda_{u,\boldsymbol{y}}]_{\alpha}\cap H_{\boldsymbol{y}}(z)\big)\thinspace dz d\alpha d\boldsymbol{y}.
    \end{align*}
    Here, $b_{n,i}=\frac{\omega_{n-i-1}}{(i+1)!}\big(\frac{in}{(i+1)(n-1)}\big)^{i+1}$ and recall  that $\Lambda_{u,\boldsymbol{y}}:\mathbb{R}^{i+1}\to[0,\infty]$ is defined by
 \begin{align*}
     \Lambda_{u,\boldsymbol{y}}(s_1,\ldots,s_{i+1})=V_{i+1}(\conv\{o,y_1+s_1u,\ldots,y_{i+1}+s_{i+1}u\})^{-1}, \quad (s_1,\ldots,s_{i+1})\in \mathbb{R}^{i+1}.
 \end{align*}

\begin{thm}
    \label{reformulation}
    If $K$ is a star body in $\r$, then $\widetilde{V}_{i+1}(I_iK)=\mathcal{V}_{i+1}(K)=\mathcal{V}_{i+1,u}(K).$
\end{thm}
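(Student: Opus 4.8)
The plan is to prove the two equalities $\widetilde{V}_{i+1}(I_iK)=\mathcal{V}_{i+1}(K)$ and $\mathcal{V}_{i+1}(K)=\mathcal{V}_{i+1,u}(K)$ in turn, the first by the Blaschke-Petkantschin formula and the second by Fubini-type slicing.

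For the first equality, I would start from the definition
\[
\widetilde{V}_{i+1}(I_iK)=\frac1n\int_{\s}\rho_{I_iK}^{i+1}(u)\,du=\frac1n\Big(\frac{1}{n-1}\Big)^{i+1}\int_{\s}\Big(\int_{\s\cap u^{\perp}}\rho_K^i(\theta)\,d\theta\Big)^{i+1}du,
\]
then expand the $(i+1)$-th power as an $(i+1)$-fold integral over $(\s\cap u^{\perp})^{i+1}$. The key move is to pass from the spherical integral $\int_{\s\cap u^{\perp}}\rho_K^i(\theta)\,d\theta$ to an integral over $u^\perp$ by introducing polar coordinates in the hyperplane $u^\perp\cong\mathbb R^{n-1}$: writing $x_j=r_j\theta_j$, a function on $\s\cap u^\perp$ lifts to a homogeneous function on $u^\perp$, and the weight $|x_j|^{1-\frac{n}{i+1}}$ together with the $r_j^{n-2}\,dr_j$ Jacobian is exactly what is needed to make $\rho_K^i(\theta_j)$ become $1_{\langle K^{(i+1)/n}\rangle}(x_j)$ after integrating in $r_j$ and raising to the appropriate power — this is the point of the power-body construction, since $x\in\langle K^{(i+1)/n}\rangle$ iff $|x|\le\rho_K^{(i+1)/n}(x/|x|)$. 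After this reduction one has an integral of the form $\int_\s\int_{(u^\perp)^{i+1}}F(x_1,\ldots,x_{i+1})\,dx_1\cdots dx_{i+1}\,du$ with $F$ incorporating the product of indicator-weights, to which the particular case $q=i+1$, $k=n-1$ of Theorem \ref{oushi} — recorded as identity \eqref{formula} — applies directly, producing the factor $V_{i+1}(\conv\{o,x_1,\ldots,x_{i+1}\})^{-1}=\Lambda(x_1,\ldots,x_{i+1})$ and the constant $\frac{n\omega_{n-q}}{q!}$. Collecting the constants $\frac1n\big(\frac1{n-1}\big)^{i+1}$, the Jacobian powers, and $\frac{n\omega_{n-i-1}}{(i+1)!}$ must reproduce $b_{n,i}=\frac{\omega_{n-i-1}}{(i+1)!}\big(\frac{in}{(i+1)(n-1)}\big)^{i+1}$; I expect the constant bookkeeping here — tracking how the power $i$ on $\rho_K$, the exponent $1-\frac{n}{i+1}$, and the radial Jacobian conspire — to be the main technical obstacle, though it is purely mechanical once the substitution is set up correctly.

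For the second equality $\mathcal{V}_{i+1}(K)=\mathcal{V}_{i+1,u}(K)$, I would fix any $u\in\s$ and decompose $\mathbb R^n=u^\perp\oplus L_u$, writing each $x_j=y_j+s_ju$ with $y_j\in u^\perp$, $s_j\in\mathbb R$. By Fubini the integral defining $\mathcal V_{i+1}(K)$ becomes
\[
b_{n,i}\int_{(u^\perp)^{i+1}}\Big(\int_{\mathbb R^{i+1}}\Lambda_{u,\boldsymbol y}(s_1,\ldots,s_{i+1})\mathsmaller{\prod}_{j=1}^{i+1}\big(\varphi_{y_j}(s_j)\,1_{\langle K^{(i+1)/n}\rangle}(y_j+s_ju)\big)\,ds_1\cdots ds_{i+1}\Big)d\boldsymbol y,
\]
noting that $\boldsymbol y$ ranges effectively over $(P_{u^\perp}\langle K^{(i+1)/n}\rangle)^{i+1}$ since otherwise some fiber indicator vanishes. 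Then for the inner integral I would use the layer-cake (Cavalieri) principle: $\Lambda_{u,\boldsymbol y}(\mathbf s)=\int_0^\infty 1_{[\Lambda_{u,\boldsymbol y}]_\alpha}(\mathbf s)\,d\alpha$ and $\prod_j\varphi_{y_j}(s_j)=\int_{(\mathbb R_+)^{i+1}}\prod_j 1_{[\varphi_{y_j}]_{z_j}}(s_j)\,dz$, and also $1_{\langle K^{(i+1)/n}\rangle}(y_j+s_ju)=1$ precisely when $(s_1,\ldots,s_{i+1})\in R_{\boldsymbol y}(\langle K^{(i+1)/n}\rangle)$. Swapping the order of integration (all integrands are nonnegative, so Tonelli applies), the $ds$-integral collapses to $V_{i+1}\big(R_{\boldsymbol y}(\langle K^{(i+1)/n}\rangle)\cap[\Lambda_{u,\boldsymbol y}]_\alpha\cap H_{\boldsymbol y}(z)\big)$, which is exactly the integrand of $\mathcal V_{i+1,u}(K)$. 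Since this holds for every $u$, we get $\mathcal V_{i+1}(K)=\mathcal V_{i+1,u}(K)$ for all $u\in\s$. This half is essentially routine once one recognizes that the three factors $\Lambda$, the radial weight product, and the body-indicator each get converted to an intersection with a level/fiber set; the only care needed is justifying the interchanges of integration and verifying that $H_{\boldsymbol y}(z)$ is measurable, which follows since each $\varphi_{y_j}$ is continuous away from the point $-\,(y_j\cdot\text{stuff})$ and its level sets are measurable.
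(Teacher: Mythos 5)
Your proposal is correct and follows essentially the same route as the paper: the polar-coordinate rewriting in $u^\perp$ that converts $\rho_K^i(\theta)$ into the weighted indicator $|x|^{1-\frac{n}{i+1}}\,1_{\langle K^{(i+1)/n}\rangle}(x)$, followed by the $q=i+1$, $k=n-1$ Blaschke--Petkantschin identity \eqref{formula} for the first equality, and the Fubini plus layer-cake decomposition of $\Lambda_{u,\boldsymbol y}$ and each $\varphi_{y_j}$ for the second. The only cosmetic difference is that the paper performs the polar substitution before expanding the $(i+1)$-th power, whereas you expand first; the constant bookkeeping you flag does indeed assemble into $b_{n,i}$.
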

\begin{proof}
     Putting $F(x_1,\ldots, x_{i+1})=\Pi_{j=1}^{i+1}(|x_j|^{1-\frac{n}{i+1}}1_{\langle K^{\frac{i+1}{n}}\rangle}(x_j))$ into equality $\eqref{formula}$, we have
    \begin{align*}
        &\enspace\quad\widetilde{V}_{i+1}(I_iK)=\frac{1}{n}\int_{\s}\big(\frac{1}{n-1}\int_{\s\cap u^{\perp}}\rho^{i}_{K}(\theta)\thinspace d\theta\big)^{i+1}\thinspace d    u
    \\
    &=\frac{1}{n}\int_{\s}\big(\frac{1}{n-1}\int_{\s\cap u^{\perp}}\rho^{\frac{in}{i+1}}_{ \langle K^{\frac{i+1}{n}}\rangle}(\theta)\thinspace d\theta\big)^{i+1}du
    \\
    &=\frac{1}{n}\int_{\s}\big(\frac{in}{(i+1)(n-1)}\int_{\s\cap u^{\perp}}\int_0^{\rho_{\langle K^{\frac{i+1}{n}}\rangle}(\theta)}r^{\frac{in}{i+1}-1}\thinspace drd\theta\big)^{i+1}\thinspace du
        \\
        &=\frac{1}{n}\big(\frac{in}{(i+1)(n-1)}\big)^{i+1}\int_{\s}\big(\int_{\langle K^{\frac{i+1}{n}}\rangle\cap u^{\perp}}|x|^{1-\frac{n}{i+1}}\thinspace dx\big)^{i+1}\thinspace du
        \\
        &=\frac{1}{n}\big(\frac{in}{(i+1)(n-1)}\big)^{i+1}\int_{\s}\int_{(u^{\perp})^{i+1}}\mathsmaller{\prod}\limits_{j=1}^{i+1}(|x_j|^{1-\frac{n}{i+1}}1_{\langle K^{\frac{i+1}{n}}\rangle}(x_j))\thinspace dx_1\cdots dx_{i+1}du
        \\
    &=\frac{1}{n}\big(\frac{in}{(i+1)(n-1)}\big)^{i+1}\frac{n\omega_{n-i-1}}{(i+1)!}\int_{(\r)^{i+1}}\frac{\prod_{j=1}^{i+1}(|x_j|^{1-\frac{n}{i+1}}1_{\langle K^{\frac{i+1}{n}}\rangle}(x_j))}{V_{i+1}(\conv\{o,x_1,\ldots,x_{i+1}\})}\thinspace dx_1\cdots dx_{i+1}
    \\
    &=b_{n,i}\int_{(\r)^{i+1}}\Lambda(x_1,\ldots,x_{i+1})\mathsmaller{\prod}\limits_{j=1}^{i+1}(|x_j|^{1-\frac{n}{i+1}}1_{\langle K^{\frac{i+1}{n}}\rangle}(x_j))\thinspace dx_1\cdots dx_{i+1}
    =\mathcal{V}_{i+1}(K).
\end{align*}

   Furthermore, by the Fubini theorem, we have
    \begin{align*}
        &\enspace\quad\mathcal{V}_{i+1}(K)=b_{n,i}\int_{(\langle K^{\frac{i+1}{n}}\rangle)^{i+1}}\Lambda(x_1,\ldots,x_{i+1})\mathsmaller{\prod}\limits_{j=1}^{i+1}(|x_j|^{1-\frac{n}{i+1}})\thinspace dx_1\cdots dx_{i+1}
        \\
        &=b_{n,i}\int_{(P_{u^{\perp}}\langle K^{\frac{i+1}{n}}\rangle)^{i+1}}\int_{R_{\boldsymbol{y}}(\langle K^{\frac{i+1}{n}}\rangle)} \Lambda_{u,\boldsymbol{y}}(s_1,\ldots,s_{i+1})\mathsmaller{\prod}\limits_{j=1}^{i+1}|y_j+s_ju|^{1-\frac{n}{i+1}}\thinspace ds_1\cdots ds_{i+1} d\boldsymbol{y}
        \\
        &=b_{n,i}\int_{(P_{u^{\perp}}\langle K^{\frac{i+1}{n}}\rangle)^{i+1}}\int_{R_{\boldsymbol{y}}(\langle K^{\frac{i+1}{n}}\rangle)}\Lambda_{u,\boldsymbol{y}}(s_1,\ldots,s_{i+1})\big(\mathsmaller{\prod}\limits_{j=1}^{i+1}\varphi_{y_j}(s_j)\big)\thinspace ds_1\cdots ds_{i+1} d\boldsymbol{y}
        \\
        &=b_{n,i}\int_{(P_{u^{\perp}}\langle K^{\frac{i+1}{n}}\rangle)^{i+1}}\int_{R_{\boldsymbol{y}}(\langle K^{\frac{i+1}{n}}\rangle)}\big(\int_{\mathbb{R}_+}1_{[\Lambda_{u,\boldsymbol{y}}]_{\alpha}}\thinspace d\alpha\cdot\big(\mathsmaller{\prod}\limits_{j=1}^{i+1}\int_{\mathbb{R}_+}1_{[\varphi_{y_j}]_{z_j}}\thinspace dz_j\big)\big)\thinspace ds_1\cdots ds_{i+1} d\boldsymbol{y}
        \\
        &=b_{n,i}\int_{(P_{u^{\perp}}\langle K^{\frac{i+1}{n}}\rangle)^{i+1}}\int_{\mathbb{R}_+}\int_{(\mathbb{R}_+)^{i+1}}V_{i+1}\big(R_{\boldsymbol{y}}(\langle K^{\frac{i+1}{n}}\rangle)\cap [\Lambda_{u,\boldsymbol{y}}]_{\alpha}\cap H_{\boldsymbol{y}}(z)\big)\thinspace dz d\alpha d\boldsymbol{y}
        \\
        &=\mathcal{V}_{i+1,u}(K).
    \end{align*}

    Hence, $\widetilde{V}_{i+1}(I_iK)=\mathcal{V}_{i+1}(K)=\mathcal{V}_{i+1,u}(K)$ for $i\in\{1,2,\ldots,n-2\}$.
     \end{proof}

\begin{rem}
    It is emphasized that this theorem applies only to $i\in\{1,2,\ldots,n-2\}$, due to the condition $i+1\leq n-1$ arising from the equality $\eqref{formula}$, which distinguishes  the case $i\in\{1,2,\ldots,n-2\}$ from the case $i=n-1$.
\end{rem}
Specifically, if $K$ is a Lipschitz star body, then for $u\in U(\langle K^{\frac{i+1}{n}}\rangle)$, $\langle K^{\frac{i+1}{n}}\rangle$ is a $u$-multi-graphical star body. Let $\Omega_{\infty}$ be the open
subset of $P_{u^{\perp}}\langle K^{\frac{i+1}{n}}\rangle$ given by Definition $\ref{multi}$.  Recall that $\mathcal{H}^{n-1}(P_{u^{\perp}}\langle K^{\frac{i+1}{n}}\rangle \backslash \Omega_{\infty}) = 0$. By Theorem $\ref{reformulation}$, it follows that
\begin{align}
\label{V}
    &\widetilde{V}_{i+1}(I_iK_t)=\mathcal{V}_{i+1,u}(K_t) \nonumber
    \\
    =\enspace &b_{n,i}\int_{(\Omega_{\infty})^{i+1}}\int_{\mathbb{R}_+}\int_{(\mathbb{R}_+)^{i+1}}V_{i+1}\big(R_{\boldsymbol{y}}(S_u^t\langle K^{\frac{i+1}{n}}\rangle)\cap [\Lambda_{u,\boldsymbol{y}}]_{\alpha}\cap H_{\boldsymbol{y}}(z)\big)\thinspace dz d\alpha d\boldsymbol{y}.
\end{align}

Note that for $\boldsymbol{y}\in (\Omega_{\infty})^{i+1}$, by Lemma $\ref{Continuous Steiner symmetrization}$ $(2)$ we have
 \begin{align*}
 R_{\boldsymbol{y}}(S_u^t\langle K^{\frac{i+1}{n}}\rangle)&=\{(s_1,\ldots,s_{i+1}):y_j+s_ju\in S_u^t\langle K^{\frac{i+1}{n}}\rangle,\thinspace j=1,\ldots, i+1 \}
 \\
 &=\{(s_1,\ldots,s_{i+1}):y_j+s_ju\in \mathring{S}_u^t\langle K^{\frac{i+1}{n}}\rangle,\thinspace j=1, \ldots, i+1\}
 \\
 &=S^t(\langle K^{\frac{i+1}{n}}\rangle\cap L^{y_1}_u)\times\cdots\times S^t(\langle K^{\frac{i+1}{n}}\rangle\cap L^{y_{i+1}}_u).
 \end{align*}
 That is, $R_{\boldsymbol{y}}(S_u^t\langle K^{\frac{i+1}{n}}\rangle)$ is a finite disjoint union of rectangles in $\mathbb{R}^{i+1}$. Throughout this article, a rectangle refers
to a compact axis-aligned rectangle with non-empty interior.

\subsection{Proof of Theorem \ref{1.1}}\
\label{3.3}
\vskip 5pt

\begin{proof}[\bf{Proof of Theorem $\ref{periodic}$}\thinspace\rm : ]
    If $K$ is an origin-symmetric ball, then it is clear that $I_i^2K = cK$ for some $c > 0$. Conversely, assume that $K$ is a star body in $\r$ with $n \geq 3$ such that $I_i^2K = cK$ for some $c>0$.

    Since $\rho_{I_iK} = \frac{1}{n-1} \R(\rho^{i}_K)$, it follows that
    \begin{align*}
        c\rho_{K}=\rho_{I_i^2 K}=\frac{1}{(n-1)^{i+1}}\R((\R(\rho_K^i))^i).
    \end{align*}
    So, $\rho_K$ is $C^{\infty}$-smooth by Theorem $\ref{regularity}$, and therefore $\rho_K$ is Lipschitz continuous. Hence,  $K$ is a Lipschitz star body, and so is $\langle K^{\frac{i+1}{n}}\rangle$. By Theorem $\ref{almost direction}$,  there exists a Lebesgue measurable set $U(\langle K^{\frac{i+1}{n}}\rangle) \subseteq \s$ of full measure such that $\langle K^{\frac{i+1}{n}}\rangle$ is $u$-multi-graphical for all $u\in U(\langle K^{\frac{i+1}{n}}\rangle)$.  Let $\Omega_{\infty}$ be the open subset of $P_{u^{\perp}}\langle K^{\frac{i+1}{n}}\rangle$ given by Definition \ref{multi}.

    In the following, we divide the proof into five steps.

    $\textbf{Step 1.}$ Prove that for any origin-symmetric convex set $A\subseteq \mathbb{R}^{i+1}$ and any rectangle $Q\subseteq \mathbb{R}^{i+1}$ with centroid $c(Q)$,  $t\mapsto V_{i+1}(A\cap (Q-tc(Q)))$
    is \emph{increasing} in $t\in[0,1]$.

    Indeed, since $A$ and $Q-c(Q)$ are origin-symmetric, it follows that the function
    \begin{align*}
       \mathbb{R}\ni t\mapsto V_{i+1}(A\cap (Q-c(Q)-tc(Q)))^{\frac{1}{i+1}}
    \end{align*}
    is even.  Also, it is concave on its support  by Lemma \ref{Brunn}. So, it is increasing on $(-\infty,0]$ and is decreasing on $[0,\infty)$. Thus,  $ t\mapsto V_{i+1}(A\cap (Q-tc(Q)))$ is increasing in $t\in[0,1]$.

    $\textbf{Step 2.}$ Let $u\in U(\langle K^{\frac{i+1}{n}}\rangle)$. Prove that for every $\alpha>0$, $\boldsymbol{y}\in (\Omega_{\infty})^{i+1}$  and $z\in (\mathbb{R}_+)^{i+1}$,
    \begin{align*}
         \phi (t; \alpha, \boldsymbol{y},  z):= V_{i+1}\big(R_{\boldsymbol{y}}(S_u^t\langle K^{\frac{i+1}{n}}\rangle)\cap [\Lambda_{u,\boldsymbol{y}}]_{\alpha}\cap H_{\boldsymbol{y}}(z)\big)
    \end{align*}
     is \emph{increasing} in $t\in[0,1]$. If so, for $\{K_t = \langle (S^t_{u} \langle K^{\frac{i+1}{n}}\rangle)^{\frac{n}{i+1}}\rangle\}_{t\in[0,1]}$, by Lemma \ref{again admissible}, together with the equality \eqref{V} and the Fatou lemma, we have
     \begin{align*}
        0=\frac{d\widetilde{V}_{i+1}(I_iK_t)}{dt}\Big|_{0^+}\geq b_{n,i}\int_{(\Omega_{\infty})^{i+1}}\int_{\mathbb{R}_+}\int_{(\mathbb{R}_+)^{i+1}}\underline{\frac{d}{dt}}\phi (t; \alpha, \boldsymbol{y},  z)\Big|_{0^+}\thinspace dz d\alpha d\boldsymbol{y}
       \geq 0,
    \end{align*}
    which implies that  for a.e. $\boldsymbol{y}\in (\Omega_{\infty})^{i+1}$, we have
    \begin{align*}
    \int_{\mathbb{R}_+}\int_{(\mathbb{R}_+)^{i+1}}\underline{\frac{d}{dt}}\phi (t; \alpha, \boldsymbol{y},  z)\Big|_{0^+}\thinspace dz d\alpha = 0.
    \end{align*}

     Indeed, for each $t \in [0, 1]$, recall that $ R_{\boldsymbol{y}}(S_u^t \langle K^{\frac{i+1}{n}}\rangle)$ is the disjoint union of finitely many rectangles
    $Q_k^t$ with centroid $c(Q_k^t)$. Let $0 = \tau_0 < \tau_1 < \cdots < \tau_N = 1$ denote the
    collision times of the $B_k^t$'s as they evolve in time. For each $j\in\{0,1,\ldots,N-1\}$,
    we verify that $\phi (t; \alpha, \boldsymbol{y},  z)$ is increasing in $t \in [\tau_j ,\tau_{j+1}]$.

    Note that for $t \in [\tau_j ,\tau_{j+1})$, each $Q_k^{\tau_j}$ evolves independently as $Q_k^t = Q_k^{\tau_j}-\frac{t-\tau_j}{1-\tau_j} c(Q_k^{\tau_j} )$. So,
    \begin{align*}
        \phi(t; \alpha, \boldsymbol{y},  z)=\mathsmaller{\sum}\limits_k\thinspace V_{i+1}\big((Q_k^{\tau_j}-\frac{t-\tau_j}{1-\tau_j} c(Q_k^{\tau_j} ))\cap [\Lambda_{u,\boldsymbol{y}}]_{\alpha}\cap H_{\boldsymbol{y}}(z)\big).
    \end{align*}
    Recall that $H_{\boldsymbol{y}}(z)=[\varphi_{y_1}]_{z_1}\times[\varphi_{y_2}]_{z_2}\times\cdots\times[\varphi_{y_{i+1}}]_{z_{i+1}}$ and $\varphi_{y_j}(t)=|y_j+tu|^{1-\frac{n}{i+1}}$, $ t\in\mathbb{R}$ for $j=1,2,\ldots,i+1$. Since $\varphi_{y_j}$ is even in $\mathbb{R}$ and is strictly decreasing in $[0,\infty)$,  it follows that $H_{\boldsymbol{y}}(z)$ is an origin-symmetric rectangle in $\mathbb{R}^{i+1}$.
    Combining the fact that $[\Lambda_{u,\boldsymbol{y}}]_{\alpha}$ is an origin-symmetric convex set by Lemma $\ref{convex}$,    it yields that $\phi (t; \alpha, \boldsymbol{y},  z)$ is increasing in $t \in [\tau_j ,\tau_{j+1}]$ by \mbox{Step 1}. Therefore, $\phi (t; \alpha, \boldsymbol{y},  z)$ is increasing in $t \in [0,1]$.

    $\textbf{Step 3.}$ Prove that for each given $\boldsymbol{y}\in (\Omega_{\infty})^{i+1}$, if $R_{\boldsymbol{y}}(\langle K^{\frac{i+1}{n}}\rangle)$ is \emph{not} an origin-symmetric rectangle in $\mathbb{R}^{i+1}$, then
    \begin{align*}
        \int_{\mathbb{R}_+}\int_{(\mathbb{R}_+)^{i+1}}\underline{\frac{d}{dt}}\phi (t; \alpha, \boldsymbol{y},  z)\Big|_{0^+}\thinspace  dzd\alpha > 0.
    \end{align*}
    If so, from Step 2, it follows that $R_{\boldsymbol{y}}(\langle K^{\frac{i+1}{n}}\rangle)$ is an origin-symmetric rectangle for a.e. $\boldsymbol{y}\in (\Omega_{\infty})^{i+1}$. In light of that $\boldsymbol{y}\mapsto R_{\boldsymbol{y}}(\langle K^{\frac{i+1}{n}}\rangle)$ is continuous in $(\Omega_{\infty})^{i+1}$ by Lemma $\ref{Continuous Steiner symmetrization}$ $(1)$, it follows that $R_{\boldsymbol{y}}(\langle K^{\frac{i+1}{n}}\rangle)$ is an origin-symmetric \mbox{rectangle} for all $\boldsymbol{y}\in (\Omega_{\infty})^{i+1}$. Consequently, for each $y\in \Omega_{\infty}$, $\langle K^{\frac{i+1}{n}}\rangle\cap L^y_u$ is a line segment symmetric with respect to $u^{\perp}$. Added with Lemma \ref{Continuous Steiner symmetrization} $(5)$, it follows that $\langle K^{\frac{i+1}{n}}\rangle=S_{u}^{0} \langle K^{\frac{i+1}{n}}\rangle =\operatorname{cl}\big(\mathring{S}_{u}^{0} K\big)$ is symmetric with respect to $u^{\perp}$ for all $u\in U(\langle K^{\frac{i+1}{n}}\rangle)$.

    Indeed, $R_{\boldsymbol{y}}(\langle K^{\frac{i+1}{n}}\rangle)$ is the disjoint union of finitely many rectangles
    $Q_k^0$. So,
    \begin{align*}
        \underline{\frac{d}{dt}}\phi (t; \alpha, \boldsymbol{y},  z)\Big|_{0^+}\geq \mathsmaller{\sum}\limits_k \thinspace\underline{\frac{d}{dt}}V_{i+1}((Q_k^0-tc(Q_k^0))\cap [\Lambda_{u,\boldsymbol{y}}]_{\alpha}\cap H_{\boldsymbol{y}}(z))\Big|_{0^+}.
    \end{align*}
    By Step 2, we know that $V_{i+1}((Q_k^0-tc(Q_k^0))\cap [\Lambda_{u,\boldsymbol{y}}]_{\alpha}\cap H_{\boldsymbol{y}}(z))$ is increasing in $t\in[0,1]$. So, each term involved in the summation above is non-negative. Thus, it suffices to show that there exist an index $k_0$, an open set  $O_1\subseteq (\mathbb{R}_+)^{i+1}$ and an open set $O_2\subseteq \mathbb{R}_+$ such that for any  $z\in O_1$ and $\alpha\in O_2$,
    \begin{align*}
        \underline{\frac{d}{dt}}V_{i+1}((Q_{k_0}^0-tc(Q_{k_0}^0))\cap [\Lambda_{u,\boldsymbol{y}}]_{\alpha}\cap H_{\boldsymbol{y}}(z))\Big|_{0^+}>0.
    \end{align*}

    By the assumption that $R_{\boldsymbol{y}}(\langle K^{\frac{i+1}{n}}\rangle)$ is not an origin-symmetric rectangle,  there exists an index $k_0$ such that $Q_{k_0}^0$ is not origin-symmetric. For convenience, write
    \begin{align*}
        Q_{k_0}^0=\mathsmaller{\prod}\limits_{j=1}^{i+1}[c_j-l_j,c_j+l_j],\quad l_j> 0.
    \end{align*}
    Then  there exists a $j\in\{1,2,\ldots,i+1\}$ so that $c_j\neq 0$. Say, $c_1\neq  0$. Assume $c_1>0$.

    Let
    \begin{align*}
        O_1=\{(z_1,z_2,\ldots,z_{i+1}): \enspace &\varphi_{y_1}(c_1+l_1)<z_1<\varphi_{y_1}(|c_1-l_1|),
        \\
        &\varphi_{y_j}(2M)<z_j<\varphi_{y_j}(M),\enspace  j=2,3,\ldots,i+1\},
    \end{align*}
    where $M\geq 2(|c_j|+l_j)$, for $j=1,2\ldots,i+1$. Then $O_1$ is an open subset of $(\mathbb{R}_+)^{i+1}$.

    For any $z\in O_1$, write the rectangle $H_{\boldsymbol{y}}(z)=[\varphi_{y_1}]_{z_1}\times[\varphi_{y_2}]_{z_2}\times\cdots\times[\varphi_{y_{i+1}}]_{z_{i+1}}$ as $[-h_1,h_1]\times [-h_2,h_2]\times\cdots\times [-h_{i+1},h_{i+1}]$. In light of that $\varphi_{y_j}=|y_j+tu|^{1-\frac{n}{i+1}}$, $ t\in\mathbb{R}$, is even in $\mathbb{R}$ and is strictly decreasing in $[0,\infty)$, it follows that
    \begin{align*}
        &|c_1-l_1|<h_1< c_1+l_1,\quad\text{and}\quad M< h_{j}< 2M,\quad j=2,3,\ldots,i+1.
    \end{align*}

    In particular, $c_1-l_1<h_1< c_1+l_1$ and $l_1-c_1<h_1$, i.e., $-h_1< c_1-l_1<h_1<c_1+l_1$. These chosen $h_1,\ldots,h_{i+1}$ ensure that the intersection $Q_{k_0}^0\cap H_{\boldsymbol{y}}(z)$ meets the boundary of $H_{\boldsymbol{y}}(z)$ \emph{solely} on the facet $\{h_1\}\times [-h_2,h_2]\times\cdots\times [-h_{i+1},h_{i+1}]$.  See the following \mbox{Figure 1.}

    \begin{figure}[ht]
\centering
\begin{tikzpicture}[scale=1.05]
\definecolor{wde}{RGB}{21, 102, 192}
    \def\h{1.5}    
    \def\c{1.35}      
    \def\l{1}    
    \def\H{5}    
    \def\L{0.95}    

    \pgfmathsetmacro{\interbottom}{\c-\l}
    \pgfmathsetmacro{\intertop}{\h}
    \pgfmathsetmacro{\interleft}{1.2-\L}
    \pgfmathsetmacro{\interright}{1.2+\L}

    \draw[wde,  fill=wde!10] (-\H,-\h) rectangle (\H,\h);
     \node[black, anchor=south west, xshift=0pt, yshift=-22pt, scale=1.1] at (-\H, \h) {$H_{\boldsymbol{y}}(z)$};

    \draw[wde, thick, fill=wde!6] (1.2-\L, \c-\l) rectangle (1.2+\L, \c+\l);
    \node[black, anchor=south west, xshift=0pt, yshift=-22pt, scale=1] at (1.2-\L, \c+\l) {$Q_{k_0}^0$};
    \filldraw (1.2,\c-0.1) circle (1pt) node[black, scale=0.8 ,below right, inner sep=0pt, xshift=2pt, yshift=0pt] {$c(Q_{k_0}^0)$};
    \draw[wde, very thick] (1.2-\L, \h) -- (1.2+\L, \h);
    \fill[wde, opacity=0.2] (\interleft, \interbottom) rectangle (\interright, \intertop);

    \draw[dashed, wde] (-\H, \c-\l) -- (\H, \c-\l);
    \draw[dashed, wde] (-\H, \c+\l) -- (\H, \c+\l);

    \node[right, black, font=\small] at (\H, -\h) {$-h_1$};
    \node[right, black, font=\small] at (\H, \h) {$h_1$};
    \node[right,black, font=\small] at (\H, \c-\l) {$c_1-l_1$};
    \node[right, black, font=\small] at (\H, \c+\l) {$c_1+l_1$};

    \filldraw (0,0) circle (1pt) node[inner sep=0pt, xshift=2pt, yshift=-1pt,scale=1 ,below right] {$o$};
\end{tikzpicture}
\caption{Rectangles $H_{\boldsymbol{y}}(z)$ and $Q_{k_0}^0$.}
\end{figure}

   Recall that $[\Lambda_{u,\boldsymbol{y}}]_{\alpha}=\{(s_1,\ldots,s_{i+1}):V_{i+1}(\conv\{o,y_1+s_1u,\ldots,y_{i+1}+s_{i+1}u\})\leq \frac{1}{\alpha}\}$, $\alpha\in[0,\infty].$
    So, $\lim\limits_{\alpha\to 0^+}[\Lambda_{u,\boldsymbol{y}}]_{\alpha}=\mathbb{R}^{i+1}$, and there exists an open set $O_2\subseteq\mathbb{R}_+$ such that
    \begin{align*}
[\Lambda_{u,\boldsymbol{y}}]_{\alpha}\supseteq[-10M,10M]^{i+1},\quad \forall\thinspace \alpha\in O_2.
    \end{align*}

    Consequently, for any  $z\in O_1$, $\alpha\in O_2$ and $t\in[0,1]$, we have
    \begin{align*}
        V_{i+1}\big((Q_{k_0}^0-tc(Q_{k_0}^0))\cap [\Lambda_{u,\boldsymbol{y}}]_{\alpha}\cap H_{\boldsymbol{y}}(z)\big)=V_{i+1}\big((Q_{k_0}^0-tc(Q_{k_0}^0))\cap H_{\boldsymbol{y}}(z)\big),
    \end{align*}
    and therefore
    \begin{align*}
        &\quad\enspace\underline{\frac{d}{dt}}V_{i+1}((Q_{k_0}^0-tc(Q_{k_0}^0))\cap [\Lambda_{u,\boldsymbol{y}}]_{\alpha}\cap H_{\boldsymbol{y}}(z))\Big|_{0^+}=\underline{\frac{d}{dt}}V_{i+1}((Q_{k_0}^0-tc(Q_{k_0}^0))\cap H_{\boldsymbol{y}}(z))\Big|_{0^+}
        \\
        &=\frac{d}{dt}V_{i+1}((Q_{k_0}^0-tc(Q_{k_0}^0))\cap H_{\boldsymbol{y}}(z))\Big|_{0^+}=c_1l_2\cdots l_{i+1}>0.
    \end{align*}

    $\textbf{Step 4.}$ Prove that $\langle K^{\frac{i+1}{n}}\rangle$ is an origin-symmetric ball, and therefore $K$ is an origin-symmetric ball as desired.

    Indeed, fix \mbox{$u_0\in \s$} and let $r_u(x)=x-2(x\cdot u)u$, $x\in\r$ for $u\in\s$. From Step 3, we note that for all $u\in U(\langle K^{\frac{i+1}{n}}\rangle)$, $\langle K^{\frac{i+1}{n}}\rangle$ is symmetric with respect to $u^{\perp}$, i.e., $\langle K^{\frac{i+1}{n}}\rangle=r_u(\langle K^{\frac{i+1}{n}}\rangle)$. Thus,
    \begin{align*}
        \rho_{\langle K^{\frac{i+1}{n}}\rangle}(r_{u}(u_0))=\rho_{r_u(\langle K^{\frac{i+1}{n}}\rangle)}(r_{u}(u_0))=\rho_{\langle K^{\frac{i+1}{n}}\rangle}(u_0),\quad \forall\thinspace u\in  U(\langle K^{\frac{i+1}{n}}\rangle).
    \end{align*}
 In light of that \mbox{$\mathcal{H}^{n-1}(\s\backslash U(\langle K^{\frac{i+1}{n}}\rangle))=0$} and $\rho_{\langle K^{\frac{i+1}{n}}\rangle}$ is continuous on $\s$, we have
    \begin{align*}
        \rho_{\langle K^{\frac{i+1}{n}}\rangle}(r_{u}(u_0))=\rho_{\langle K^{\frac{i+1}{n}}\rangle}(u_0), \quad\forall \thinspace u\in\s.
    \end{align*}
    Thus, $\rho_{\langle K^{\frac{i+1}{n}}\rangle}$ is constant on $\s$, i.e., $\langle K^{\frac{i+1}{n}}\rangle$ is an origin-symmetric ball.

    Combining the above four steps, we finish the proof that $I_i^2K = cK$ for some $c > 0$ if and only if $K$ is an origin-symmetric ball.

    $\textbf{Step 5.}$ Prove that  $I_iK = cK$ for some $c > 0$ iff $K$ is an origin-symmetric ball.

    If $K = B_n(r)$, then it is clear that
\begin{align*}
    I_iK = r^{i}I_iB_n = r^{i}\omega_{n-1}B_n =r^{i-1}\omega_{n-1}K.
\end{align*}
Conversely, if $I_iK = cK$ for some $c > 0$, then $I_i^2K = I_i(cK) = c^{i+1}K$, and therefore $K$ is an origin-symmetric ball.
\end{proof}

\begin{rem}
    Theorem \ref{periodic}  remains valid when $K$ is a star-shaped bounded Borel set such that $I_i^2 K = cK$ or $I_iK = cK$ holds up to an $\mathcal{H}^{n}$-null set. In such cases,  $\rho_K(u)\in L^{\infty}(\s)$ and  $\R(\R(\rho_K^i)^i)=\tilde{c}\rho_K$ holds up to an $\mathcal{H}^{n-1}$-null set for some $\tilde{c}>0$. Then, by Theorem $\ref{regularity}$, after modifying $\rho_K$ on an $\mathcal{H}^{n-1}$-null set (and thus modifying $K$ on an $\mathcal{H}^n$-null set), $\rho_K\in C^{\infty}(\s)$ is either identically zero or strictly positive. Therefore, the modified $K$ is a Lipschitz star body, and the proof proceeds as above.
\end{rem}

\section{\bf Generalized Busemann intersection inequalities}
\label{4}
\vskip 10pt

In this part, we finish the proof of Theorem \ref{isoperimetric}. To prove Theorem \ref{isoperimetric}, we use the following theorem, which is precisely Theorem 5.1 in \cite{Lin2}.

\begin{thm}
\label{convergence}
    Let $K$ be a Lipschitz star body in $\r$ and $U(K)$ be the set given by Theorem \ref{almost direction} applied to $K$. Then there exists $\{u_j\}_{j=1}^{\infty}\subseteq \s$ such that $u_1\in U(K)$, $u_{j+1}\in U(K_j)$ for $j\geq 1$ and  $\max\limits_{u\in\s}|\rho_{K_j}(u)-\rho_{B_K}(u)|\to 0$ as $ j\to\infty,$ where $K_j=S^1_{u_j}\cdots S^1_{u_1} K$ and $B_K$ denotes the origin-symmetric ball having the same volume as $K$.
\end{thm}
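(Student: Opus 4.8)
Throughout, write $r$-energy for the dual volume $\Phi(L):=\widetilde V_{n+2}(L)=\frac1n\int_{\s}\rho_L^{\,n+2}(v)\,dv=\frac{n+2}{n}\int_L|x|^2\,dx$, which will play the role of the second moment in the classical ``iterated symmetrization converges to the ball'' argument, now run for Lipschitz star bodies and the continuous Steiner symmetrization of \cite{MSY,Lin2}. The first observation is that at the terminal time $t=1$ the continuous Steiner symmetrization reduces fiberwise to the classical Steiner symmetral with respect to $u^{\perp}$: for $u\in U(L)$ and a.e.\ $y\in u^{\perp}$, the construction of Section \ref{2.2} recentres the finitely many subintervals of $L\cap L_u^{y}$ at the origin, so $S_u^1L\cap L_u^{y}$ is the origin-symmetric interval of the same length $\mathcal H^1(L\cap L_u^{y})$. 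Decomposing $|x|^2=|P_{u^{\perp}}x|^2+(x\cdot u)^2$ and using Fubini over the fibers, the $P_{u^{\perp}}$-part of $\Phi$ is unchanged by $S_u^1$ (each fiber keeps its length), while the $(x\cdot u)^2$-part can only decrease, since among subsets of a line of prescribed one-dimensional measure the centred interval uniquely minimises $\int s^2\,ds$ (bathtub principle). Hence $\Phi(S_u^1L)\le\Phi(L)$ for every $u\in U(L)$, with equality \emph{iff} $L$ is symmetric with respect to $u^{\perp}$. Moreover, since centred balls are fixed by Steiner symmetrisation and the operation is monotone under inclusion, $B_n(\min_{\s}\rho_K)\subseteq K_j\subseteq B_n(\max_{\s}\rho_K)$ for all iterates $K_j$; $V_n(K_j)=V_n(K)$ by Lemma \ref{Continuous Steiner symmetrization}\,(3); and $K_j$ is again a Lipschitz star body by Lemma \ref{Continuous Steiner symmetrization}\,(4), so $U(K_j)$ has full measure by Theorem \ref{almost direction}. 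Finally, the bathtub principle in $\r$ shows that $\Phi$ attains its minimum over star bodies of volume $V_n(K)$ \emph{exactly} at $B_K$.

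Next I would choose the directions greedily. Given $K_j$ (a Lipschitz star body), set $\delta_j:=\sup_{v\in U(K_j)}\bigl(\Phi(K_j)-\Phi(S_v^1K_j)\bigr)\ge0$ and pick $u_{j+1}\in U(K_j)$ with $\Phi(K_j)-\Phi(S^1_{u_{j+1}}K_j)\ge\delta_j/2$, putting $K_{j+1}=S^1_{u_{j+1}}K_j$. Then $\Phi(K_j)$ is non-increasing and bounded below by $\Phi(B_K)$, hence convergent, so the telescoping per-step gains tend to $0$ and therefore $\delta_j\to0$.

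The identification of the limit is where the Lipschitz hypothesis is essential. Using that $S_u^1$ sends a Lipschitz star body to one with a controlled Lipschitz constant (this is the quantitative content behind Lemma \ref{Continuous Steiner symmetrization}\,(4), and is part of the analysis of \cite{Lin2}), the family $\{K_j\}$ is equi-Lipschitz and uniformly bounded, hence relatively compact in the radial (uniform) metric; one also needs, again from \cite{Lin2}, that for a.e.\ fixed $v$ the maps $L\mapsto\Phi(S_v^1L)$ are continuous on this equi-Lipschitz family. Pass to a subsequence $K_{j_k}\to L_*$ uniformly in the radial function. Fix $v$ in the full-measure set of directions admissible for every $K_{j_k}$ and for $L_*$; from $\Phi(K_{j_k})-\Phi(S_v^1K_{j_k})\le\delta_{j_k}\to0$ and continuity we get $\Phi(S^1_vL_*)=\Phi(L_*)$, so by the equality case in the first paragraph $L_*$ is symmetric with respect to $v^{\perp}$ for a.e.\ $v\in\s$; since the corresponding reflections generate $O(n)$ and $\rho_{L_*}$ is continuous, $L_*$ is an origin-symmetric ball, and as $V_n(L_*)=V_n(K)$ we obtain $L_*=B_K$ and $\Phi(K_j)\downarrow\Phi(B_K)$. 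Now every subsequential radial-uniform limit $L$ of $\{K_j\}$ satisfies $\Phi(L)=\Phi(B_K)$ and $V_n(L)=V_n(K)$, hence $L=B_K$ by the uniqueness of the minimiser; since $\{K_j\}$ is relatively compact in the radial metric, the whole sequence converges, i.e.\ $\max_{u\in\s}\lvert\rho_{K_j}(u)-\rho_{B_K}(u)\rvert\to0$.

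The main obstacle is exactly the compactness/continuity step: for general (non-convex) star bodies, Steiner symmetrisation need not depend continuously on the body and may degrade regularity, so neither the relative compactness of $\{K_j\}$ in the radial metric nor the continuity of $L\mapsto\Phi(S^1_vL)$ is automatic. This is why the statement is restricted to Lipschitz star bodies and built on the $u$-multi-graphical machinery, and one must quote from \cite{Lin2} that iterated continuous Steiner symmetrisation of a Lipschitz star body stays equi-Lipschitz (with constant controlled in terms of $K$) and behaves continuously. If one prefers to use only equi-boundedness, an alternative finish is to extract radial-$L^1$ convergence from the energy gap $\Phi(K_j)-\Phi(B_K)\to0$ and then upgrade to uniform convergence by squeezing $\rho_{K_j}$ between the monotone inradius $\min_{\s}\rho_{K_j}$ (non-decreasing under Steiner symmetrisation, since centred inscribed balls persist) and the monotone radial circumradius $\max_{\s}\rho_{K_j}$ (non-increasing), both of which must then converge to $\rho_{B_K}$.
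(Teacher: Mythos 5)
The paper does not prove Theorem \ref{convergence}; it explicitly states that the result \emph{is} Theorem 5.1 of Lin--Xi \cite{Lin2} and simply imports it. There is therefore no in-text argument to compare against, and the right question is whether your sketch stands on its own.

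Your proposal follows the classical Lyapunov route: the second moment $\Phi(L)=\widetilde V_{n+2}(L)=\frac{n+2}{n}\int_L|x|^2\,dx$ is non-increasing under $S^1_u$, with equality precisely when $L$ is already symmetric about $u^{\perp}$; directions are chosen greedily so the per-step gain tends to $0$; compactness plus uniqueness of the minimiser of $\Phi$ at fixed volume then forces uniform convergence to $B_K$. The elementary pieces are correct: $S^1_u$ reduces fiberwise to the Steiner symmetral for a $u$-multi-graphical body (the one-dimensional flow of Section \ref{2.2} terminates at the origin-centred interval of the same length); the bathtub principle on each fiber gives monotonicity of $\Phi$ and characterizes equality; monotonicity under inclusion and $V_n$-preservation give the sandwich $B_n(\min_{\s}\rho_K)\subseteq K_j\subseteq B_n(\max_{\s}\rho_K)$; and reflections $r_v$ with $v$ in a full-measure (hence dense) subset of $\s$ generate a dense subgroup of $O(n)$, so a continuous invariant radial function is constant.

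The genuine gap — which you correctly flag but cannot close from the paper's lemmas — is the compactness/continuity step. Lemma \ref{Continuous Steiner symmetrization}\,(4) only says $S_u^tK$ is again a Lipschitz star body; it gives no quantitative control on the Lipschitz constant, so ``$\{K_j\}$ is equi-Lipschitz'' is not available from what the paper provides, and without compactness in the radial uniform metric the subsequential-limit argument breaks down. Likewise, continuity of $L\mapsto\Phi(S^1_vL)$ along a uniformly convergent sequence of Lipschitz star bodies is a nontrivial stability statement for Steiner symmetrization on a non-convex class, and is exactly the sort of estimate Lin--Xi develop. So your argument is, as you yourself note, a reduction of Theorem \ref{convergence} to the key lemmas of \cite{Lin2} rather than an independent proof. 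That is in the spirit of the paper's own treatment (a citation), but be explicit that the energy/bathtub half is the easy part: the substance of the theorem is the equi-Lipschitz and continuity control, and a complete proof must either reprove those or cite them precisely.

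Two smaller points worth tightening if you flesh this out: (a) in passing to the limit you must work on the full-measure set $U(L_*)\cap\bigcap_k U(K_{j_k})$, which is legitimate since a countable intersection of full-measure sets has full measure, but it should be stated; (b) the assertion that the reflections ``generate $O(n)$'' should be ``generate a dense subgroup of $O(n)$,'' with continuity of $\rho_{L_*}$ then finishing the job — this is exactly the argument the paper uses in Step 4 of the proof of Theorem \ref{periodic}.
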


\begin{proof}[\bf{Proof of Theorem $\ref{isoperimetric}$}\thinspace\rm : ] For any star body $K$ in $\r$, let
\begin{align*}
    \mathcal{D}_{i+1}(K)=b_{n,i}\int_{(\r)^{i+1}}\frac{\prod_{j=1}^{i+1}(|x_j|^{1-\frac{n}{i+1}}1_{K}(x_j))}{V_{i+1}(\conv\{o,x_1,\ldots,x_{i+1}\})}\thinspace dx_1\cdots dx_{i+1}.
\end{align*}
From Theorem \ref{3.2}, it follows  that $\widetilde{V}_{i+1}(I_i K)=\mathcal{V}_{i+1}(K)=\mathcal{D}_{i+1}(\langle K^{\frac{i+1}{n}}\rangle)$. Together with the identity $\widetilde{V}_{i+1}(K)=V_n(\langle K^{\frac{i+1}{n}}\rangle)$, we have
 \begin{align*}
     \frac{\widetilde{V}_{i+1}(I_i K)}{\widetilde{V}_{i+1}(K)^i}=\frac{\mathcal{D}_{i+1}(\langle K^{\frac{i+1}{n}}\rangle)}{V_n(\langle K^{\frac{i+1}{n}}\rangle)^i}\quad\text{and}\quad \frac{\widetilde{V}_{i+1}(I_i B_n)}{\widetilde{V}_{i+1}(B_n)^i}=\frac{\mathcal{D}_{i+1}(\langle B_n^{\frac{i+1}{n}}\rangle)}{V_n(\langle B_n^{\frac{i+1}{n}}\rangle)^i}=\frac{\mathcal{D}_{i+1}(B_{\langle K^{\frac{i+1}{n}}\rangle})}{V_n(B_{\langle K^{\frac{i+1}{n}}\rangle})^i},
 \end{align*}
 where $B_{\langle K^{\frac{i+1}{n}}\rangle}$ is the origin-symmetric ball having the same volume as $\langle K^{\frac{i+1}{n}}\rangle$.

 Hence, for a Lipschitz star body $K$, to prove the inequality
\begin{align*}
    \frac{\widetilde{V}_{i+1}(I_i K)}{\widetilde{V}_{i+1}(K)^i}\leq \frac{\widetilde{V}_{i+1}(I_i B_n)}{\widetilde{V}_{i+1}(B_n)^i},
\end{align*}
  it suffices to prove the inequality $\mathcal{D}_{i+1}(\langle K^{\frac{i+1}{n}}\rangle)\leq \mathcal{D}_{i+1}(B_{\langle K^{\frac{i+1}{n}}\rangle}).$

     For $\{K_t = \langle (S^t_{u} \langle K^{\frac{i+1}{n}}\rangle)^{\frac{n}{i+1}}\rangle\}_{t\in[0,1]}$ with $u\in U(\langle K^{\frac{i+1}{n}}\rangle)$,  $\mathcal{D}_{i+1}(S_u^t\langle K^{\frac{i+1}{n}}\rangle)=\widetilde{V}_{i+1}(I_i K_t)$ is increasing in $t\in[0,1]$ as shown in Step 2 in the proof of Theorem \ref{periodic}. Thus, by Lemma \ref{Continuous Steiner symmetrization} $(5)$, it follows that
     \begin{align*}
         \mathcal{D}_{i+1}(\langle K^{\frac{i+1}{n}}\rangle)= \mathcal{D}_{i+1}(S_u^0\langle K^{\frac{i+1}{n}}\rangle)\leq \mathcal{D}_{i+1}(S_u^1\langle K^{\frac{i+1}{n}}\rangle),\quad\forall\thinspace u\in U(\langle K^{\frac{i+1}{n}}\rangle).
     \end{align*}
      Let $\{u_j\}_{j=1}^{\infty}$ be the sequence given by Theorem \ref{convergence} applied to $\langle K^{\frac{i+1}{n}}\rangle$. It follows that
     \begin{align*}
        \mathcal{D}_{i+1}(\langle K^{\frac{i+1}{n}}\rangle)\leq \lim_{j\to \infty} \mathcal{D}_{i+1}(S^1_{u_j}\cdots S^1_{u_1}\langle K^{\frac{i+1}{n}}\rangle)=\mathcal{D}_{i+1}(B_{\langle K^{\frac{i+1}{n}}\rangle}).
    \end{align*}

    Now, we prove equality conditions. It suffices to show that $\mathcal{D}_{i+1}(\langle K^{\frac{i+1}{n}}\rangle)= \mathcal{D}_{i+1}(B_{\langle K^{\frac{i+1}{n}}\rangle})$ if and only if $K$ is an origin-symmetric ball. If $K$ is an origin-symmetric ball, then it is clear that $\langle K^{\frac{i+1}{n}}\rangle=B_{\langle K^{\frac{i+1}{n}}\rangle}$ and the equality holds.

    Otherwise, assume that $K$ is not an origin-symmetric ball. We claim that there exists $u_o\in U(\langle K^{\frac{i+1}{n}}\rangle)$ so that $\underline{\frac{d}{dt}}\mathcal{D}_{i+1}(S^t_{u_0} \langle K^{\frac{i+1}{n}}\rangle)\big|_{0^+}>0.$
    If so, then there is $t_0\in (0,1]$ so that
    \begin{align*}
        \mathcal{D}_{i+1}(\langle K^{\frac{i+1}{n}}\rangle)<  \mathcal{D}_{i+1}(S^{t_0}_{u_0}\langle K^{\frac{i+1}{n}}\rangle)\leq\mathcal{D}_{i+1}(B_{\langle K^{\frac{i+1}{n}}\rangle}),
    \end{align*}
    which yields that $\mathcal{D}_{i+1}(\langle K^{\frac{i+1}{n}}\rangle)\neq \mathcal{D}_{i+1}(B_{\langle K^{\frac{i+1}{n}}\rangle})$.

    Indeed, for $u\in U(\langle K^{\frac{i+1}{n}}\rangle)$,  since $\mathcal{D}_{i+1}(S_u^t\langle K^{\frac{i+1}{n}}\rangle)$ is increasing in $t\in[0,1]$, it follows that $\underline{\frac{d}{dt}}\mathcal{D}_{i+1}(S_u^t\langle K^{\frac{i+1}{n}}\rangle)\big|_{0^+}\geq0$. If such $u_0$ does not exist, then
    \begin{align*}
        \underline{\frac{d}{dt}}\mathcal{V}_{i+1,u}(K_t)\Big|_{0^+}=\underline{\frac{d}{dt}}\mathcal{D}_{i+1}(S_u^t\langle K^{\frac{i+1}{n}}\rangle)\Big|_{0^+}=0, \quad \forall\thinspace u\in U(\langle K^{\frac{i+1}{n}}\rangle).
    \end{align*}
     Repeating the process of Steps 3 and 4 in the proof of Theorem \ref{periodic}, we conclude that $K$ is an origin-symmetric ball, which contradicts the assumption.
    \end{proof}

\end{CJK*}
\vskip20pt

\end{document}